\renewcommand*{\backrefalt}[4]{%
    \ifcase #1 \footnotesize{(Not cited.)}%
    \or        \footnotesize{(Cited on page~#2.)}%
    \else      \footnotesize{(Cited on pages~#2.)}%
    \fi}
\newcommand*{\addFileDependency}[1]{
  \typeout{(#1)}
  \@addtofilelist{#1}
  \IfFileExists{#1}{}{\typeout{No file #1.}}
}
\newcommand{\gamup}{\gamma_{\textrm{up}}}
\newcommand{\gamlow}{\gamma_{\textrm{low}}}
\newcommand{\usedim}{\ensuremath{d}}
\newcommand{\gammamin}{\ensuremath{\widetilde{\gamma}}}
\newcommand{\ind}{\ensuremath{\ell}}
\newcommand{\mle}{ \widehat\theta_n^{\text{MLE}}}
\newcommand{\fixedpointtheta}{\widehat\theta_\obs}
\newcommand{\cover}{N}
\newcommand{\kinit}{\nu}
\newcommand{\dndelta}{\omega}
\newcommand{\radii}{\mathcal{R}}
\newcommand{\event}{\mathcal{E}}
\newcommand{\dims}{\ensuremath{d}}
\newcommand{\samples}{\ensuremath{n}}
\newcommand{\obs}{\samples}
\newcommand{\real}{\ensuremath{\mathbb{R}}}
\newcommand{\realdim}{\ensuremath{\real^\dims}}
\newcommand{\smallthreshold}{\epsilon}
\newcommand{\powerr}{\alpha}
\newcommand{\smallradius}{\epsilon}
\newcommand{\noise}{\xi}
\newcommand{\order}[1]{\ensuremath{\mathcal{O}\parenth{#1}}}
\newcommand{\thetastar}{\ensuremath{\theta^*}}
\newcommand{\NORMAL}{\ensuremath{\mathcal{N}}}
\newcommand{\brackets}[1]{\left[ #1 \right]}
\newcommand{\parenth}[1]{\left( #1 \right)}
\newcommand{\braces}[1]{\left\{ #1 \right \}}
\newcommand{\abss}[1]{\left| #1 \right |}
\newcommand{\tp}{^\top}
\newcommand{\myvec}{u} 
\newcommand{\myvectwo}{v} 
\newcommand{\mymat}{B}
\newcommand{\scalar}{\alpha}
\newcommand{\numsteps}{t}
\newcommand{\totalnumsteps}{T}
\newcommand{\seqalpha}[1]{\alpha_{#1}}
\newcommand{\imax}{\ell_\smallthreshold}
\newcommand{\radem}{\varepsilon}
\newcommand{\Tone}{T\ensuremath{_{0}}}
\newcommand{\hidden}{Z}
\newcommand{\weights}{w}
\newcommand{\iter}{t}
\newcommand{\mupdate}{M}
\newcommand{\updateMat}{\ensuremath{\Gamma_{\theta}}}
\newcommand{\mymatTheta}{\ensuremath{F_{\theta}}}
\newcommand{\sech}{\text{sech}}
\newcommand{\Rspace}{\ensuremath{\mathbb{R}}}
\newcommand{\Ncal}{\ensuremath{\mathcal{N}}}
\newcommand{\gradient}{\ensuremath{\nabla}}
\newcommand{\ball}{\ensuremath{\mathbb{B}}}
\newcommand{\sphere}{\ensuremath{\mathbb{S}}}
\newcommand{\weight}{\ensuremath{\uppi}}
\newcommand{\contracasym}{\ensuremath{\rho}}
\newcommand{\Fishermatrix}{\ensuremath{\mathcal{I}}}
\newcommand{\normDensity}{\ensuremath{\phi}}
\newcommand{\mean}{\ensuremath{\theta}}
\newcommand{\sd}{\ensuremath{\sigma}}
\newcommand{\funQ}{\ensuremath{Q}}
\newcommand{\updateM}{\ensuremath{M}}
\newcommand{\updateMreg}{\ensuremath{\overline{M}}}
\newcommand{\weightFun}{\ensuremath{w}}
\newcommand{\mydefn}{\ensuremath{:=}}
\newcommand{\unicon}{c}
\newcommand{\uniconnew}{c'}
\newcommand{\uniconone}{c_1}
\newcommand{\unicontwo}{c_2}
\newcommand{\threshold}{\delta}
\newcommand{\defn}{:=}
\newcommand{\rdefn}{=:}
\newcommand{\etal}{{et al.}}
\newcommand{\matsnorm}[2]{|\!|\!| #1 | \! | \!|_{{#2}}}
\newcommand{\vecnorm}[2]{\left\| #1\right\|_{#2}}
\newcommand{\enorm}[1]{\vecnorm{#1}{2}} 
\newcommand{\opnorm}[1]{\ensuremath{\matsnorm{#1}{\tiny{\mbox{op}}}}}
\newcommand{\Exs}{\ensuremath{{\mathbb{E}}}}
\newcommand{\Prob}{\ensuremath{{\mathbb{P}}}}
\newcommand{\eig}[1]{\ensuremath{\lambda_{#1}}}
\newcommand{\eigmin}{\ensuremath{\eig{\min}}}
\newtheoremstyle{named}{}{}{\itshape}{}{\bfseries}{.}{.5em}{\thmnote{#3's }#1}
\theoremstyle{named}
\theoremstyle{plain}
\newtheorem{theorem}{Theorem}
\newtheorem{lemma}{Lemma}
\newtheorem{corollary}{Corollary}
\newlength{\widebarargwidth}
\newlength{\widebarargheight}
\newlength{\widebarargdepth}
\long\def\@makecaption#1#2{
        \vskip 0.8ex
        \setbox\@tempboxa\hbox{\small {\bf #1:} #2}
        \parindent 1.5em  
        \dimen0=\hsize
        \advance\dimen0 by -3em
        \ifdim \wd\@tempboxa >\dimen0
                \hbox to \hsize{
                        \parindent 0em
                        \hfil
                        \parbox{\dimen0}{\def\baselinestretch{0.96}\small
                                {\bf #1.} #2
                                }
                        \hfil}
        \else \hbox to \hsize{\hfil \box\@tempboxa \hfil}
        \fi
        }
\long\def\comment#1{}
\definecolor{battleshipgrey}{rgb}{0.52, 0.52, 0.51}
\definecolor{darkgray}{rgb}{0.66, 0.66, 0.66}
\definecolor{darkgreen}{rgb}{0.0, 0.2, 0.13}
\definecolor{darkspringgreen}{rgb}{0.09, 0.45, 0.27}
\definecolor{dukeblue}{rgb}{0.0, 0.0, 0.61}
\definecolor{olivedrab7}{rgb}{0.24, 0.2, 0.12}
\definecolor{darkblue}{rgb}{0.0, 0.0, 0.55}
\definecolor{darkscarlet}{rgb}{0.34, 0.01, 0.1}
\definecolor{candyapplered}{rgb}{1.0, 0.03, 0.0}
\definecolor{ao(english)}{rgb}{0.0, 0.5, 0.0}
\definecolor{applegreen}{rgb}{0.55, 0.71, 0.0}
\newcommand{\widgraph}[2]{\includegraphics[keepaspectratio,width=#1]{#2}}
\renewcommand{\tableofcontents}{%
  \@starttoc{toc}%
}
\begin{document}


\begin{frontmatter}

\title{Singularity, Misspecification, and the Convergence Rate of EM}
\runtitle{Singular Models and Slow convergence of EM}


\begin{aug}
\author{\fnms{Raaz} \snm{Dwivedi}\thanksref{t1,m1}\ead[label=e1]{raaz.rsk@berkeley.edu}},
\author{\fnms{Nhat} \snm{Ho}\thanksref{t1,m1}\ead[label=e2]{minhnhat@berkeley.edu}},
\author{\fnms{Koulik} \snm{Khamaru}\thanksref{t1,m1}\ead[label=e3]{koulik@berkeley.edu}},\\
\author{\fnms{Martin J.} \snm{Wainwright}\thanksref{t2,m1,m2}\ead[label=e4]
{wainwrig@berkeley.edu}},
\author{\fnms{Michael I.} \snm{Jordan}\thanksref{t3,m1}\ead[label=e5]
{jordan@cs.berkeley.edu}}
\and
\author{\fnms{Bin} \snm{Yu}\thanksref{t4,m1}
\ead[label=e6]{binyu@berkeley.edu}
\ead[label=u1,url]{http://www.foo.com}}

\thankstext{t1}{Raaz Dwivedi, Nhat Ho,
  and Koulik Khamaru contributed equally to this work.}
\thankstext{t2}{Supported by Office of Naval Research grant
DOD ONR-N00014-18-1-2640 and National Science Foundation grant
NSF-DMS-1612948}
\thankstext{t3}{Supported by Army Research Office grant
W911NF-17-1-0304}
\thankstext{t4}{Supported by National Science Foundation grant  
NSF-DMS-1613002}
\runauthor{Dwivedi, Ho, Khamaru, Wainwright, Jordan and Yu}

\affiliation{University of California, Berkeley\thanksmark{m1} and
  Voleon Group\thanksmark{m2} }

\address{Raaz Dwivedi\\
Department of Electrical Engineering \\
and Computer Sciences\\
\printead{e1}
}

\address{Nhat Ho\\
Department of Electrical Engineering \\
and Computer Sciences\\
\printead{e2}}

\address{Koulik Khamaru\\
Department of Statistics\\
\printead{e3}
}

\address{Martin J. Wainwright\\
Department of Statistics\\
Department of Electrical Engineering \\
and Computer Sciences\\
\printead{e4}
}

\address{Michael I. Jordan\\
Department of Statistics\\
Department of Electrical Engineering \\
and Computer Sciences\\
\printead{e5}}

\address{Bin Yu\\
Department of Statistics\\
Department of Electrical Engineering \\
and Computer Sciences\\
\printead{e6}}
\end{aug}

\begin{abstract}
A line of recent work has analyzed the behavior of the
Expectation-Maximization (EM) algorithm in the well-specified setting,
in which the population likelihood is locally strongly concave around
its maximizing argument.  Examples include suitably separated Gaussian
mixture models and mixtures of linear regressions.  We consider
over-specified settings in which the number of fitted components is
larger than the number of components in the true distribution. Such
mis-specified settings can lead to singularity in the Fisher
information matrix, and moreover, the maximum likelihood estimator
based on $n$ i.i.d. samples in $d$ dimensions can have a non-standard
$\mathcal{O}((d/n)^{\frac{1}{4}})$ rate of convergence.  Focusing on
the simple setting of two-component mixtures fit to a $d$-dimensional
Gaussian distribution, we study the behavior of the EM algorithm both
when the mixture weights are different (unbalanced case), and are
equal (balanced case). Our analysis reveals a sharp distinction
between these two cases: in the former, the EM algorithm converges
geometrically to a point at Euclidean distance of
$\mathcal{O}((d/n)^{\frac{1}{2}})$ from the true parameter, whereas in
the latter case, the convergence rate is exponentially slower, and the
fixed point has a much lower $\mathcal{O}((d/n)^{\frac{1}{4}})$
accuracy.  Analysis of this singular case requires the introduction of
some novel techniques: in particular, we make use of a careful form of
localization in the associated empirical process, and develop a
recursive argument to progressively sharpen the statistical rate.
\end{abstract}

\begin{keyword}[class=MSC]
\kwd[Primary ]{62F15}
\kwd{62G05}
\kwd[; secondary ]{62G20}
\end{keyword}

\begin{keyword}
\kwd{Mixture models}
\kwd{Expectation-maximization}
\kwd{Fisher information matrix}
\kwd{Empirical process}
\kwd{Non-asymptotic convergence guarantees}
\kwd{Localization argument}
\end{keyword}

\end{frontmatter}

\setcounter{secnumdepth}{3}
\setcounter{tocdepth}{1}
\newpage
\begin{center}
\textbf{Organization}
\end{center}
\tableofcontents


\section{Introduction} 
\label{sec:introduction}

The growth in the size and scope of modern data sets has presented the
field of statistics with a number of challenges, one of them being how
to deal with various forms of heterogeneity. Mixture models provide a
principled approach to modeling heterogeneous collections of data
(that are usually assumed i.i.d.).  In practice, it is frequently the
case that the number of mixture components in the fitted model does
not match the number of mixture components in the data-generating
mechanism. It is known that such mismatch can lead to substantially
slower convergence rates for the maximum likelihood estimate (MLE) for
the underlying parameters.  In contrast, relatively less attention has
been paid to the computational implications of this mismatch.  In
particular, the algorithm of choice for fitting finite mixture models
is the Expectation-Maximization (EM) algorithm, a general framework
that encompasses various types of divide-and-conquer computational
strategies. The goal of this paper is to gain a fundamental
understanding of the behavior of EM when used to fit over-specified mixture
models.


\paragraph{Statistical issues with over-specification} 
\label{par:statistical_issues_with_over_specification_}

While density estimation in finite mixture models is relatively well
understood~\cite{Vandegeer-2000, Ghosal-2001}, characterizing the
behavior of maximum likelhood for parameter estimation has remained
challenging. The main difficulty for analyzing the MLE in such
settings arises from label switching between the
mixtures~\cite{Green_JRSSB-97, Stephens-2002}, and lack of strong
concavity in the likelihood.  Such issues do not interfere with
density estimation, since the standard divergence measures like the
Kullback-Leibler and Hellinger distances remain invariant under
permutations of labels, and strong concavity is not required. An
important contribution to the understanding of parameter estimation in
finite mixture models was made by Chen~\cite{Chen1992}. He considered
a class of over-specified finite mixture models; here the term
``over-specified'' means that the model to be fit has more mixture
components than the distribution generating the data.  In an
interesting contrast to the usual $\obs^{-\frac{1}{2}}$ convergence
rate for the MLE based on $\obs$ samples, Chen showed that for
estimating scalar location parameters in a certain class of
over-specified finite mixture models, the corresponding rate slows
down to $\obs^{-\frac{1}{4}}$.  This theoretical result has practical
significance, since methods that over-specify the number of mixtures
are often more feasible than methods that first attempt to estimate
the number of components, and then estimate the parameters using the
estimated number of components~\cite{Rousseau-2011}. In subsequent
work, Nguyen~\cite{Nguyen-13} and Heinrich et al.~\cite{Jonas-2016}
have characterized the (minimax) convergence rates of parameter
estimation rates for mixture models in both exactly-fitted or
over-specified settings in terms of the Wasserstein distance.


\paragraph{Computational concerns with mixture models} 
\label{par:computational_concerns_with_mixture_models_}

While the papers discussed above address the statistical behavior of a
global maximum of the log-likelihood, they do not consider the
associated computational issues of obtaining such a maximum. In
general settings, non-convexity of the log-likelihood makes it
impossible to guarantee that the iterative algorithms used in practice
converge to the global optimum, or equivalently the MLE.  Perhaps the
most widely used algorithm for computing the MLE is the
expectation-maximization (EM) algorithm~\cite{Rubin-1977}. Early work
on the EM algorithm~\cite{Jeff_Wu-1983} showed that its iterates
converge asymptotically to a local maximum of the log-likelihood
function for a broad class of incomplete data models; this general
class includes the fitting of mixture models as a special case.  The
EM algorithm has also been studied in the specific setting of Gaussian
mixture models; here we find results both for the population EM
algorithm, which is the idealized version of EM based on an infinite
sample size, as well as the usual sample-based EM algorithm that is
used in practice.  For Gaussian mixture models, the population EM
algorithm is known to exhibit various convergence rates, ranging from
linear to super-linear (quasi-Newton like) convergence if the overlap
between the mixture components tends to zero~\cite{Jordan-1996,
  Jordan-2000}.  It has also been noted in several
papers~\cite{redner1984mixture,Jordan-2000} that the convergence of EM
can be prohibitively slow when the mixtures are not well separated.

\paragraph{Prior work on EM} 
\label{par:prior_work_on_em_}

Balakrishnan et al.~\cite{Siva_2017} laid out a general theoretical
framework for analysis of the EM algorithm, and in particular how to
prove non-asymptotic bounds on the Euclidean distance between
sample-based EM iterates and the true parameter.  When applied to the
special case of two-component Gaussian location mixtures, assumed to
be well-specified and suitably separated, their theory guarantees that
(1) population EM updates enjoy a geometric rate of convergence to the
true parameter when initialized in a sufficiently small neighborhood
around the truth, and (2) sample-based EM updates converge to an
estimate at Euclidean distance of order $(\dims/\obs)^{\frac{1}{2}}$,
based on $\obs$ i.i.d.\ draws from a finite mixture model in
$\realdim$.  Further work in this vein has characterized the behavior
of EM in a variety of settings for two Gaussian mixtures, including
convergence analysis with additional sparsity
constraints~\cite{HanLiu_nips2015,Caramanis-nips2015,Cheng_2018},
global convergence of population EM~\cite{Hsu-nips2016}, guarantees of
geometric convergence under less restrictive conditions on the two
mixture components~\cite{klusowski_2017, Daskalakis_colt2017},
analysis of EM with unknown mixture weights, means and covariances for
two mixtures~\cite{Cai_2018}, and the analysis of EM to more than two
Gaussian components~\cite{Sarkar_nips2017,Cheng_2018}. Other related
work has provided optimization-theoretic guarantees for EM by viewing
it in a generalized surrogate function
framework~\cite{kumar_nips2017}, and analyzed the statistical
properties of confidence intervals based on an EM
estimator~\cite{ychen_2018}.

An assumption common to all of this previous work is that there is no
misspecification in the fitting of the Gaussian mixtures; in
particular, it is assumed that the data is generated from a mixture
model with the same number of components as the fitted model.  A
portion of our recent work~\cite{Raaz-misspecified} has shown that EM
retains its fast convergence behavior---albeit to a biased
estimate---in \emph{under-specified} settings where the number of
components in the fitted model are less than that in the true model.
However, as noted above, in practice, it is most common to use
over-specified mixture models.  For these reasons, it is desirable to
understand how the EM algorithm behaves in the over-specified
settings.

\paragraph{Our contributions} 
\label{par:our_contributions}

The goal of this paper is to shed some light on the non-asymptotic
performance of the EM algorithm for over-specified mixtures.  We
provide a comprehensive study of over-specified mixture models when
fit to a particularly simple (non-mixture) data-generating mechanism;
a multivariate normal distribution $\Ncal(0, \sd^2I_\dims)$ in $\dims$
dimensions with known scale parameter $\sd > 0$.  This setting,
despite its simplicity, suffices to reveal some rather interesting
properties of EM in the over-specified context.  In particular, we
obtain the following results.
\begin{itemize}
  \item {\bf Two-mixture unbalanced fit:} For our first model class,
    we study a mixture of two location-Gaussian distributions with
    unknown location, known variance and known unequal weights for the two
    components. For this case, we establish that the population EM
    updates converge at a geometric rate to the true parameter; as an
    immediate consequence, the sample-based EM algorithm converges in
    $\order{\log (n/d)}$ steps to a ball of radius $(d/n)^{\frac{1}{2}}$. The
    fast convergence rate of EM under the unbalanced setting provides
    an antidote to the pessimistic belief that statistical estimators
    generically exhibit slow convergence for over-specified mixtures.
  \item {\bf Two-mixture balanced fit:} In the balanced version of the
    problem in which the mixture weights are equal to $\frac{1}{2}$
    for both components, we find that the EM algorithm behaves very
    differently.  Beginning with the population version of the EM
    algorithm, we show that it converges to the true parameter from an
    arbitrary initialization.  However, the rate of convergence varies
    as a function of the distance of the current iterate from the true
    parameter value, becoming exponentially slower as the iterates
    approach the true parameter.  This behavior is in sharp contrast
    to well-specified
    settings~\cite{Siva_2017,Daskalakis_colt2017,Sarkar_nips2017},
    where the population updates converge at a geometric rate.  We
    also show that our rates for population EM are tight.  By
    combining the slow convergence of population EM with a novel
    localization argument, one involving the empirical process
    restricted to an annulus, we show that the sample-based EM
    iterates converge to a ball of radius $(d/n)^{\frac{1}{4}}$ around
    the true parameter after $\mathcal{O}((n/d)^{\frac{1}{2}})$ steps.
    The $n^{-\frac{1}{4}}$ component of the Euclidean error matches
    known guarantees for the global maximum of the
    MLE~\cite{Chen1992}.  The localization argument in our analysis is
    of independent interest, because such techniques are not required
    in analyzing the EM algorithm in well-specified settings when the
    population updates are globally contractive.  We note that
    ball-based localization methods are known to be essential in deriving sharp
    statistical rates for M-estimators
    (e.g.,~\cite{Vandegeer-2000,Bar05,Kolt06}); to the best of our
    knowledge, the use of an annulus-based localization argument in analyzing
    an algorithm is novel.
\end{itemize}
Moreover, we show via extensive numerical experiments that the fast convergence
of EM for the unbalanced fit is a special case; and that the slow
behavior of EM proven for the balanced fit (in particular the rate of order
$\obs^{-\frac{1}{4}}$) arises in several general (including more than two components) over-specified Gaussian
mixtures with known variance, known or unknown weights, and unknown location parameters.

\paragraph{Organization} 
\label{par:organization}
The remainder of the paper is organized as follows. In
Section~\ref{sec:set_up} we provide illustrative
simulations of EM in different settings in order to motivate the settings analyzed later in the
paper. We then provide a thorough analysis of the convergence rates of EM
when over-fitting Gaussian data with two components in Section~\ref{sec:main_results}
and the key ideas of the novel proof techniques in Section~\ref{sec:new_techniques_for_analyzing_sample_em}.
We provide a thorough discussion of our results 
in Section~\ref{sec:discussion}, exploring their general applicability, and presenting further simulations that substantiate the value of our theoretical framework. Detailed proofs of our results and discussion
of certain additional technical aspects of our results are provided in the
appendix.
\paragraph{Notation} For any two sequences $a_{n}$ and $b_{n}$, the notation
\mbox{$a_{n} \precsim b_{n}$} or $a_n = \order{b_n}$ means that $a_{n} \leq
C b_{n}$ for some universal constant $C$. Similarly, the notation $a_{n} \asymp
b_{n}$ or $a_n= \Theta(b_n)$ denotes that both the conditions, $a_{n} \precsim
b_{n}$ and $b_{n} \precsim a_{n}$, hold.
Throughout this paper, $\weight$ denotes a variable and $\pi$ denotes the
mathematical constant ``pi''.

\paragraph{Experimental settings} 
\label{par:experimental_settings_}

We summarize a few common aspects of the numerical experiments
presented in the paper. Population-level computations were done using
numerical integration on a sufficiently fine grid.  With finite
samples, the stopping criteria for the convergence of EM were: (1) the
change in the iterates was small enough, or (2) the number of
iterations was too large (greater than $100,000$).  Experiments were
averaged over several repetitions (ranging from $25$ to $400$).  In
majority of the runs, for each case, criteria~(1) led to convergence.
In our plots for sample EM, we report $\widehat m_e + 2\widehat s_e$
on the y-axis, where $\widehat m_e, \widehat s_e$ respectively denote
the mean and standard deviation across the experiments for the metric
under consideration, e.g., the parameter estimation
error. Furthermore, whenever a slope is provided, it is the slope for
the least-squares fit on the log-log scale for the quantity on
$y$-axis when fitted with the quantity reported on the $x$-axis.  For
instance, in Figure~\ref{fig:snr_effect}(b), we plot
$\vert\widehat\theta_n-\thetastar \vert$ on the $y$-axis value versus
the sample size $n$ on the $x$-axis, averaged over $400$ experiments,
accounting for the deviation across these experiments.  Furthermore,
the green dotted line with legend $\weight=0.3$ and the corresponding
slope $-0.48$ denote the least-squares fit and the respective slope
for $\log \vert\widehat\theta_n -\thetastar \vert$ (green solid dots)
with $\log n$ for the experiments corresponding to the setting
$\weight = 0.3$.




\section{Motivating simulations and problem set-up}
\label{sec:set_up}

In this section, we explore a wide range of behavior demonstrated by
EM for certain settings of over-specified location Gaussian
mixtures.
We begin with several simulations that
illustrate fast and slow convergence of EM for various settings, and
serve as a motivation for the theoretical results derived later in the
paper. We provide basic background on EM in Section~\ref{sub:setup},
and describe the problems to be tackled.


\subsection{Problem set-up} 
\label{sub:problem_set_up}

Let $\normDensity(\cdot \:; \mu, \Sigma)$ denote the density of a
Gaussian random vector with mean $\mu$ and covariance $\Sigma$.
Consider the two component Gaussian mixture model with density
\begin{align}
  \label{EqnSpecial}
  f(x; \thetastar, \sd, \weight)
  & \defn \weight \normDensity(x;
\thetastar,\sd^2I_d) + (1-\weight) \normDensity(x ; - \thetastar,
\sd^2 I_d).
\end{align}
Given $n$ samples from the distribution~\eqref{EqnSpecial}, suppose
that we use the EM algorithm to fit a two-component location Gaussian mixture
with fixed weights and variance\footnote{Refer to
  Section~\ref{sec:discussion} for a discussion for the case of
  unknown weights and variances.}  and special structure on the
location parameters---more precisely, we fit the model with density
\begin{align}
  \label{EqModelFit}
  f(x; \theta, \sd, \weight) & \defn \weight \normDensity(x ; \theta,
  \sigma^2I_\dims) + (1 - \weight) \normDensity(x; -\theta,
  \sigma^2I_\dims)
\end{align}
using the EM algorithm, and take the solution\footnote{Strictly speaking,
 different initialization of EM may converge to different estimates. 
 For the settings analyzed theoretically in this work, the EM always 
 converges towards the same estimate in the limit of infinite steps, 
 and we use a stopping criterion to determine the final estimate. 
 See the discussion on experimental settings in 
 Section~\ref{sec:introduction} for more details.} as an estimate of
$\thetastar$. An important aspect of the problem at hand is the
signal strength, which is measured as the separation between the means
of mixture components relative to the spread in the components. For
the model~\eqref{EqnSpecial}, the signal strength is given by the
ratio $\enorm{\thetastar}/\sd$.  When this ratio is large, we refer to
it as the \emph{strong signal} case; otherwise, it corresponds to the \emph{weak
signal} case.  Of particular interest to us is the behavior of EM in
the limit of weak signal when there is no separation; i.e.,
$\enorm{\thetastar} = 0$.  For such cases, we call the
fit~\eqref{EqModelFit} an \emph{unbalanced} fit when $\weight \neq
\frac{1} {2}$ and a \emph{balanced} fit when $\weight=\frac{1}{2}$.
Note that the setting of $\thetastar=0$ corresponds to the simplest case of over-specified
fit, since the true model has just one component (standard normal
distribution irrespective of the parameter $\weight$) but the fitted
model has two (one extra) component (unless the EM estimate is also $0$). We now present the empirical
behavior of EM for these models and defer the derivation of EM updates
to Section~\ref{sub:setup}.


\subsection{Numerical Experiments: Fast to slow convergence of EM}
\label{sub:motivation_fast_to_slow_convergence_of_em}

We begin with a numerical study of the effect of separation among the
mixtures on the statistical behavior of the estimates returned by EM.
Our main observation is that weak or no separation leads to relatively
low accuracy estimates.  Additional simulations for more general
mixtures, including more than two components, are provided in
Section~\ref{sub:slow_rates_for_general_mixtures}.  Next, via
numerical integration on a grid with sufficiently small discretization
width, we simulate the behavior of the population EM algorithm
width---an idealized version of EM in the limit of infinite
samples---in order to understand the effect of signal strength on EM's
algorithmic rate of convergence, i.e., the number of steps needed for
population EM to converge to a desired accuracy. We observe a slow
down of EM on the algorithmic front when the signal strength
approaches zero.


\subsubsection{Effect of signal strength on sample EM} 
\label{ssub:effect_of_signal_strength_on_em}

In Figure~\ref{fig:snr_effect}, we show simulation results for data generated
from the model~\eqref{EqnSpecial} in dimension $d = 1$ and noise
variance $\sigma^2 = 1$, and for three different values of the weight
\mbox{$\weight \in \braces{0.1, 0.3, 0.5}$.}  In all cases, we fit a
two-location Gaussian mixture with fixed weights and variance as
specified by equation~\eqref{EqModelFit}.  The two panels show the
estimation error of the EM solution as a function of $n$ for two
distinct cases of the data-generating mechanism: (a) in the strong
signal case, we set $\thetastar=5$ so that the data has two
well-separated mixture components, and (b) to obtain the limiting case
of no signal, we set $\thetastar = 0$, so that the two mixture
components in the data-generating distribution collapse to one, and we
are simply fitting the data from a standard normal distribution.

In the strong signal case, it is well
known~\cite{Siva_2017,Daskalakis_colt2017} that EM solutions have
an estimation error (measured by the Euclidean distance between the EM estimate and the true parameter $\thetastar$)  that achieves the classical (parametric) rate
$n^{-\frac{1}{2}}$; the empirical results in Figure~\ref{fig:snr_effect}(a)
are simply a confirmation of these theoretical predictions.  More
interesting is the case of no signal (which is the limiting case with weak
signal), where the simulation results shown in panel (b) 
of Figure~\ref{fig:snr_effect} reveal a different
story. In this case, whereas the EM solution (with random standard normal
initialization)
has an error that decays as
$n^{-\frac{1}{2}}$ when $\weight \neq 1/2$, its error decays at the
considerably slower rate $n^{-\frac{1}{4}}$ when $\weight = 1/2$.  We
return to these cases in further detail in
Section~\ref{sec:main_results}.
\begin{figure}[h]
  \begin{center}
    \begin{tabular}{cc}
    \widgraph{0.45\textwidth}{fig/strong_signal} &
    \widgraph{0.45\textwidth}{fig/weak_signal} \\
    (a) $\thetastar = 5$ & (b) $\thetastar = 0$
    \end{tabular}
    \caption{Plots of the error $| \widehat \theta_n - \thetastar|$ in
      the EM solution versus the sample size $n$, focusing on the
      effect of signal strength on EM solution accuracy.  The true
      data distribution is given by $\weight\NORMAL(\thetastar, 1)+
      (1-\weight)\NORMAL(-\thetastar, 1)$ and we use EM to fit the
      model $\weight \NORMAL(\theta, 1)+
      (1-\weight)\NORMAL(-\theta,1)$, generating the EM estimate
      $\widehat\theta_n$ based on $n$ samples.  (a) When the signal is
      strong, the estimation rate decays at the parametric rate
      $n^{-\frac{1}{2}}$, as revealed by the $-1/2$ slope in a least-square
      fit of the log error based on the log sample size $\log n$. (b)
      When there is no signal ($\thetastar = 0$), then depending on
      the choice of weight $\weight$ in the fitted model, we observe
      two distinct scalings for the error: $n^{-\frac{1}{2}}$ when $\weight
      \neq 0.5$, and, $n^{-\frac{1}{4}}$ when $\weight=0.5$, again as
      revealed by least-squares fits of the log error using the log
      sample size $\log n$.  }
\label{fig:snr_effect}
  \end{center}
\end{figure}

\subsubsection{Interesting behavior of population EM}

The intriguing behavior of the sample EM algorithm in the ``no
signal'' case motivated us to examine the behavior of population EM
for this case.  To be clear, while sample EM is the practical
algorithm that can actually be applied, it can be insightful for
theoretical purposes to first analyze the convergence of the
population EM updates, and then leverage these findings to understand
the behavior of sample EM~\cite{Siva_2017}. Our analysis follows a
similar road-map.  Interestingly, for the case with $\thetastar=0$,
the population EM algorithm behaves significantly differently for the
unbalanced fit ($\weight \neq \frac{1}{2}$) as compared to the
balanced fit ($\weight= \frac{1}{2}$) (equation~\eqref{EqModelFit}).
In Figure~\ref{fig:population_em}, we plot the distance of the
population EM iterate $\theta^t$ to the true parameter value,
$\theta^* = 0$, on the vertical axis, versus the iteration number $t$
on the horizontal axis.  With the vertical axis on a log scale, a
geometric convergence rate of the algorithm shows up as a negatively
sloped line (disregarding transient effects in the first few
iterations).

\begin{figure}[h]
  \begin{center}
    \begin{tabular}{cc}
    \widgraph{0.45\textwidth}{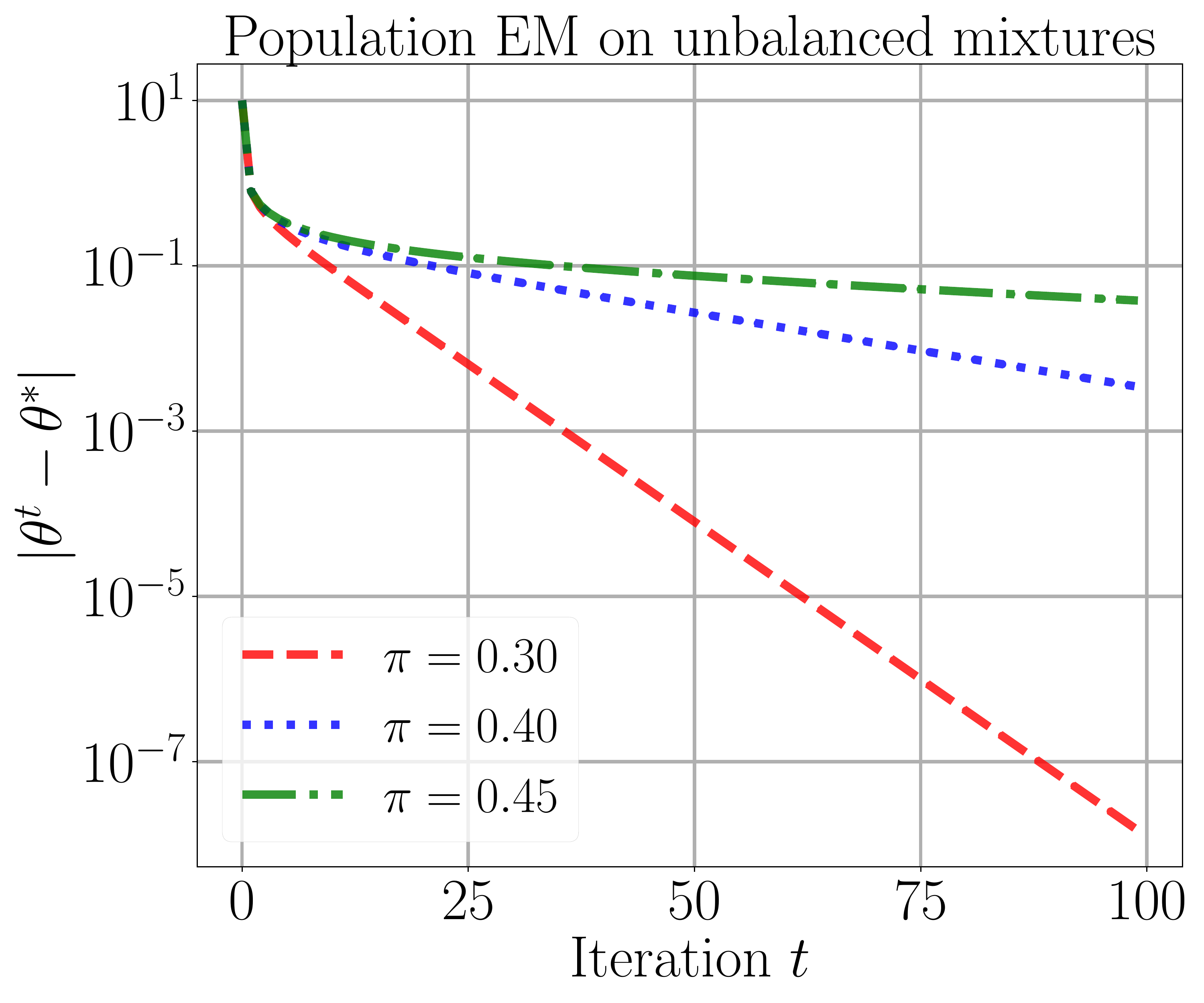} 
      & \widgraph{0.45\textwidth}{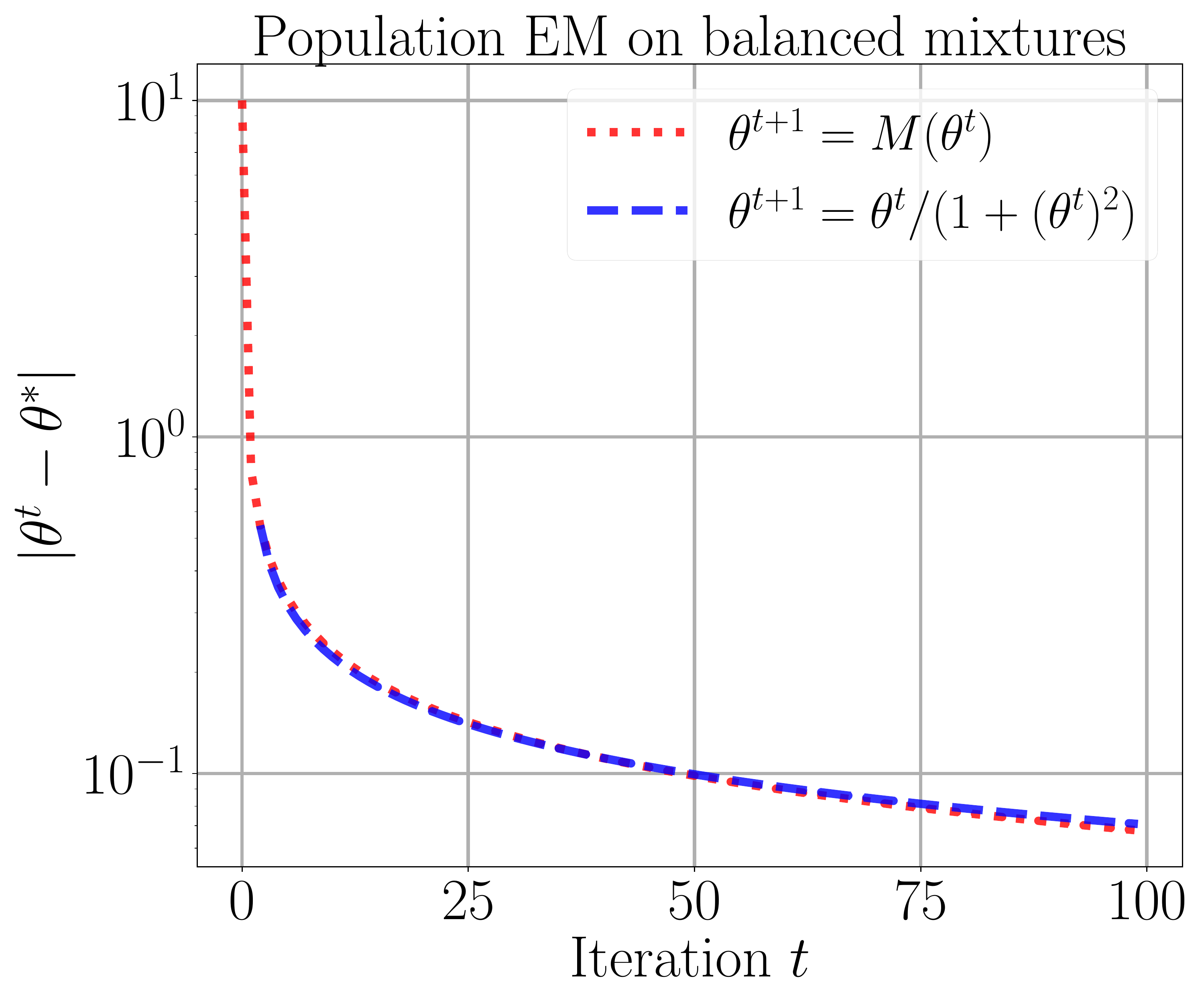} \\
    (a) & (b)
    \end{tabular}
\caption{ Behavior of the (numerically computed) population EM
  updates~\eqref{EqnPopMupdate} when the underlying data distribution
  is $\NORMAL(0, 1)$.  (a) Unbalanced mixture fits~\eqref{EqModelFit}
  with weights~$(\weight, 1-\weight)$: We observe geometric
  convergence towards $\thetastar=0$ for all $\weight \neq 0.5$
  although the rate of convergence gets slower as $\weight\to 0.5$.
  (b) Balanced mixture fits~\eqref{EqModelFit} with weights~$(0.5,
  0.5)$: We observe two phases of convergence. First, EM quickly
  converges to ball of constant radius and then it exhibits slow
  convergence towards $\thetastar=0$.  Indeed, we see that during the
  slow convergence, the population EM updates track the curve given by
  $\theta^{t+1} = \theta^t/(1+(\theta^t)^2)$ very closely, as
  predicted by our theory.}
\label{fig:population_em}
  \end{center}
\end{figure}

For the unbalanced mixtures in panel (a), we see that EM converges
geometrically quickly, although the rate of convergence (corresponding
to the slope of the line) tends towards zero as the mixture weight
$\weight$ tends towards $1/2$ from below.  For $\weight = 1/2$, we obtain a
balanced mixture, and, as shown in the plot in panel (b), the
convergence rate is now sub-geometric.  In fact, the behavior of the
iterates is extremely well characterized by the recursion $\theta
\mapsto \frac{\theta}{1 + \theta^2}$.

The theory to follow provides a precise characterization of the
behavior seen in Figures~\ref{fig:snr_effect}(b)
and~\ref{fig:population_em}. Furthermore, in
Section~\ref{sec:discussion}, we provide further support for
relevannce of our theoretical results in explaining the behavior of EM
for other classes of over-specified models, including Gaussian mixture
models with unknown weights as well as mixtures of linear regressions.

\subsection{EM updates for the model fit~\eqref{EqModelFit}}
\label{sub:setup}

In this section, we provide a quick introduction to the EM
updates. Readers familiar with the literature can skip directly to the
main results in Section~\ref{sec:main_results}.  Recall that the two-component model fit is based on the density
\begin{align}
\label{EqModelFitNew}
    \weight \normDensity(x  ;\theta, \sd^2 I_\dims) + (1-\weight)
    \normDensity(x ; -\theta, \sd^2I_\dims).
\end{align}
From now on we assume that the data is drawn from the zero-mean
Gaussian distribution $\NORMAL(0, \sd^2I_d)$.  Note that the model fit
described above contains the true model with $\thetastar=0$ and it is
referred to as an over-specified fit since for any non-zero $\theta$,
the fitted model has two components.

The maximum likelihood estimate is obtained by solving the following
optimization problem
\begin{align}
    \mle \in \arg \max_{\theta \in \Theta} \frac{1}{n} \sum_{i=1}^n
    \left \{ \log(\weight \normDensity(x_i;\theta,\sd^2I_{\dims}) +
    (1-\weight)\normDensity(x_i;-\theta,\sd^2I_{\dims}) ) \right \}.
\label{eq:sample_likelihood}
\end{align}
In general, there is no closed-form expression for $\mle$.
The EM algorithm circumvents this problem via a
minorization-maximization scheme. Indeed, population EM is a surrogate
method to compute the maximizer of the population log-likelihood
\begin{align}
\label{eq:pop_likelihood}
    \mathcal{L}(\theta) := \Exs_{X} \brackets{ \log (\weight
      \normDensity(X;\theta,\sd^2I_{\dims}) +
      (1-\weight)\normDensity(X;-\theta,\sd^2I_{\dims})},
\end{align}
where the expectation is taken over the true distribution.
On the other hand, sample EM attempts to estimate $\mle$.
We now describe the expressions for both the sample and population EM updates
for the model-fit~\eqref{EqModelFitNew}.

Given any point $\theta$, the EM algorithm proceeds in two steps: (1)
compute a surrogate function $Q(\cdot; \theta)$ such that $Q(\theta';
\theta) \leq \mathcal{L}(\theta')$ and $Q(\theta; \theta) =
\mathcal{L}(\theta)$; and (2) compute the maximizer of $Q(\theta';
\theta)$ with respect to $\theta'$.  These steps are referred to as
the E-step and the M-step, respectively.  In the case of two-component
location Gaussian mixtures, it is useful to describe a hidden variable
representation of the mixture model.  Consider a binary indicator
variable $\hidden \in \left\{0,1\right\}$ with the marginal
distribution \mbox{$\Prob(Z = 1) = \weight$} and \mbox{$\Prob(Z = 0) =
  1 - \weight$,} and define the conditional distributions
\begin{align*}
  \parenth{X \mid Z= 0} \sim \Ncal(-\theta, \sd^{2}I_{d}), 
  \quad\text{and} \quad 
  \parenth{X \mid Z=1} \sim \Ncal(\theta,\sd^{2}I_{d}).
\end{align*}
These marginal and conditional distributions define a joint distribution
over the pair $(X, Z)$, and by construction, the induced marginal distribution
over $X$ is a Gaussian mixture of the form~\eqref{EqModelFitNew}.
For EM, we first compute the conditional probability of $Z = 1$ given $X
= x$: \begin{align}
  \label{EqnWeightFun}
  \weightFun_{\theta}(x) = 
  \weightFun^\weight_{\theta}(x) \mydefn
  \frac{\weight \exp
    \parenth{-\frac{\enorm{\theta - x}^2}{2\sd^2}}}{\weight \exp
    \parenth{-\frac{\enorm{\theta - x}^2}{2\sd^2}} + (1-\weight) \exp
    \parenth{-\frac{\enorm{\theta + x}^2}{2\sd^2}}}.
\end{align}
Then, given a vector $\theta$, the E-step in the population EM
algorithm involves computing the minorization function $\theta'
\mapsto Q(\theta', \theta)$. Doing so is equivalent to computing
  the expectation
\begin{align}
\label{EqnEmMinor}
\funQ(\theta'; \theta) & = -\frac{1}{2} \Exs \brackets
     {\weightFun_{\theta}(X)\enorm{X - \theta'}^2 + (1 -
       \weightFun_{\theta}(X))\enorm{X + \theta'}^2 },
\end{align}
where the expectation is taken over the true distribution (here
$\NORMAL (0, \sd^2I_\dims)$.  In the M-step, we maximize the function
\mbox{$\theta' \mapsto \funQ(\theta'; \theta)$.}  Doing so defines a
mapping $\updateM: \Rspace^{d} \to \Rspace^{d}$, known as the
\emph{population EM operator}, given by
\begin{align}
\label{EqnPopMupdate}
\updateM(\theta) &= \arg \max_{\theta' \in \Rspace^\usedim}
\funQ(\theta', \theta) = \Exs \Big[ (2\weightFun_{\theta}(X)-1) X
  \Big].
\end{align}
In this definition, the second equality follows by computing the
gradient $\nabla_{\theta'} \funQ$, and setting it to zero.  In
summary, for the two-component location mixtures considered in this
paper, the population EM algorithm is defined by the sequence
\mbox{$\theta^{t+1} = M(\theta^t)$}, where the operator $M$ is defined
in equation~\eqref{EqnPopMupdate}.

We obtain the \emph{sample EM update} by simply replacing
the expectation $\Exs$ in equations~\eqref{EqnEmMinor}
and~\eqref{EqnPopMupdate} by the empirical average based on an
observed set of samples.  In particular, given a set of i.i.d.\ samples
$\braces{X_i}_{i=1}^\obs$, the sample EM operator $\updateM_\obs
:\realdim \mapsto \realdim$ takes the form
\begin{align}
\label{eq:sample_em_operator}
\updateM_\obs(\theta) & \defn \frac{1}{\obs} \sum_{i=1}^n (2
\weightFun_\theta(X_i) - 1) X_i.
\end{align}
Overall, the sample EM algorithm generates the sequence of iterates
given by $\theta^{t+1} = M_n(\theta^t)$.

In the sequel, we study the convergence of EM both for the population EM
algorithm in which the updates are given by 
\mbox{$\theta^{t+1} = M(\theta^t)$}, and the sample-based EM sequence given
by \mbox{$\theta^{t+1} = M_n(\theta^t)$}. 
With this notation in place, we now turn to the main results of this paper.


\section{Main results}
\label{sec:main_results}

In this section, we state our main results for the convergence rates
of the EM updates under the unbalanced and balanced mixture fit. We start
with the easier case of unbalanced mixture fit in
Section~\ref{sub:unbalanced_mixtures} followed by the more delicate
(and interesting) case of the balanced fit in
Section~\ref{ssub:population_em_for_balanced_mixtures}.


\subsection{Behavior of EM for unbalanced mixtures}
\label{sub:unbalanced_mixtures}
We begin with a characterization of both the population and
sample-based EM updates in the setting of unbalanced mixtures.
In particular, we assume that the fitted two-components
mixture model~\eqref{EqModelFitNew} has known weights $\weight$ and
$1-\weight$, where $\weight \in (0, 1/2)$.
The following result characterizes the behavior 
of the EM updates for this set-up.
\begin{theorem} 
\label{ThmUnbalanced}
Suppose that we fit an unbalanced instance (i.e., $\weight \neq
\frac{1}{2}$) of the mixture model~\eqref{EqModelFitNew} to
$\NORMAL(0, \sd^2 I_d)$ data.  Then:
\begin{enumerate}
  \item[(a)] The population EM operator~\eqref{EqnPopMupdate} is
    globally strictly contractive, meaning that
\begin{subequations}
\begin{align}
\label{EqnUnbalancedPop}  
  \enorm{\updateM(\theta)} & \leq \parenth{1 - \contracasym^2/2}
  \enorm{\theta} \qquad \mbox{for all $\theta \in \real^\usedim$,}
\end{align}
where $\contracasym \defn |1 - 2 \weight| \in (0, 1)$.
\item[(b)] There are universal constants $c, c'$ such that given any
  $\delta \in (0, 1)$ and a sample size $\obs \geq \unicon
  \frac{\sd^2} {\contracasym^4} \, (\dims + \log(1/\delta)) $, the
  sample EM sequence \mbox{$\theta^{t+1} = \updateM_\obs(\theta^t)$}
  generated by the update~\eqref{eq:sample_em_operator} satisfies the
  upper bound
\begin{align}
  \label{EqnUnbalancedSample}
\enorm{\theta^t} \leq   \enorm{\theta^0}\parenth{ 1 -
    \frac{\contracasym^2}{2}}^t + \frac{
    \uniconnew(\enorm{\theta^0}\sd^2 + \contracasym \sigma) }{\contracasym^2} \sqrt{
    \frac{\dims
      + \log(1/\delta)}{\obs}}\;,
\end{align}
\end{subequations}
with probability at least $1-\delta$.
\end{enumerate}
\end{theorem}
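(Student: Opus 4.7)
For part~(a), my strategy is to first reduce $\updateM(\theta)$ to a positive scalar multiple of $\theta$ using Gaussian integration by parts, and then establish the required scalar bound by a short one-dimensional estimate. Stein's identity $\Exs[X_i f(X)] = \sd^2 \Exs[\partial_{x_i} f(X)]$, applied with $f(x) = 2\weightFun_\theta(x) - 1$ and the chain-rule computation $\partial_{x_i}\weightFun_\theta(x) = (2\theta_i/\sd^2)\,\weightFun_\theta(x)(1-\weightFun_\theta(x))$, immediately yields the compact identity
\begin{align*}
\updateM(\theta) \;=\; 4\theta\,\Exs[\weightFun_\theta(X)(1-\weightFun_\theta(X))] \;=\; \theta\bigl(1 - \Exs[(2\weightFun_\theta(X)-1)^2]\bigr),
\end{align*}
so that~\eqref{EqnUnbalancedPop} is equivalent to the lower bound $\Exs[(2\weightFun_\theta(X)-1)^2] \geq \contracasym^2/2$. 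To prove this I would use rotational invariance of the Gaussian to reduce to the univariate case, and then exploit the algebraic identity $\weight\,e^{t}+(1-\weight)\,e^{-t} = \sqrt{1-\contracasym^2}\,\cosh(t-c^*)$ with $c^* := \tfrac{1}{2}\log\tfrac{1-\weight}{\weight}$ (so $\tanh(c^*)=\contracasym$) to obtain the clean reparameterization $2\weightFun_\theta(X)-1 = \tanh(sZ-c^*)$ where $Z\sim\NORMAL(0,1)$ and $s := \|\theta\|/\sd$. Since $\tanh^2$ is strictly increasing in $|\cdot|$, the event $\{\tanh^2(sZ-c^*)\geq\contracasym^2\}$ coincides with $\{|sZ-c^*|\geq c^*\} = \{sZ\leq 0\}\cup\{sZ\geq 2c^*\}$; these two events are disjoint, and the first alone has probability $\tfrac{1}{2}$ by Gaussian symmetry, which immediately yields $\Exs[\tanh^2(sZ-c^*)]\geq \contracasym^2/2$.

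For part~(b), the plan is the classical contraction-plus-perturbation bootstrap. From part~(a) and the triangle inequality, writing $\updateM_\obs(\theta^t) = \updateM(\theta^t) + [\updateM_\obs(\theta^t) - \updateM(\theta^t)]$ produces the one-step bound
\begin{align*}
\|\theta^{t+1}\| \;\leq\; (1 - \contracasym^2/2)\,\|\theta^t\| + \sup_{\|\theta\|\leq R}\|\updateM_\obs(\theta) - \updateM(\theta)\|,
\end{align*}
valid for any $R$ containing all iterates. Iterating and summing the geometric series, then choosing $R = \|\theta^0\|$ and invoking an induction (which uses the stated sample-size condition to confirm that the iterates never escape $\{\|\theta\|\leq R\}$), reproduces~\eqref{EqnUnbalancedSample} provided the uniform supremum is controlled by $c'(\|\theta^0\|\sd^2 + \contracasym\sd)\sqrt{(\dims+\log(1/\threshold))/\obs}$ with probability at least $1-\threshold$.

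The main obstacle is establishing this uniform empirical-process bound. Pointwise concentration at fixed $\theta$ is routine: $(2\weightFun_\theta(X_i)-1)X_i$ is a sub-Gaussian vector at scale $\sd$, giving the $\sd\sqrt{(\dims+\log(1/\threshold))/\obs}$ contribution. Passing to the uniform bound over $\{\|\theta\|\leq R\}$ relies on a Euclidean $\epsilon$-net combined with the Lipschitz estimate $\|\nabla_\theta[(2\weightFun_\theta(x)-1)x]\|_{\mathrm{op}} \leq \|x\|^2/\sd^2$ (inherited from the derivative calculation used in part~(a)). The delicate point is that this Lipschitz constant $\|X\|^2/\sd^2$ is itself random, so achieving the claimed form without extraneous logarithmic factors in $\obs$ requires controlling $\tfrac{1}{\obs}\sum_i\|X_i\|^2$ around $\dims\sd^2$ via a separate high-probability envelope rather than by a crude union bound over the net. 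The remainder---optimizing the net radius, propagating the one-step bound, and verifying the induction invariant---is routine once the uniform bound is in hand.
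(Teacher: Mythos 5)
Your part~(a) is correct and takes a genuinely different route from the paper. The paper writes $\updateM(\theta) = \updateM(\theta) - \updateM(0)$, Taylor-expands along the segment $\theta_u = u\theta$, and bounds the operator norm of the resulting matrix $\Exs[\Gamma_{\theta_u}(X)]$ coordinate-by-coordinate after a rotation; your Stein-identity computation instead produces the exact scalar identity $\updateM(\theta) = \theta\,(1 - \Exs[(2\weightFun_\theta(X)-1)^2])$ and the clean reparameterization $2\weightFun_\theta(X)-1 = \tanh(sZ - c^*)$, reducing the claim to $\Exs[\tanh^2(sZ-c^*)] \geq \contracasym^2/2$. I have checked the identities ($\weight e^t + (1-\weight)e^{-t} = \sqrt{1-\contracasym^2}\cosh(t-c^*)$, $\tanh(c^*)=\contracasym$, and the inclusion $\{sZ\leq 0\}\subseteq\{|sZ-c^*|\geq c^*\}$) and they are right. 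Interestingly, both arguments ultimately extract the same factor of $\tfrac{1}{2}$ from the probability of a half-space (the paper via $\Exs[V_1^2\mathbb{I}(\event_{\theta_u})] \leq \Exs[V_1^2\mathbb{I}(V_1\geq 0)] = \tfrac12$), but your version avoids the path integral and the operator-norm bookkeeping entirely and makes the equality case transparent; it is arguably the cleaner proof of the contraction.

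For part~(b), your contraction-plus-perturbation bootstrap is exactly the paper's scheme (Balakrishnan et al.'s Theorem~2 with $r = \enorm{\theta^0}$), so the only substantive question is the uniform deviation bound, and here there is a concrete gap. A single-scale $\epsilon$-net over $\{\enorm{\theta}\leq r\}$ combined with the Lipschitz bound $\|X\|^2/\sd^2$ forces $\epsilon \asymp r/\sqrt{\dims\obs}$ to make the discretization error comparable to the target $\sd^2 r\sqrt{\dims/\obs}$, so the net has metric entropy $\asymp \dims\log(\dims\obs)$ and the union bound over its points yields $\sd^2 r\sqrt{(\dims\log(\dims\obs)+\log(1/\threshold))/\obs}$ --- an extra $\sqrt{\log \obs}$ that survives into the final rate. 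Your proposed remedy, controlling $\tfrac{1}{\obs}\sum_i\|X_i\|^2$ by a separate high-probability envelope, fixes the randomness of the Lipschitz constant but not this entropy-induced logarithm, which is where the actual loss occurs. The paper avoids it by symmetrizing and applying the Ledoux--Talagrand contraction to the centered, $2$-Lipschitz (in $\theta^\top x$) multiplier $2(\weightFun_\theta(x)-\weight)$, which linearizes the supremum over the ball into $4\lambda r\,\opnorm{\tfrac{1}{\obs}\sum_i\radem_iX_iX_i^\top}$; the only discretization then needed is a constant-resolution $1/8$-net of the direction sphere with entropy $\asymp \dims$, independent of $\obs$. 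To rescue your route you would need genuine chaining (the Dudley integral $\int_0^r\sqrt{\dims\log(r/\epsilon)}\,d\epsilon \asymp r\sqrt{\dims}$ does kill the log), not just an envelope bound. A smaller point: your pointwise bound "$\sd\sqrt{(\dims+\log(1/\threshold))/\obs}$" does not by itself carry the $(\sd r + \contracasym)$ prefactor of the target; that prefactor comes from splitting off the $(2\weight-1)\tfrac{1}{\obs}\sum_iX_i$ term and noting that the remaining multiplier vanishes at $\theta=0$, which the paper does explicitly and your sketch leaves implicit.
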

\noindent See Appendix~\ref{sub:proof_of_theorem_thm:ThmUnbalanced} for the proof of this theorem.  \\


\paragraph{Fast convergence of population EM}

The bulk of the effort in proving Theorem~\ref{ThmUnbalanced} lies in
establishing the guarantee~\eqref{EqnUnbalancedPop} for the population
EM iterates. Such a contraction bound immediately yields the exponential
fast convergence of the population EM updates
$\theta^{\iter+1}=\updateM(\theta^{\iter})$ to $\thetastar=0$:
\begin{align}
  \label{eq:unbalanced_pop}
  \enorm{\theta^{T}} \leq \smallradius \quad\text{for }\quad T \geq
  \frac{1}{\log\frac{1}{(1-\contracasym^2/2)}} \cdot {\log\parenth{
      \frac{\enorm{\theta^0}}{\smallradius}}}.
\end{align}
Since the mixture weights $(\weight, 1 - \weight)$ are bounded away
from $1/2$, we have that $\contracasym=|1 - 2 \weight|$ is bounded
away from zero, and thus population EM iterates converge in
$\order{\log(1/\smallradius)}$ steps to an $\smallradius$-ball around
$\thetastar=0$.  This result is equivalent to showing that in the
unbalanced instance $(\weight \neq 1/2)$, the log-likelihood is
strongly concave around the true parameter.


\paragraph{Statistical rate of sample EM}

Once the bounds~\eqref{EqnUnbalancedPop}
and~\eqref{eq:unbalanced_pop}) have been established, the proof of the
statistical rate~\eqref{EqnUnbalancedSample} for sample EM utilizes
the scheme laid out by Balakrishnan et al.~\cite{Siva_2017}. In
particular, we prove a non-asymptotic uniform law of large numbers
(Lemma~\ref{lemma:bound_EM_operators} stated in
Section~\ref{SecSuboptimal}) that allows for the translation from
population to sample EM iterates.  Roughly speaking,
Lemma~\ref{lemma:bound_EM_operators} guarantees that for any radius $r
> 0$, tolerance $\delta \in (0,1)$, and sufficiently large $n$, we
have
\begin{align}
  \label{eq:radem_sneak_peek}  
  \Prob \left [ \sup \limits_{\|\theta\|_2 \leq r}
    \enorm{M_{n}(\theta) - M(\theta)} \leq \unicontwo \sigma( \sigma r
    + \contracasym ) \sqrt{\frac{d+\log(1/\delta)}{n}} \; \right] \geq
  1-\delta.
\end{align}  
This bound, when combined with the contractive
behavior~\eqref{EqnUnbalancedPop} or equivalently the exponentially
fast convergence~\eqref{eq:unbalanced_pop} of the population EM
iterates allows us to establish the stated
bound~\eqref{EqnUnbalancedSample}. (See, e.g., Theorem~2 in the
paper~\cite{Siva_2017}.)

Putting the pieces together, we conclude that the \mbox{sample EM}
updates converge to an estimate of $\thetastar$---that has Euclidean
error of the order $(\dims/\obs)^{\frac{1}{2}}$---after a relatively
small number of steps that are of the order $\log(n/d)$.  Note that
this theoretical prediction is verified by the simulation study in
Figure~\ref{fig:snr_effect}(b) for the univariate setting ($d=1$) of
the unbalanced mixture-fit.  In Figure~\ref{fig:unbalanced_rates}, we
present the scaling of the radius of the final EM
iterate\footnote{Refer to the discussion before
  Section~\ref{sec:set_up} for details on the stopping rule for EM.}
with respect to the sample size $n$ and the dimension $d$, averaged
over $400$ runs of sample EM for various settings of $(n, d)$.  Linear
fits on the log-log scale in these simulations suggest a rate close to
$(d/n)^{\frac{1}{2}}$ as claimed in Theorem~\ref{ThmUnbalanced}.
\begin{figure}[h]
  \begin{center}
    \begin{tabular}{cc}
      \widgraph{0.45\textwidth}
               {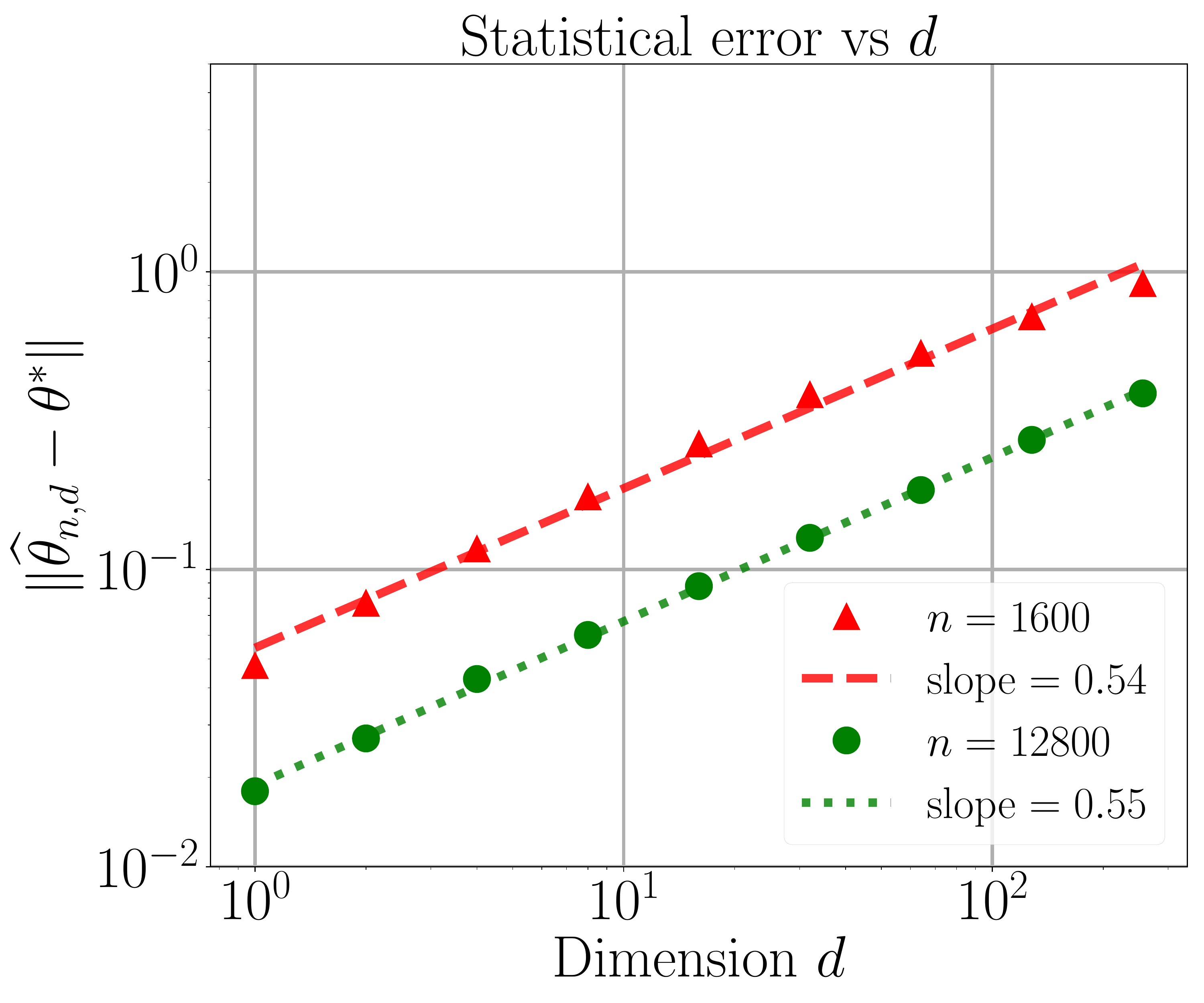}
               & \widgraph{0.45\textwidth}
               {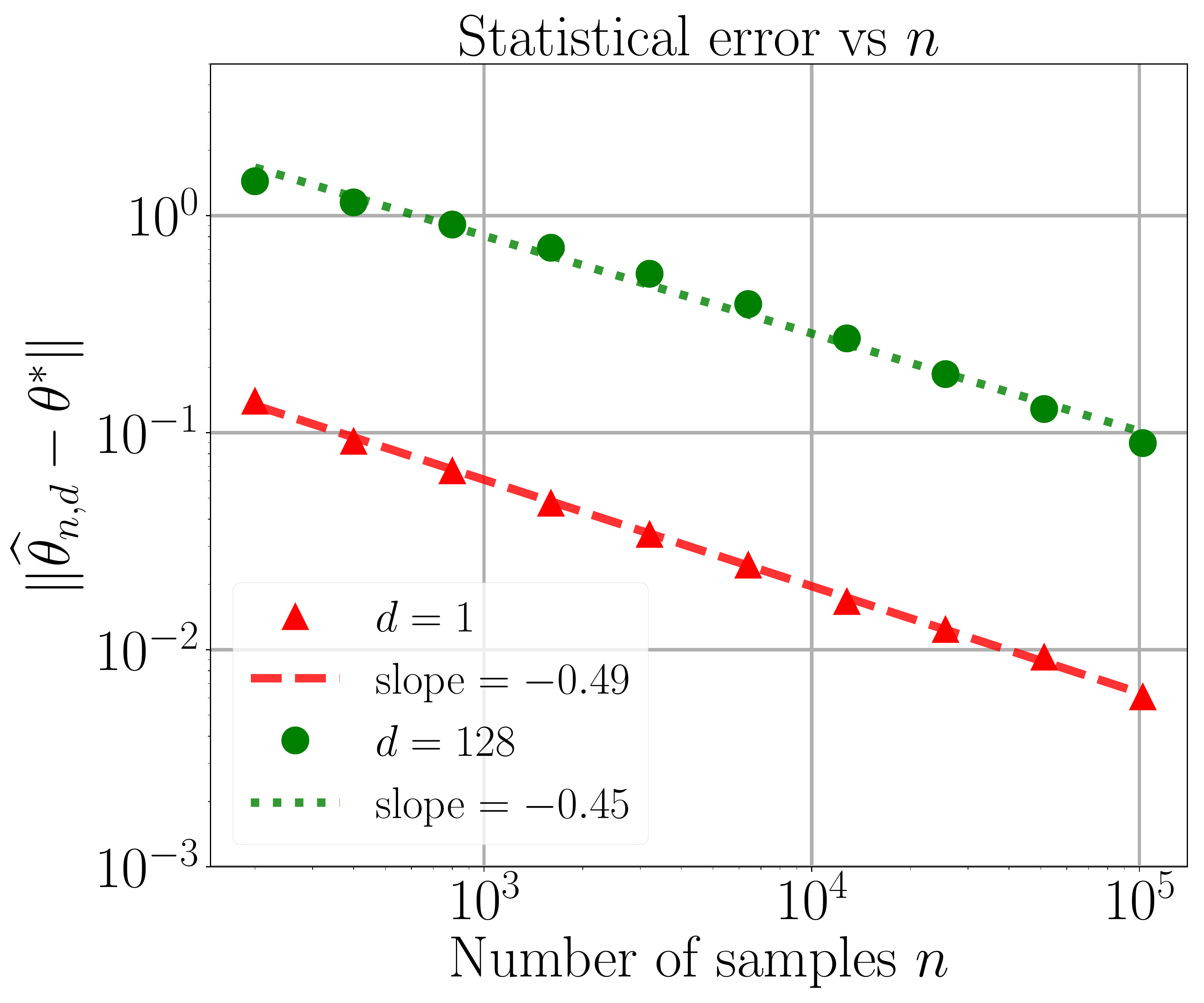}
               \\
(a) & (b)
    \end{tabular}
    \caption{Scaling of the Euclidean error $\|\widehat \theta_{n,d} -
      \thetastar\|_2$ for EM estimates $\widehat \theta_{n,d}$
      computed using the unbalanced ($\weight\neq \frac{1}{2}$) 
      mixture-fit~\eqref{EqModelFitNew}.
      Here the true data distribution is $\NORMAL(0, I_d)$, i.e., 
      $\thetastar=0$, and
      $\widehat\theta_{n, d}$ denotes the EM iterate upon convergence
      when we fit a two-mixture model with mixture weights $(0.3,
      0.7)$ using $n$ samples in $d$ dimensions. 
      (a)~Scaling with respect to $\dims$ for $\obs \in
      \braces{1600, 12800}$.  (b)~Scaling with respect to $\obs$ for
      $\dims \in \braces{1, 128}$. We ran experiments for several
      other pairs of $(n, d)$ and the conclusions were the same.  The
      empirical results here show that that our theoretical upper
      bound of the order $(\dims/\obs)^{\tfrac{1}{2}}$ on the EM solution is
      sharp in terms of \mbox{$n$ and $d$}.}
  \label{fig:unbalanced_rates}
  \end{center}
\end{figure}

\paragraph{Remark} 
We make two comments in passing. First, the value of $\enorm{\theta^0}$
in the convergence rate of sample EM updates in
Theorem~\ref{ThmUnbalanced} can be assumed to be of constant
order; this assumption stems from the fact the population EM operator
maps any $\theta^0$ to a vector with norm smaller than $\sqrt{2/ \pi}$
(cf. Lemma~\ref{lemma:bound_initialization} in
Appendix~\ref{sec:ini_unbalanced}). Second, when the weight parameter~$\weight$
is assumed to be unknown in the model fit~\eqref{EqModelFitNew}, the EM
algorithm exhibits fast convergence when $\weight$ is initialized
sufficiently away from $\frac{1}{2}$; see 
Section~\ref{sub:when_the_weights_are_unknown} for more details.


\paragraph{From unbalanced to balanced fit} 
\label{par:from_unbalanced_to_balanced_fit}
The bound~\eqref{eq:unbalanced_pop} shows that the extent of unbalancedness
in the mixture weights plays a crucial role in the
geometric rate of convergence for the population EM.  
When the mixtures become more balanced, that
is, weight $\weight$ approaches $1/2$ or equivalently $\contracasym$
approaches zero, the number of steps $T$
required to achieve $\smallradius$-accuracy scales as
$\order{{\log(\enorm{\theta^0}/\smallradius)}/{\contracasym^2}}$
and in the limit $\contracasym \rightarrow 0$, this bound
degenerates to $\infty$ for any finite $\smallradius$.  Indeed,
the bound~\eqref{EqnUnbalancedPop} from Theorem~\ref{ThmUnbalanced}
simply states that the population EM operator is non-expansive for balanced
mixtures~($\contracasym=0$), 
and does not provide any particular rate of convergence for this case.
It turns out that the EM algorithm is worse in the balanced case, both in terms
of the optimization speed and in terms of the statistical rate.  
This slower statistical rate is in accord with existing results for
the MLE in over-specified mixture models~\cite{Chen1992}; the novel
contribution here is the rigorous analysis of the analogous behavior
for the EM algorithm.


\subsection{Behavior of EM for balanced mixtures} 
\label{ssub:population_em_for_balanced_mixtures}

In this section, we first provide a sharp characterization of the
algorithmic rate of convergence of the population EM update for the balanced
fit (see Section~\ref{ssub:slow_convergence_of_population_em}).  We
then provide sharp bound for the statistical rate for
the sample EM updates (cf. Section~\ref{SecSubBoundsSampleEM}).


\subsubsection{Slow convergence of population EM}
\label{ssub:slow_convergence_of_population_em}
We now analyze the behavior of the population EM operator
for the balanced fit.  We show that it is globally convergent, albeit
with a contraction parameter that depends on $\theta$, and degrades
towards $1$ as $\|\theta\|_2 \rightarrow 0$.  Our statement involves
the constant $p \defn \Prob(\abss{X} \leq 1) + \frac{1}{2}
\Prob(\abss{X}>1)$, where $X \sim \Ncal(0,1)$ denotes a standard
normal variate. (Note that $p<1$.)

\begin{theorem} 
\label{thm:pop_over}
Suppose that we fit a balanced instance ($\weight=\frac{1}{2}$)
of the mixture model~\eqref{EqModelFitNew} to $\NORMAL(0,
\sd^2 I_d)$ data. Then the population EM operator~\eqref{EqnPopMupdate}  
${\theta \mapsto \updateM(\theta)}$
has the following properties:
\begin{enumerate}[label=(\alph*)]
  \item For all non-zero
$\theta$, we have
\begin{subequations}
  \begin{align}
    \label{EqnPopEMUpper}
    \frac{\enorm{\updateM(\theta)}}{\enorm{\theta}} & \leq \gamup(\theta)
    \; \defn 1 - p + \frac{p}{1 + \frac{\enorm{\theta}^2}{2\sd^2}} \; < \;
    1.
\end{align}
  \item For all non-zero $\theta$ such that $\enorm{\theta}^2
  \leq
\frac{5\sd^2}{8}$, we have
\begin{align}
  \label{EqnPopEMLower}
\frac{\enorm{\updateM(\theta)}}{\enorm{\theta}} & \geq \gamlow(\theta)
\; \defn \; \frac{1}{1 + \frac{2 \enorm{\theta}^2}{\sd^2}}.
\end{align}
\end{subequations}
\end{enumerate}
\end{theorem}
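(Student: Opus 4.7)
First I would reduce both claims to scalar inequalities on a single function $g(t)$ of $t = \enorm{\theta}/\sd$. By rotational symmetry of $\NORMAL(0, \sd^2 I_d)$ and the explicit formula $2\weightFun_\theta(X) - 1 = \tanh(\langle X, \theta\rangle/\sd^2)$ valid for $\weight = 1/2$, the update $\updateM(\theta)$ must lie along $\theta$. Writing $X = Y u + X_\perp$ with $u = \theta/\enorm{\theta}$, $Y = \langle X, u\rangle \sim \NORMAL(0,\sd^2)$, and $X_\perp$ an independent Gaussian component, I obtain $\updateM(\theta) = \Exs[\tanh(\enorm{\theta} Y/\sd^2)\,Y]\,u$. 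Changing variables to $Z = Y/\sd \sim \NORMAL(0,1)$ and applying Stein's lemma $\Exs[Z f(Z)] = \Exs[f'(Z)]$ to $f(z) = \tanh(tz)$ gives
\begin{align*}
\frac{\enorm{\updateM(\theta)}}{\enorm{\theta}} \;=\; \Exs[\sech^2(tZ)] \;=:\; g(t),
\end{align*}
and the strict $g(t) < 1$ in (a) follows immediately because $\sech^2(tZ) < 1$ almost surely when $t > 0$.

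\textbf{Upper bound, part (a).} My plan is to build the bound from two elementary pointwise estimates: the trivial $\sech^2(y) \leq 1$ and the sharper $\sech^2(y) \leq 1/(1 + y^2/2)$, the latter following from $\cosh(y) \geq 1 + y^2/2$ (Taylor series). Because $\sech^2$ is even and strictly decreasing on $[0,\infty)$, the sharper bound yields $\sech^2(tZ) \leq \sech^2(t) \leq 1/(1+t^2/2)$ unconditionally on the event $\{|Z| \geq 1\}$. Splitting the expectation across $\{|Z| \leq 1\}$ and $\{|Z| > 1\}$, I would refine the $\{|Z| \leq 1\}$ contribution using the pointwise bound $\sech^2(tZ) \leq 1/(1+t^2 Z^2/2)$ combined with a chord-type inequality comparing $1/(1+t^2 Z^2/2)$ to its endpoint values at $Z^2 = 0$ and $Z^2 = 1$. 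Careful algebraic bookkeeping of these convex combinations then produces the weight $p = \Prob(|Z| \leq 1) + \tfrac{1}{2} \Prob(|Z| > 1)$ as the coefficient of $1/(1 + t^2/2)$.

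\textbf{Lower bound, part (b).} I would start from $\sech^2(y) \geq 1 - y^2$, which is equivalent to $\tanh^2(y) \leq y^2$ and thus to the elementary $|\tanh(y)| \leq |y|$. Taking expectations yields $g(t) \geq 1 - t^2\Exs[Z^2] = 1 - t^2$, and a direct algebraic check gives $(1-t^2)(1+2t^2) = 1 + t^2 - 2t^4 \geq 1$ if and only if $t^2 \leq 1/2$. For the remaining range $t^2 \in (1/2, 5/8]$, I would use the sharper Taylor lower bound $\sech^2(y) \geq 1 - y^2 + \tfrac{2}{3} y^4 - C y^6$ valid for $|y|$ below a suitable threshold (combined with $\sech^2 \geq 0$ outside), and integrate against $Z \sim \NORMAL(0,1)$ using $\Exs[Z^4] = 3$ to obtain $g(t) \geq 1 - t^2 + 2 t^4 - O(t^6)$. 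The algebraic identity $(1 - t^2 + 2 t^4)(1 + 2 t^2) = 1 + t^2 + 4 t^6 \geq 1$, together with the constraint $t^2 \leq 5/8$ absorbing the higher-order remainder, then delivers the claimed $g(t) \geq 1/(1 + 2t^2)$.

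\textbf{Main obstacle.} The hardest step will be pinning down the precise constant $p$ in (a). The naive split that applies $\sech^2(tZ) \leq 1$ on $\{|Z| \leq 1\}$ and $\sech^2(tZ) \leq 1/(1 + t^2/2)$ on $\{|Z| > 1\}$ only yields $g(t) \leq \Prob(|Z| \leq 1) + \Prob(|Z| > 1)/(1 + t^2/2)$, whose coefficient on the constant term is $\Prob(|Z| \leq 1)$ rather than the smaller $1 - p = \tfrac{1}{2}\Prob(|Z| > 1)$; this is strictly weaker than the theorem. Recovering the exact constant $p$ requires the more delicate handling of the $\{|Z| \leq 1\}$ region described above, and verifying that the resulting linear combination collapses to exactly $(1 - p) + p/(1 + t^2/2)$ is the most computationally intricate step; the lower bound in (b) is technically simpler but needs just enough sharpness in the Taylor expansion to extend past $t^2 = 1/2$.
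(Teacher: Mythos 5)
Your reduction to the scalar identity $\enorm{\updateM(\theta)}/\enorm{\theta} = \Exs[\mathrm{sech}^2(tZ)]$ with $t=\enorm{\theta}/\sd$ and $Z\sim\NORMAL(0,1)$ is correct, and it is arguably cleaner than the paper's route for part (a): the paper instead Taylor-expands along the segment $u\mapsto u\theta$ and bounds $\int_0^1\opnorm{\Exs[\Gamma_{\theta_u}]}\,du$, while for part (b) it arrives at exactly your identity via $\int_0^1(e^{au}+e^{-au})^{-2}\,du=\tanh(a)/(4a)$ plus Stein's lemma. For part (a), however, your split-and-chord argument, while sound, does not produce the stated constant: carrying it out gives $1-q+q/(1+t^2/2)$ with $q=\Exs[Z^2\mathbf{1}\{|Z|\le 1\}]+\Prob(|Z|>1)\approx 0.52$, not $p\approx 0.84$. (To be fair, the paper's own proof of claim~\eqref{eq:gamma_op} establishes $p+(1-p)/(1+t^2/2)$, i.e., with the roles of $p$ and $1-p$ swapped relative to the theorem statement, so the exact constant is already inconsistent there; any universal coefficient in $(0,1)$ suffices for the downstream arguments.)

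The genuine gap is in part (b) on the range $t^2\in(1/2,5/8]$. The sixth-order Taylor minorant $\mathrm{sech}^2(y)\ge 1-y^2+\tfrac{2}{3}y^4-\tfrac{17}{45}y^6$ integrates, using $\Exs[Z^4]=3$ and $\Exs[Z^6]=15$, to $g(t)\ge 1-t^2+2t^4-\tfrac{17}{3}t^6$; at $t^2=5/8$ this is about $-0.22$, far below the target $1/(1+2t^2)=4/9$, and it already fails at $t^2=1/2$, where it gives roughly $0.29<1/2$. The large Gaussian moments make any fixed-order truncation useless on this range, so "absorbing the higher-order remainder" is not possible as proposed. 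The repair is either the paper's Jensen step, $\Exs\bigl[(2+e^{2tZ}+e^{-2tZ})^{-1}\bigr]\ge (2+2e^{2t^2})^{-1}$ followed by $e^s\le 1+2s$ for $s\le 5/4$, or, even more simply within your framework, the pointwise bound $\cosh(y)\le e^{y^2/2}$, which yields $g(t)\ge\Exs[e^{-t^2Z^2}]=(1+2t^2)^{-1/2}\ge (1+2t^2)^{-1}$ with no restriction on $t$ at all.
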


\noindent See Appendix~\ref{sub:proof_of_theorem_thm:pop_over} for the
proof of Theorem~\ref{thm:pop_over}.  \\

The salient feature of Theorem~\ref{thm:pop_over} is that the
contraction coefficient $\gamup(\theta)$ is not globally bounded away
from $1$ and in fact satisfies $\lim_{\theta\rightarrow 0}
\gamup(\theta) = 1$.  In conjunction with the lower
bound~\eqref{EqnPopEMLower}, we see that
\begin{align}
  \label{eq:pop_approx}
\frac{\enorm{\updateM(\theta)}}{\enorm{\theta}} & \asymp \parenth{1 -
  \frac{\enorm{\theta}^2}{\sd^2}} \qquad \mbox{for small
  $\enorm{\theta}$.}
\end{align}
This precise contraction behavior of the population EM operator is in 
accord with that of the simulation study in Figure~\ref{fig:population_em}(b). 

The preceding results show that the population EM updates should
exhibit two phases of behavior.  In the first phase, up to a
relatively coarse accuracy of the order $\sigma$, the iterates exhibit
geometric convergence.  Concretely, we are guaranteed to have
$\enorm{\theta^{\Tone}} \leq \sqrt{2} \sigma$ after running the
algorithm for $\Tone \defn \frac{\log(\enorm{\theta^0}^2 /
  (2\sd^2))}{\log(2/(2-p))}$ steps.  In the second phase, as the error
decreases from $\sqrt{2} \sigma$ to a given $\smallradius \in \big
(0, \sqrt{2} \sigma \big)$, the convergence rate becomes
sub-geometric; concretely, we have
\begin{align}
\label{eq:balanced_pop_rate}
  \enorm{\theta^{\Tone + t}} & \le \,
  \smallradius\quad \text{for} \quad t \geq
  \frac{c\sd^2}{\smallradius^2} \log(\sd/\smallradius).
\end{align}
Note that the conclusion~\eqref{eq:balanced_pop_rate} shows that for small
enough $\smallradius$, the population EM takes
$\Theta(\log(1/\smallradius)/\smallradius^2)$ steps to find
$\smallradius$-accurate estimate of $\thetastar=0$.  This rate is
extremely slow compared to the geometric rate
$\mathcal{O}(\log(1/\smallradius))$ derived for the unbalanced
mixtures in Theorem~\ref{ThmUnbalanced}. Hence, the slow rate establishes
a qualitative difference in the behavior of the EM
algorithm between the balanced and unbalanced setting.

Moreover, the sub-geometric rate of EM in the balanced case is also in stark
contrast with the favorable behavior of EM for the exact-fitted
settings analyzed in past work.  \mbox{Balakrishnan et
  al.~\cite{Siva_2017}} showed that when the EM algorithm is used to
fit a two-component Gaussian mixture with sufficiently large
value of $\frac{\enorm{\theta^\star}}{\sigma}$ 
(known as the high signal-to-noise ratio, or high SNR for short), the
population EM operator is contractive, and hence geometrically
convergent, within a neighborhood of the true parameter $\thetastar$.
In a later work on the two-component balanced mixture fit model,
Daskalakis~\etal~\cite{Daskalakis_colt2017} showed that the
convergence is in fact geometric for any non-zero value of the SNR.
The model considered in Theorem~\ref{thm:pop_over} can be seen as
the limiting case of weak signal for a two mixture model---which degenerates
to the Gaussian distribution when the SNR becomes exactly zero. For such
a limit, we observe that the fast convergence of population EM sequence
no longer holds.

\subsubsection{Upper and lower bounds on sample EM}
\label{SecSubBoundsSampleEM}
We now turn to the statements of upper and lower bounds on the rate of
the sample EM iterates for the balanced fit on Gaussian data. 
We begin with an upper bound, which involves the previously defined function
$\gamup(\theta)\; \defn 1 - p + p/ \big({1 + \frac{\enorm{\theta}^2}{2\sd^2}}\big)$.
\begin{theorem} 
	\label{theorem:convergence_rate_sample_EM}
  Consider the sample EM updates $\theta^t = \updateM_\obs(\theta^{t-1})$
  for the balanced instance ($\weight=\frac{1}{2}$) of 
  the mixture model~\eqref{EqModelFitNew}
  based on $n$ i.i.d. $\NORMAL(0, \sd^2 I_d)$ samples.
  Then, there exist universal constants $\braces{\unicon'_k}_{k=1}^4$
	such that for any scalars $\powerr \in (0, \frac{1}{4})$ and $\delta
	\in (0, 1)$, any sample size \mbox{$\obs \geq \unicon'_1 (\dims
		+\log(\log(1/\powerr)/\delta)) \;$} and any iterate number \mbox{$t \geq  \unicon'_2
		\log\frac{\Vert{\theta^0}\Vert^2\obs}{\sd^2\dims}+ \unicon'_3
		\left(\frac{\obs}{\dims}\right)^{\frac{1}{2}-2\powerr}
		\log(\frac{\obs}{\dims}) \log(\frac{1}{\powerr})$,} 
	we have
	\begin{align}
	\label{eqn:theorem_sample_em_balanced_expression}
	\|\theta^t \|_{2} \leq \left [\enorm{\theta^0} \cdot \prod_{j=0}^{t-1}
	\gamup(\theta^j) \right] + \unicon'_4 \sd \parenth{\frac{\sd^2
			(\dims + \log \frac{\log(4 /
            \smallradius)}{\delta})}{\obs} }^{\frac{1}{4} - \powerr},
	\end{align}
	with probability at least $1-\delta$.
\end{theorem}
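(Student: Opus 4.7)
The plan is to combine the population contraction bound from Theorem~\ref{thm:pop_over} with a \emph{localized} empirical-process bound and a recursive argument that progressively sharpens the radius of the iterates. Since $\gamup(\theta) \to 1$ as $\theta \to 0$, a direct application of the Balakrishnan~et~al. framework with a uniform $\order{\sqrt{d/\obs}}$ deviation bound would yield only the rate $(d/\obs)^{1/2}$. The key observation is that for the balanced fit one has the closed form $\updateM_\obs(\theta) = \frac{1}{\obs}\sum_i \tanh(\theta^\top X_i/\sd^2) X_i$, and the summand vanishes linearly in $\enorm{\theta}$ near the origin. Consequently the noise term $\enorm{\updateM_\obs(\theta) - \updateM(\theta)}$ itself shrinks with $\enorm{\theta}$, and balancing the cubic population contraction $\enorm{\theta}^3/\sd^2$ (from Theorem~\ref{thm:pop_over}(b)) against this $\enorm{\theta}$-dependent perturbation yields the critical radius $\enorm{\theta} \asymp \sd(d/\obs)^{1/4}$.

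\textbf{Phase 1 (burn-in to a constant radius).} For $\enorm{\theta} \gtrsim \sd$ the upper bound of Theorem~\ref{thm:pop_over}(a) gives $\gamup(\theta) \le 1 - p/2$, which is bounded away from $1$. Combining this with a crude $\order{\sd \sqrt{(d + \log(1/\delta))/\obs}}$ uniform deviation bound over any fixed ball (proved via a standard symmetrization argument as in Lemma~\ref{lemma:bound_EM_operators}), I get that after $T_0 = \order{\log(\enorm{\theta^0}^2 \obs/(\sd^2 d))}$ iterations the sample EM sequence satisfies $\enorm{\theta^{T_0}} \le c\sd$. This accounts for the first logarithmic term in the iteration count.

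\textbf{Phase 2 (recursive sharpening).} I introduce geometrically decreasing levels $r_k = c\sd \cdot 2^{-k}$ and show inductively that the iterates enter the ball of radius $r_k$. The central technical ingredient is the following localized concentration: for each $r \le c\sd$, with high probability,
\begin{align*}
\sup_{\enorm{\theta} \le r}\enorm{\updateM_\obs(\theta) - \updateM(\theta)} \le c' \, r \sqrt{\tfrac{d + \log(1/\delta)}{\obs}} + \text{lower-order terms}.
\end{align*}
The $r$-dependent scaling is derived via a Rademacher-chaining argument applied to the function class $\{x \mapsto \tanh(\theta^\top x/\sd^2)x : \enorm{\theta} \le r\}$, whose envelope and Lipschitz constant in $\theta$ are both of order $r$ (using $|\tanh(u)| \le |u|$). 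Combined with Theorem~\ref{thm:pop_over}, this yields the one-step recursion $\enorm{\theta^{t+1}} \le \gamup(\theta^t)\enorm{\theta^t} + c'' r_k \sqrt{d/\obs}$, which contracts whenever $\enorm{\theta^t}^3/\sd^2 \gtrsim r_k\sqrt{d/\obs}$. By the slow population rate in~\eqref{eq:balanced_pop_rate} (applied with perturbation), the number of iterations to descend from level $k$ to level $k+1$ is $\order{\sd^2/r_k^2}$. Summing a geometric series over $k$ until $r_k \asymp \sd(d/\obs)^{1/4 - \powerr}$ produces the stated iteration count $\order{(\obs/d)^{1/2 - 2\powerr}\log(\obs/d)\log(1/\powerr)}$, and the $\powerr > 0$ slack is what ensures strict contraction at each level.

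\textbf{Main obstacle.} The crux of the proof is the localized empirical-process bound with sharp $r$-dependent scaling: without the factor of $r$ one loses a factor of $(\obs/d)^{1/4}$ in the final rate. Establishing it requires exploiting the smoothness of $\tanh$, careful symmetrization, and Dudley's entropy integral on the Euclidean ball of radius $r$; it is this ``annulus/ball-based localization of the empirical process'' flagged in the introduction as the novel technical contribution. A secondary subtlety is that the bound must hold simultaneously across all the $\order{\log(1/\powerr)}$ radii $r_k$, which generates the $\log\log(1/\powerr)/\delta$ factor inside the failure probability via a union bound; and, because $\theta^t$ is sample-dependent, the deviation bound must be uniform over $\theta$ rather than pointwise, a point at which the localized class structure is essential.
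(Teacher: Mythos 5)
Your proposal is correct and rests on the same two pillars as the paper's proof: the localized empirical-process bound $\sup_{\enorm{\theta}\le r}\enorm{\updateM_\obs(\theta)-\updateM(\theta)}\lesssim \sd^2 r\sqrt{(\dims+\log(1/\delta))/\obs}$ (the paper's Lemma~\ref{lemma:bound_EM_operators}, proved exactly as you describe via symmetrization and Ledoux--Talagrand contraction), the radius-dependent population contraction of Theorem~\ref{thm:pop_over}, a non-expansiveness step to confine iterates, and an induction over shrinking radii with a union bound over the finitely many radii used. The one genuine difference is the epoch schedule: you halve the radius at each level, $r_k = c\sd\,2^{-k}$, whereas the paper uses power-law radii $(\dims/\obs)^{\seqalpha{\ind}}$ and chooses the inner radius of each annulus to equal that epoch's equilibrium radius, which is what produces the recursion $\seqalpha{\ind+1}=\seqalpha{\ind}/3+1/6$ converging to $1/4$. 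Both schedules balance the same two quantities and both reach the $(\dims/\obs)^{1/4}$ scale; the paper's choice buys only $\order{\log(1/\powerr)}$ epochs (hence the $\log(1/\powerr)$ and $\log(\log(1/\powerr)/\delta)$ factors in the theorem statement), while your dyadic scheme requires $\Theta(\log(\obs/\dims))$ levels --- so your own count of ``$\order{\log(1/\powerr)}$ radii'' is internally inconsistent, and your union bound would instead yield a $\log(\log(\obs/\dims)/\delta)$ term in the sample-size condition (harmless, but not literally the stated constants). Two points you should make explicit to close the argument: (i) the non-expansiveness claim --- that once $\enorm{\theta^t}\le r_k$ the iterates never exit the ball of radius $r_k$, which requires a short case analysis on whether $\enorm{\theta^t}$ is above or below $r_{k+1}$ (the paper's Lemma~\ref{LemNonExpansive}); and (ii) that the per-level iteration counts $\order{\sd^2/r_k^2}$ form a geometric series dominated by the final level, so the total matches $(\obs/\dims)^{1/2-2\powerr}$ up to logarithms. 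With those supplied, your route is a valid and arguably more elementary packaging of the same localization argument.
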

\noindent See Section~\ref{sec:new_techniques_for_analyzing_sample_em} for
a discussion of the techniques employed to prove this theorem.
The detailed proof is provided in Appendix~\ref{sub:proof_of_theorem_thmsampleoverfit}, where we also provide some
more details on the definitions of these constants.

As we show in our proofs, once the iteration number $t$ satisfies the
lower bound stated in the theorem, the second term on the right-hand
side of the bound~\eqref{eqn:theorem_sample_em_balanced_expression}
dominates the first term; therefore, from this point onwards, the the
sample EM iterates have Euclidean norm of the order $(d/n)^{\frac{1}{4} -
	\powerr}$. Note that $\powerr \in (0, \frac{1}{4})$ can be chosen
arbitrarily close to zero, so at the expense of increasing the lower
bound on the number of iterations $t$ by a logarithmic factor 
$\log (1/\powerr)$,
we can obtain
rates arbitrarily close to $(d/n)^{\frac{1}{4}}$.

We note that earlier studies of parameter estimation for
over-specified mixtures, in both the frequentist~\cite{Chen1992} and
Bayesian settings~\cite{Ishwaran-2001,Nguyen-13}, have derived a rate
of $n^{-\frac{1}{4}}$ for the global maximum of the log likelihood.
To the best of our knowledge,
Theorem~\ref{theorem:convergence_rate_sample_EM} is the first
non-asymptotic algorithmic result that shows that such rates apply to
the fixed points and dynamics of the EM algorithm, which need not
converge to the global optima.

The preceding discussion was devoted to an upper bound on sample EM for
the balanced fit. Let us now match this upper bound, at least in the univariate
case $\dims = 1$, by showing that any non-zero fixed point of the sample
EM updates has Euclidean norm of the order $\obs^{-\frac{1}{4}}$.  In 
particular, we prove the following lower bound.
\begin{theorem}
	\label{theorem:sample_lower_bound}
	There are universal positive constants $\unicon, \unicon'$ such that for any
	non-zero solution $\fixedpointtheta$ to the sample EM fixed-point
	equation $\theta = \mupdate_\obs(\theta)$ for the balanced mixture
  fit, we have
	\begin{align}
	\Prob \brackets{\vert{\fixedpointtheta} \vert \geq \unicon \,
		\obs^{-\frac{1}{4}}} \geq \unicon'.
	\end{align}
\end{theorem}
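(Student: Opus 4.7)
The plan is to analyze the fixed-point equation $\theta = \mupdate_n(\theta)$ directly and exploit the $n^{-1/2}$-scale fluctuations of the empirical second moment. Specializing to the balanced univariate setting and taking $\sd = 1$ without loss of generality, the weight function~\eqref{EqnWeightFun} with $\weight = 1/2$ reduces to $\weightFun_\theta(x) = 1/(1 + e^{-2\theta x})$, so that $2\weightFun_\theta(x) - 1 = \tanh(\theta x)$, and the sample EM operator~\eqref{eq:sample_em_operator} simplifies to
\begin{align*}
\mupdate_n(\theta) = \frac{1}{n}\sum_{i=1}^n X_i \tanh(\theta X_i).
\end{align*}
For any nonzero fixed point $\fixedpointtheta$, dividing by $\fixedpointtheta$ yields the scalar identity
\begin{align*}
1 = \frac{1}{n}\sum_{i=1}^n X_i^2 \cdot \frac{\tanh(\fixedpointtheta X_i)}{\fixedpointtheta X_i}.
\end{align*}

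I would next use the exact representation $\tanh(y)/y = 1 - y^2/3 + y^4 h(y)$ and argue that $h$ is globally bounded on $\mathbb{R}$: since $\tanh(y)/y \in [0,1]$ for all $y$, the quantity $y^4 h(y) = \tanh(y)/y - 1 + y^2/3$ satisfies $|h(y)| \leq 1/(3y^2)$ for large $|y|$, while smoothness near zero handles the rest, giving a uniform constant $C$ with $|h(y)| \leq C$. Substituting back produces the exact polynomial identity
\begin{align*}
1 = \hat{m}_2 - \frac{\fixedpointtheta^2}{3}\hat{m}_4 + \fixedpointtheta^4 R_n(\fixedpointtheta),
\end{align*}
where $\hat{m}_k := n^{-1}\sum_{i=1}^n X_i^k$ and $|R_n(\theta)| \leq C \hat{m}_6$ uniformly in $\theta$.

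The next step is to identify a positive-probability event on which $\hat{m}_2 - 1 \geq c_1 n^{-1/2}$. Since $\hat{m}_2 - 1$ has zero mean, variance $2/n$, and bounded higher central moments, a Berry--Esseen bound (or the Paley--Zygmund inequality) yields universal constants $c_1, c' > 0$ with $\Prob[\hat{m}_2 - 1 \geq c_1 n^{-1/2}] \geq c'$. Standard sub-exponential concentration guarantees that $\{\hat{m}_4 \leq 4\}$ and $\{\hat{m}_6 \leq C_6\}$ each hold with probability at least $1 - o(1)$, so their intersection with the previous event retains a universal positive lower bound $c''$ on the probability. The conclusion then follows by combining the fixed-point identity with these events: if $|\fixedpointtheta| \geq 1$ the claim is trivial, and otherwise rearranging gives
\begin{align*}
\fixedpointtheta^2 \geq \frac{3(\hat{m}_2 - 1)}{\hat{m}_4} - \frac{3C\hat{m}_6 \fixedpointtheta^4}{\hat{m}_4} \geq \frac{3 c_1}{4\sqrt{n}} - C'\fixedpointtheta^4.
\end{align*}
A two-case dichotomy closes the argument: either $\fixedpointtheta^4 \leq c_1/(2C'\sqrt{n})$, in which case $\fixedpointtheta^2 \geq 3c_1/(8\sqrt{n})$ and hence $|\fixedpointtheta| \geq c n^{-1/4}$; or $\fixedpointtheta^4 > c_1/(2C'\sqrt{n})$, which already forces $|\fixedpointtheta| \geq c' n^{-1/8} \geq c'' n^{-1/4}$ for all $n \geq 1$.

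The main technical obstacle is establishing the uniform boundedness of the quartic remainder $h$, since I cannot a priori restrict $\fixedpointtheta$ to a small neighborhood of zero: even when $\fixedpointtheta$ is small, a few large samples $X_i$ can make $\fixedpointtheta X_i$ of order one or larger, so a naive local Taylor bound is insufficient. Once global control of the remainder via the ``linear plus cubic'' correction with a $\hat{m}_6$-dependent bound is in hand, the rest is classical empirical moment concentration paired with an anti-concentration estimate for $\hat m_2 - 1$.
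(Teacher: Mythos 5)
Your proposal is correct and follows essentially the same route as the paper's proof: both start from the fixed-point identity $\fixedpointtheta = n^{-1}\sum_i X_i\tanh(\fixedpointtheta X_i/\sd^2)$, replace $\tanh$ by a quadratic-minus-quartic polynomial approximation, and combine an anti-concentration bound for $\widehat{m}_2 - 1$ at scale $n^{-1/2}$ with concentration of the higher empirical moments. The only real difference is that the paper sidesteps your ``main technical obstacle'' entirely by using the one-sided global inequality $y\tanh(y)\ge y^2 - y^4/3$, valid for all real $y$ and proved by comparing power series, which eliminates the $\widehat{m}_6$-controlled remainder, the need to lower-bound $\widehat{m}_4$ when dividing, and the final two-case dichotomy.
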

\noindent 
See Appendix~\ref{sub:proof_of_theorem_theorem:lower_bound_fixed_point}
for the proof of this theorem.

Since the iterative EM scheme converges only to one of its fixed
points, the theorem shows that one cannot obtain a high-probability
bound for any radius smaller than $n^{-\frac{1}{4}}$. As a
consequence, with constant probability, the radius of convergence
$n^{-\frac{1}{4}}$ for sample EM convergence in
Theorem~\ref{theorem:convergence_rate_sample_EM} for the univariate
setting is tight.


\section{New techniques for sharp analysis of sample EM} 
\label{sec:new_techniques_for_analyzing_sample_em}

In this section, we highlight the new proof techniques introduced in
this work that are required to obtain the sharp characterization of
the sample EM updates in the balanced case
(Theorem~\ref{theorem:convergence_rate_sample_EM}).  We begin in
Section~\ref{SecSuboptimal} by elaborating that a direct application
of the previous frameworks leads to sub-optimal statistical rates for
sample EM in the balanced fit.  This sub-optimality motivates the
development of new methods for analyzing the behavior of the sample EM
iterates, based on an \emph{annulus-based localization argument} over a 
sequence of epochs, which we sketch out in \mbox{Sections~\ref{SecLocalize}
and \ref{SecSubKeyRecursion}.}  We remark that our novel techniques,
introduced here for analyzing EM with the balanced fit, are likely to
be of independent interest. We believe that they can potentially be
extended to derive sharp statistical rates in other settings when the
algorithm under consideration does not exhibit an geometrically fast convergence.


\subsection{A sub-optimal guarantee}
\label{SecSuboptimal}
Let us recall the set-up for the procedure suggested by Balakrishnan
et al.~\cite{Siva_2017}, specializing to the case where the true
parameter $\thetastar = 0$, as in our specific set-up.  Using the
triangle inequality, the norm of the sample EM iterates $\theta^{t + 1}
= \updateM_\obs(\theta^t)$ can be upper
bounded by a sum of two terms as follows:
\begin{align}
\label{EqnBreak}
  \enorm{\theta^{t + 1}} = \enorm{\updateM_\obs(\theta^{t} )} \leq
  \enorm{\updateM_\obs(\theta^t) - \updateM(\theta^t)} +
  \enorm{\updateM(\theta^t)}
\end{align}
for all $t \geq 0$. The first term on the right-hand side corresponds to the deviations
between the sample and population EM operators, and can be controlled
via empirical process theory.  The second term corresponds to
the behavior of the (deterministic) population EM operator, as applied
to the sample EM iterate $\theta^t$, and needs to be controlled via a
result on population EM.

Theorem~2 from Balakrishnan et al.~\cite{Siva_2017} is based on
imposing generic conditions on each of these two terms, and then using
them to derive a generic bound on the sample EM iterates.  In the
current context, their theorem can be summarized as follows.  For
given tolerances $\delta \in (0, 1)$, $\smallradius > 0$ and starting
radius $r > 0$, suppose that there exists a function \mbox{$\varepsilon(n,
\delta) > 0$}, decreasing in terms of the sample size $n$,
and a contraction coefficient \mbox{$\kappa \in (0, 1)$} such that
\begin{subequations}
\begin{align}
  \label{EqnSivaConditions}
  \sup_{\enorm{\theta} \geq \smallradius}
  \frac{\enorm{M(\theta)}}{\enorm{\theta}}\!\leq\!\kappa\text{ and }
  \Prob\brackets{\sup_{\enorm{\theta}\leq r}
    \enorm{M_n(\theta)\!-\!M(\theta)} \!\leq\! \varepsilon(n, \delta)} \!\geq\!
  1\!-\!\delta.
\end{align}
Then for a sample size $\obs$ sufficiently large and $\smallradius$
sufficiently small to ensure that
\begin{align}
  \label{EqnSivaBounds}
\smallradius \; \stackrel{(i)}{\leq} \; \frac{\varepsilon(n,
  \delta)}{1 - \kappa} \; \stackrel{(ii)}{\leq} \; r,
\end{align}
\end{subequations}
the sample EM iterates are guaranteed to converge to a ball of radius
${\varepsilon(n, \delta)}/({1-\kappa})$ around the true parameter
$\thetastar = 0$.

In order to apply this theorem to the current setting, we need to
specify a choice of $\varepsilon(n, \delta)$ for which the bound on
the empirical process holds.  The following auxiliary result provides
such control for us:
\begin{lemma}
\label{lemma:bound_EM_operators} 
There exists universal positive constants $c_1$ and $c_2$ such
that for any positive radius $r$, any threshold $\delta \in (0, 1)$,
and any sample size \mbox{$\obs \geq c_2\dims\log(1/\delta)$}, we have
\begin{align}
\label{EqnLemmaOneBound}
\Prob \brackets{\sup \limits_{\|\theta\|_2 \leq r}
  \enorm{M_{n}(\theta) - M(\theta)} \leq c_1 \sigma(\sigma r+\contracasym
  )
  \sqrt{\frac{d + \log(1/\delta)}{n}}} \geq 1- \delta,
\end{align}
where $\contracasym =\abss{1-2\weight}$ denotes the imbalance in the
mixture fit~\eqref{EqModelFitNew}.
\end{lemma}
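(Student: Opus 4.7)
My plan is to leverage the sigmoid representation $2\weightFun_\theta(x) - 1 = \tanh(x^\top\theta/\sd^2 + \alpha)$, where $\alpha := \tfrac{1}{2}\log(\weight/(1-\weight))$ satisfies $\tanh(\alpha) = 2\weight - 1$, and hence $\updateM(0) = \tanh(\alpha)\,\Exs[X] = 0$ because the data is centered Gaussian. This yields the decomposition
\begin{align*}
\updateM_\obs(\theta) - \updateM(\theta) = \underbrace{(2\weight - 1)\bar X_\obs}_{T_1} + \underbrace{\bigl[(\updateM_\obs(\theta) - \updateM_\obs(0)) - \updateM(\theta)\bigr]}_{T_2(\theta)}.
\end{align*}
The first summand $T_1$ is deterministic in $\theta$; since $\bar X_\obs \sim \Ncal(0, (\sd^2/\obs) I_\dims)$, a standard Gaussian norm concentration gives $\enorm{T_1} \leq c\,\contracasym\,\sd\sqrt{(\dims + \log(1/\threshold))/\obs}$ with probability at least $1 - \threshold/2$, yielding the $\contracasym\sd$ contribution in the claimed bound.

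The main task is to bound $\sup_{\enorm{\theta} \leq r}\enorm{T_2(\theta)}$. By the fundamental theorem of calculus,
\begin{align*}
\tanh(x^\top\theta/\sd^2 + \alpha) - \tanh(\alpha) = \frac{x^\top\theta}{\sd^2}\,\Phi_\theta(x), \quad \Phi_\theta(x) := \int_0^1 \sech^2(s x^\top\theta/\sd^2 + \alpha)\,ds \in [0, 1],
\end{align*}
so that $T_2(\theta) = (1/\sd^2)\,D_\obs(\theta)\,\theta$, where $D_\obs(\theta) := \tfrac{1}{\obs}\sum_{i=1}^\obs \Phi_\theta(X_i) X_iX_i^\top - \Exs[\Phi_\theta(X) XX^\top]$ is a centered, weighted sample-covariance matrix. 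Since $\enorm{T_2(\theta)} \leq (r/\sd^2)\,\opnorm{D_\obs(\theta)}$, it suffices to uniformly bound $\opnorm{D_\obs(\cdot)}$ over the $r$-ball. For each fixed $\theta$, because $\Phi_\theta(x) \in [0,1]$, an $\tvscalar$-net over the unit sphere in $\realdim$ combined with Bernstein's inequality applied to the sub-exponential quadratic forms $(u^\top X_i)^2$ delivers $\opnorm{D_\obs(\theta)} \leq c'\,\sd^2\,\sqrt{(\dims + \log(1/\threshold))/\obs}$ with probability at least $1 - \threshold/4$, under the sample-size hypothesis $\obs \geq c_2\,\dims\,\log(1/\threshold)$. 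Uniformity in $\theta$ is then obtained via a $\dims$-dimensional covering of the $r$-ball (at a resolution polynomial in $\sd/(\dims\sqrt{\obs})$) together with the Lipschitz estimate $\abss{\Phi_\theta(x) - \Phi_{\theta'}(x)} \leq C\enorm{x}\enorm{\theta - \theta'}/\sd^2$, with the discretization error controlled via $\chi^2$ tails on $\enorm{X_i}^2$.

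The main technical obstacle is achieving the sharp $\sqrt{\dims/\obs}$ dimensional scaling for $T_2$, rather than the naive $\dims/\sqrt{\obs}$ that one would obtain from entry-wise concentration of $h_\theta(X_i) := (\tanh(X_i^\top\theta/\sd^2 + \alpha) - \tanh(\alpha))X_i$. The trick is to exploit the outer-product structure $X_iX_i^\top$ by factoring out $\theta$ and rewriting $T_2(\theta)$ as a weighted sample covariance acting on $\theta$, for which standard operator-norm concentration provides the correct rate. Combining the estimates on $T_1$ and $T_2$ with a union bound yields the claimed inequality with a universal constant $c_1$.
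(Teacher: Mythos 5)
Your decomposition is essentially the paper's: the paper also splits the deviation into a mean-zero linear piece proportional to $2\weight-1$ (your $T_1$, handled by Gaussian concentration of $\bar X_\obs$) plus the centered part $\frac{1}{\obs}\sum_i 2(\weightFun_\theta(X_i)-\weight)X_i$ minus its expectation (your $T_2$), and both arguments ultimately reduce $T_2$ to the operator norm of a $d\times d$ empirical matrix acting on $\theta$, which is the right structural move to get $\sqrt{\dims/\obs}$ rather than $\dims/\sqrt{\obs}$. Where you diverge is in how uniformity over $\theta$ is achieved. The paper symmetrizes with Rademacher variables and applies the Ledoux--Talagrand contraction inequality: since $\theta\mapsto 2(\weightFun_\theta(x)-\weight)$ is $2$-Lipschitz in $\theta^\top x$ and vanishes at $\theta=0$, the supremum over the $r$-ball is absorbed \emph{exactly} into $4\lambda r\,\opnorm{\frac{1}{\obs}\sum_i\radem_iX_iX_i^\top}$, with no net over $\theta$ at all; a single $1/8$-cover of the unit sphere (of size $17^\dims$) then suffices, and the sub-exponential MGF bound gives the clean $\sd^2 r\sqrt{(\dims+\log(1/\threshold))/\obs}$.

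The concrete deficiency in your route is the discretization of the $r$-ball in $\theta$. Your Lipschitz estimate $\abss{\Phi_\theta(x)-\Phi_{\theta'}(x)}\leq C\enorm{x}\enorm{\theta-\theta'}/\sd^2$ forces the net resolution $\eta$ to be polynomially small in $\obs$ and $\dims$ (roughly $\eta\lesssim \sd/(\dims\sqrt{\obs})$ to keep the discretization error below $\sd^2\sqrt{\dims/\obs}$), so the $\theta$-net has $\log N_\theta\asymp \dims\log(r\dims\sqrt{\obs}/\sd)$ points. Union-bounding the fixed-$\theta$ Bernstein bound over this net inflates the deviation to $\sd^2\sqrt{(\dims\log(r\dims\obs/\sd)+\log(1/\threshold))/\obs}$, i.e.\ an extra $\sqrt{\log \obs}$ (and $\sqrt{\log(r/\sd)}$) factor relative to the bound~\eqref{EqnLemmaOneBound} as stated. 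This is not fatal for the qualitative conclusions downstream, but it does not prove the lemma in the stated form, and the logarithm would propagate through the annulus-based localization into the final $(\dims/\obs)^{1/4}$ rate. To close the gap you would need either the symmetrization-plus-contraction argument of the paper, or a genuine multi-scale chaining bound over $\theta$ in place of the single-scale net; the one-shot net-plus-union-bound cannot avoid paying for $\log N_\theta$.
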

\noindent The proof of this lemma is based on Rademacher complexity
arguments; see Appendix~B.1 for
the details. 

With the choice $r = \enorm{\theta^0}$,
Lemma~\ref{lemma:bound_EM_operators} guarantees that the second
inequality in line~\eqref{EqnSivaConditions} holds with
$\varepsilon(n, \delta) \lesssim \sd^2 \enorm{\theta^0}\sqrt{d/n}$.
On the other hand, Theorem~\ref{thm:pop_over} implies that for any
$\theta$ such that $\enorm{\theta} \geq \smallradius$, we have that
population EM is contractive with parameter bounded above by
$\kappa(\smallradius) \asymp 1 - \smallradius^2$.  In order to
satisfy inequality (i) in equation~\eqref{EqnSivaBounds}, we solve the
equation ${\varepsilon(n, \delta)}/({1-\kappa(\smallradius)}) =
\smallradius$.  Tracking only the dependency on $d$ and $n$, we
obtain\footnote{Moreover, with this choice of $\smallradius$,
  inequality (ii) in equation~\eqref{EqnSivaBounds} is satisfied with
  a constant $r$, as long as $n$ is sufficiently large relative to
  $d$.}
\begin{align}
\label{eq:vanilla_rates}
  \frac{\sqrt{d/n}}{\smallradius^2} = \smallradius \quad
  \Longrightarrow \quad \smallradius = \order{(d/n)^{\frac{1}{6}}},
\end{align}
which shows that the Euclidean norm of the sample EM iterate is
bounded by a term of order $(d/n)^{\frac{1}{6}}$.

While this rate is much slower than the classical $(d/n)^{\frac{1}{2}}$ rate
that we established in the unbalanced case, it does not coincide with
the $n^{-\frac{1}{4}}$ rate that we obtained in
Figure~\ref{fig:snr_effect}(b) for balanced setting with $d = 1$.
Thus, the proof technique based on the framework of Balakrishnan et
al.~\cite{Siva_2017} appears to be non-optimal.  The sub-optimality of
this approach necessitates the development of a more refined
technique. Before sketching this technique, we now quantify
empirically the convergence rate of sample EM in terms of both
dimension $\dims$ and sample size $\obs$ for the balanced mixture fit.
In Figure~\ref{fig:sample_balanced_rates}, we summarize the results of
these experiments. The two panels in the figure exhibit that the error
in the sample EM estimate scales as $(d/n)^{\frac{1}{4}}$, thereby
providing further numerical evidence that the preceding approach
indeed led to a sub-optimal result.
\begin{figure}[h]
  \begin{center}
    \begin{tabular}{cc}
\widgraph{0.45\textwidth}
         {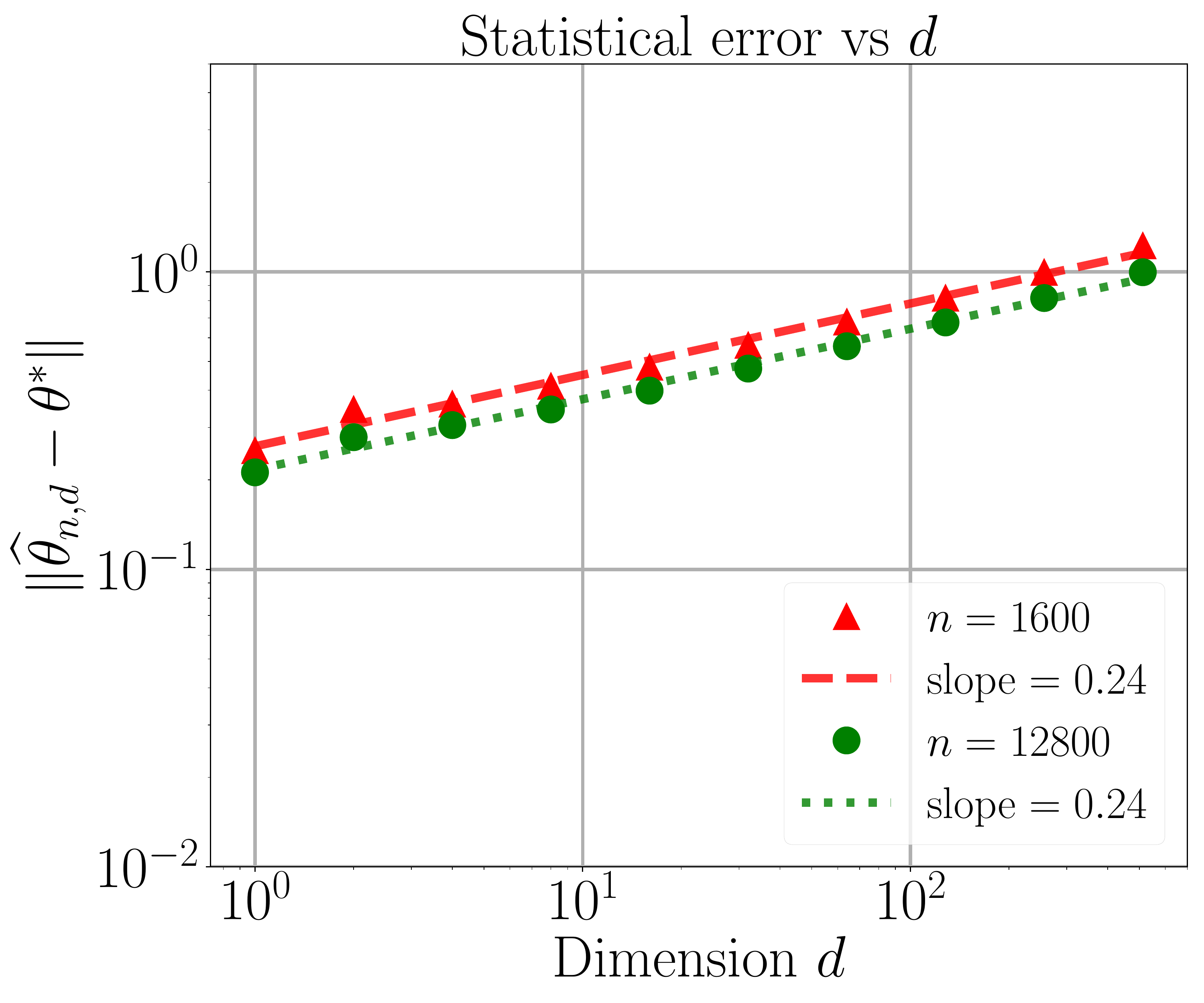}
         & \widgraph{0.45\textwidth}
         {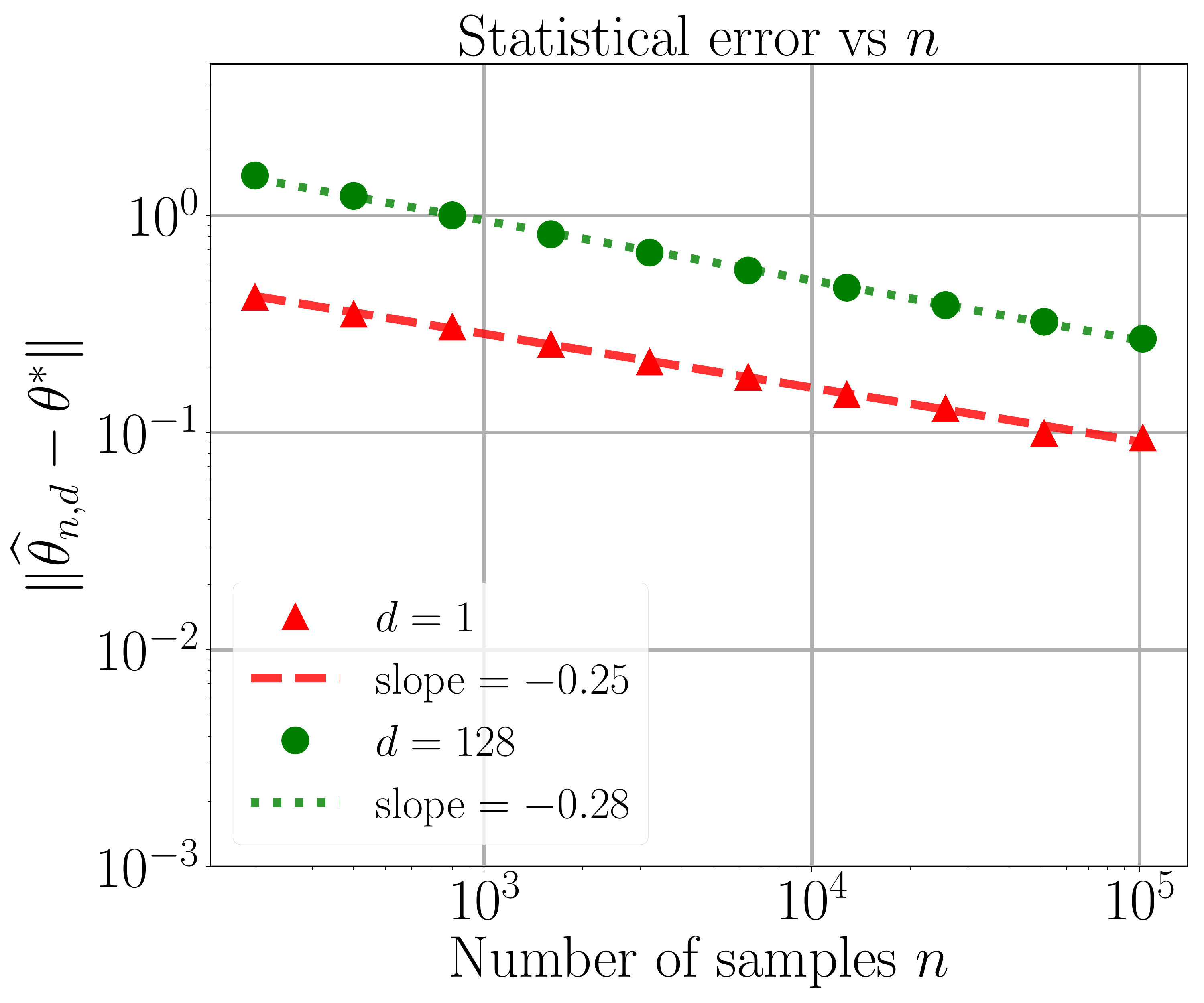}
         \\
         (a) & (b)
    \end{tabular}
 \caption{
Scaling of the Euclidean error $\|\widehat \theta_{n,d} -
      \thetastar\|_2$ for EM estimates $\widehat \theta_{n,d}$
      computed using the balanced ($\weight=\frac{1}{2}$) mixture-fit~\eqref{EqModelFit}.
      Here the true data distribution is $\NORMAL(0, I_d)$, 
      i.e., $\thetastar=0$, and
      $\widehat\theta_{n, d}$ denotes the EM iterate upon convergence
      when we fit a balanced mixture with $n$ samples in $d$ dimensions.
      (a)~Scaling with respect to $\dims$ for $\obs \in
      \braces{1600, 12800}$.  (b)~Scaling with respect to $\obs$ for
      $\dims \in \braces{1, 128}$. We ran experiments for several
      other pairs of $(n, d)$ and the conclusions were the same.  Clearly, the
   empirical results suggest a scaling of order $(\dims/\obs)^{\frac{1}{4}}$
   for the final iterate of sample-based EM. }
 \label{fig:sample_balanced_rates}
  \end{center}
\end{figure}

\begin{figure}[t]
	\begin{center}
		\begin{tabular}{c|c}
			\widgraph{0.48\textwidth}
			{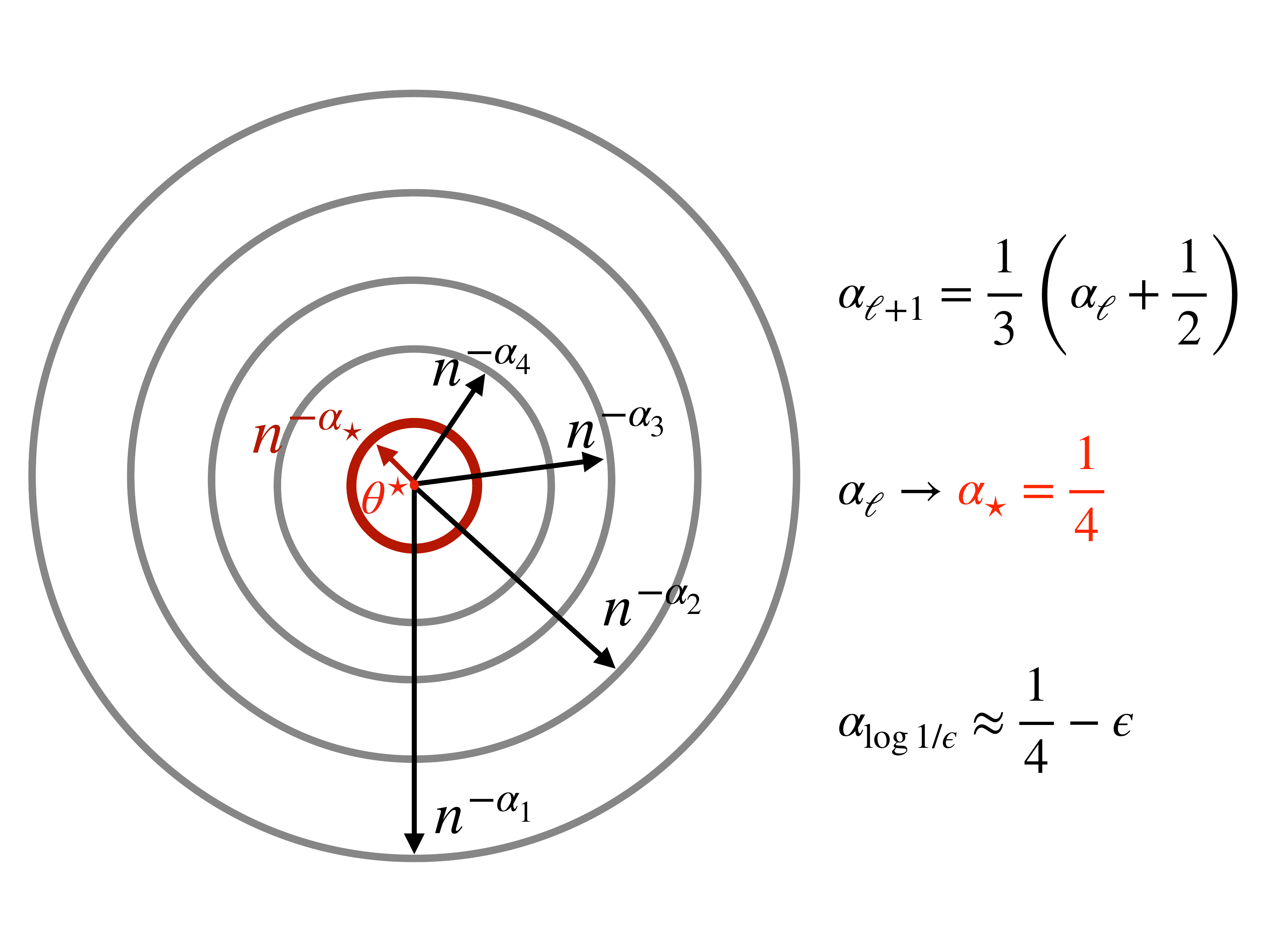}
			&
			\widgraph{0.48\textwidth}
			{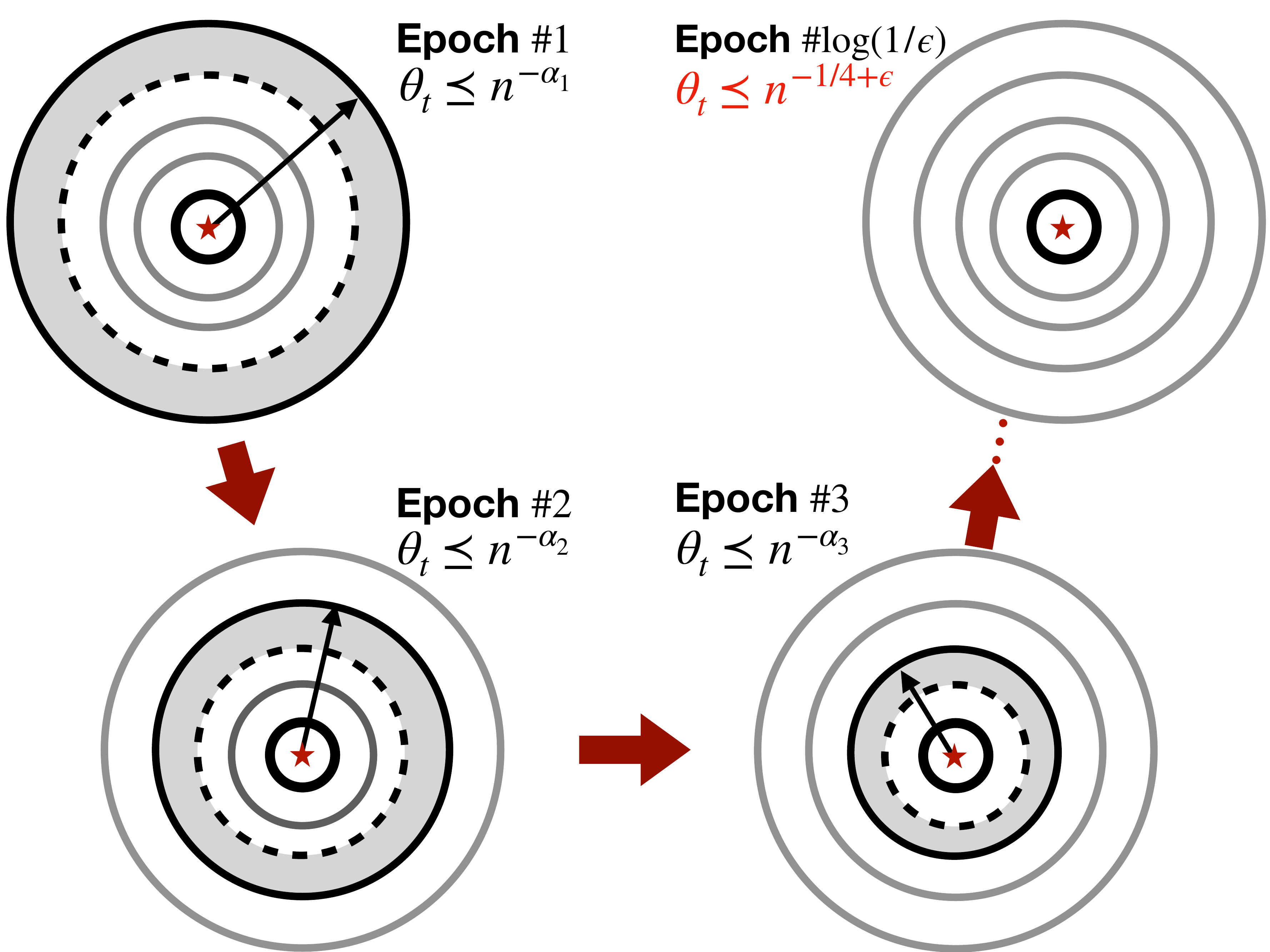}\\
			(a) & (b)
		\end{tabular}
		\caption{Illustration of the annulus-based-localization argument part
    (I):
    Defining the epochs or equivalently the annuli.
			(a) Outer radius for the $\ind$-th epoch is given by $n^{-\seqalpha{\ind}}$
			(tracking dependency only on $\obs$).
			(b) For any given epoch $\ind$, we analyze the behavior of the EM 
			sequence  $\theta^{t+1}=M_n(\theta^t)$, when $\theta^t$ lies in the 
			annulus around $\thetastar$ with inner and outer radii given by 
			$n^{-\seqalpha{\ind+1}}$, and $n^{-\seqalpha{\ind}}$, respectively.
			We prove that EM iterates move from one epoch to the next epoch (e.g. 
			epoch $\ell$ to epoch $\ell + 1$) after at most $\sqrt{n}$ iterations.
			Given the definition of $\seqalpha{\ind}$, we see that the 
			inner and outer radii of the aforementioned annulus converges linearly to
			$n^{-\frac{1}{4}}$. Consequently, after at most
			$\log(1/\powerr)$ epochs (or $\sqrt{n}\log(1/\powerr)$ 
			iterations), the EM iterate lies in a ball of radius
			$n^{-{1}/{4}+\powerr}$ around $\thetastar$.
			We illustrate the one-step dynamics in any given annulus
			in Figure~\ref{fig:illus_local_arg_one_epoch}.
		}
		\label{fig:illus_local_arg}
	\end{center}
\end{figure}
\begin{figure}[h]
	\begin{center}
		\begin{tabular}{c}
			\widgraph{.60\textwidth}{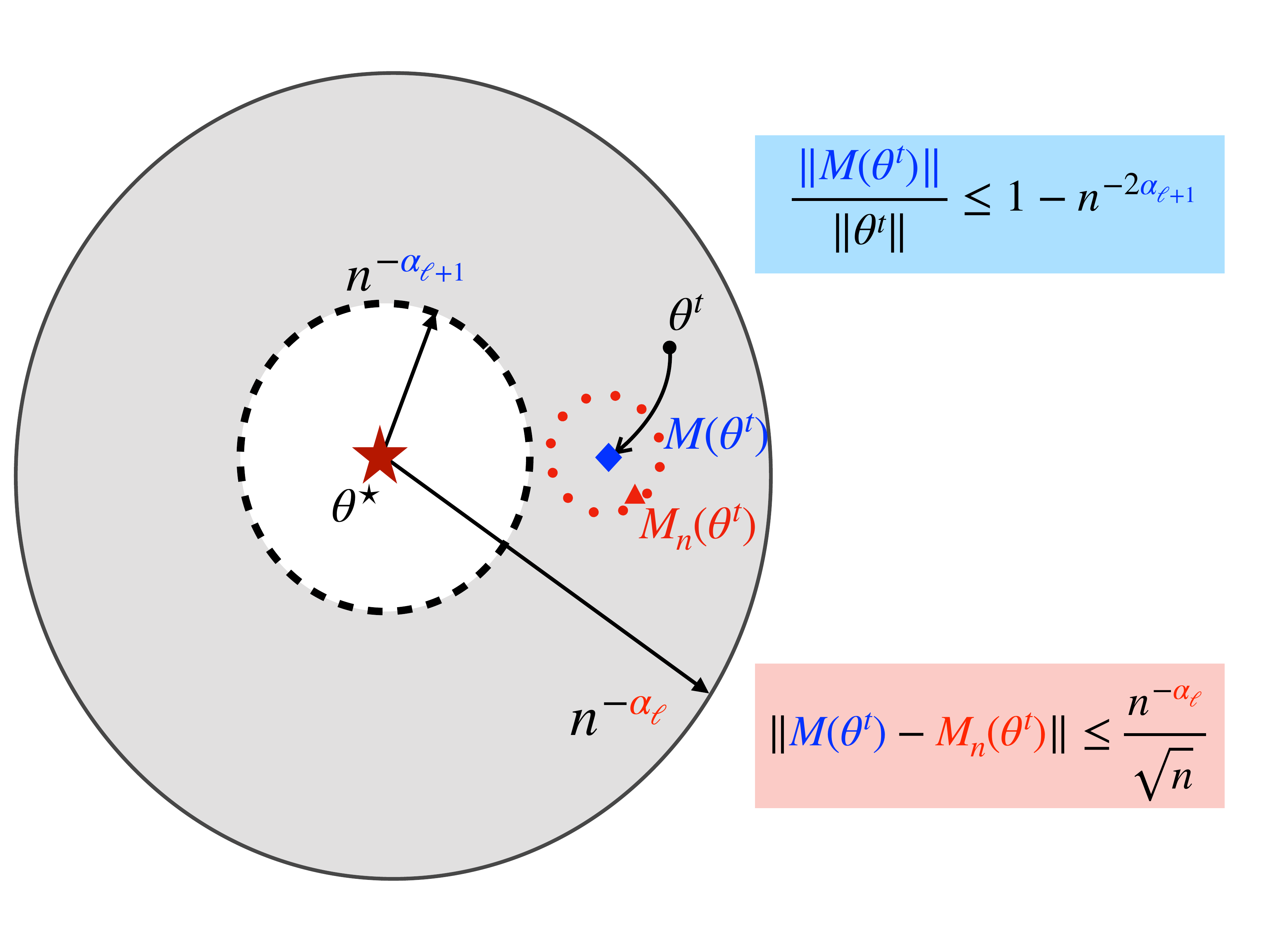}
		\end{tabular}
		\caption{Illustration of the annulus-based-localization argument part
    (II):
      Dynamics of EM in the $\ind$-th epoch or equivalently the annulus
      $\obs^{-\alpha_{\ell+1}}\leq \|\theta^t-\thetastar\|_2 \leq \obs^
      {-\alpha_\ell}.$
			For a given epoch $\ind$, we analyze the behavior of the EM 
			sequence  $\theta^{t+1}=M_n(\theta^t)$, when $\theta^t$ lies in the 
			annulus with inner and outer radii given by $n^{-\seqalpha{\ind+1}}$,
      and $n^{-\seqalpha{\ind}}$, respectively.
			In this epoch, the population EM operator $\updateM(\theta^t)$
			contracts with a contraction coefficient that depends on 
			$n^{-\alpha_{\ell + 1}}$, which is the inner
			radius of the disc, while the perturbation error
			${\|\updateM_\obs(\theta^t) - \updateM(\theta)\|_2}$
			between the sample and population EM operators
			depends on $n^{-\alpha_\ell}$, which is the outer radius of the disc.
			Overall, we prove that $M_n$ is non-expansive and after at most
			$\sqrt{n}$ steps, the sample EM updates move from epoch $\ind$ to
			epoch $\ind+1$.
		}
		\label{fig:illus_local_arg_one_epoch}
	\end{center}
\end{figure}


\subsection{Annulus-based localization over epochs}
\label{SecLocalize}

Let us try to understand why the preceding argument led to a
sub-optimal bound. In brief, its ``one-shot'' nature contains two
major deficiencies. First, the tolerance parameter $\smallradius$ is
used both (a) for measuring the contractivity of the updates, as in
the first inequality in equation~\eqref{EqnSivaConditions}, \emph{and}
(b) for determining the final accuracy that we achieve.  At earlier
phases of the iteration, the algorithm will converge \emph{more
  quickly} than suggested by the worst-case analysis based on the
final accuracy.  A second deficiency is that the argument uses the
radius $r$ only once, setting it to a constant to reflect the
initialization $\theta^0$ at the start of the algorithm.  This means
that we failed to ``localize'' our bound on the empirical process in
Lemma~\ref{lemma:bound_EM_operators}.  At later iterations of the
algorithm, the norm $\enorm{\theta^t}$ will be smaller, meaning that
the empirical process can be more tightly controlled.  We note that
ideas of localizing the radius $r$ for an empirical process plays a
crucial role in obtaining sharp bounds on the error of $M$-estimation
procedures~\cite{Vandegeer-2000, Bar05, Kolt06, Wai19}.

A novel aspect of the localization argument in our setting is the use
of an annulus instead of a ball.  In particular, we analyze the
iterates from the EM algorithm assuming that they lie within a
pre-specicied annulus, defined by an inner and an outer radius. On one
hand, the outer radius of the annulus helps to provide a sharp control
on the perturbation bounds between the population and sample
operators.  On the other hand, the inner radius of the annulus is used
to tightly control the algorithmic rate of convergence.

We now summarize our key arguments. The entire sequence of sample EM
iterations is broken up into a sequence of different epochs.  During
each epoch, we localize the EM iterates to an annulus.  In more
detail:
\begin{itemize}
\item We index epochs by the integer $\ell = 0, 1, 2, \ldots$, and
  associate them with a sequence $\{\alpha_\ell\}_{\ell \geq 0}$ of
  scalars in the interval $[0, \frac{1}{4}]$.  The input to epoch
  $\ell$ is the scalar $\alpha_\ell$, and the output from epoch $\ell$
  is the scalar $\alpha_{\ell + 1}$.
\item The $\ell$-th epoch is defined to be the set of all iterations
  $t$ of the sample EM algorithm such that the sample EM iterate
  $\theta^t$ lies in the following annulus:
  \begin{align}
    \left(\frac{\dims}{\obs}\right)^{\alpha_{\ell+1}}\leq
  \|\theta^t-\thetastar\|_2 \leq \left(\frac{\dims}{\obs} \right)^{\alpha_\ell}.
  \end{align}
  We establish that the sample-EM operator is non-expansive so that
  each epoch is well-defined (and that subsequent iterations can only
  correspond to subsequent epochs).
  \item Upon completion of epoch $\ell$ at iteration $T_\ell$, the EM
    algorithm returns an estimate $\theta^{T_\ell}$ such that
    $\|\theta^{T_\ell}\|_2 \precsim (d/n)^{\alpha_{\ell+1}}$, where
    \begin{align}
\label{EqnKeyRecursion}
\alpha_{\ell+1} = \frac{1}{3} \alpha_{\ell} + \frac{1}{6}.
    \end{align}
Note that the new scalar $\alpha_{\ell + 1}$ serves as the input to
epoch $\ell + 1$.
\end{itemize}
The recursion~\eqref{EqnKeyRecursion} is crucial in our analysis: it
tracks the evolution of the exponent acting upon the ratio $d/n$, and
the rate $(d/n)^{\alpha_{\ell+1}}$ is the bound on the Euclidean norm
of the sample EM iterates achieved at the end of \mbox{epoch $\ell$.}

A few properties of the recursion~\eqref{EqnKeyRecursion} are worth
noting.  First, given our initialization $\alpha_0 = 0$, we see that
$\alpha_1 = \frac{1}{6}$, which agrees with the outcome of our
one-step analysis from above.  Second, as the recursion is iterated,
it converges from below to the fixed point $\alpha^* = \frac{1}{4}$.
Thus, our argument will allow us to prove a bound arbitrarily close to
$(d/n)^{\frac{1}{4}}$, as stated formally in
Theorem~\ref{theorem:convergence_rate_sample_EM} to follow.  Refer to
Figures~\ref{fig:illus_local_arg} and
\ref{fig:illus_local_arg_one_epoch} for an illustration of the
definition of these annuli, epochs and the associated conclusions.


\subsection{How does the key recursion~\eqref{EqnKeyRecursion} arise?}
\label{SecSubKeyRecursion}

Let us now sketch out how the key recursion~\eqref{EqnKeyRecursion}
arises.  Consider epoch $\ind$ specified by input \mbox{$\alpha_{\ind} <
\frac{1}{4}$}, and consider an iterate $\theta^t$ in the following annulus:
$ \enorm{\theta^t} \in [(\dims/\obs)^{\alpha_{\ind+1}}, (\dims/\obs)^{\alpha_
{\ind}}]$.
We begin by proving that this initial condition ensures that
$\enorm{\theta^t}$ is less than level $(\dims/\obs)^{\alpha_{\ind}}$
for all future iterations; for details, see Lemma~\ref{LemNonExpansive} stated in the
Appendix.  Given this guarantee, our second step is to make use of the
inner radius of the considered annulus to apply
Theorem~\ref{thm:pop_over} for the population EM operator, for all
iterations $t$ such that $\enorm{\theta^t} \geq
(\dims/\obs)^{\alpha_{\ind+1}}$.  Consequently, for these iterations,
we have
\begin{subequations}
  \begin{align}
    \label{EqnTwinA}
\enorm{\updateM(\theta^t)} & \leq \Big( 1 - p + \frac{p}{1 +
  \frac{\enorm{\theta}^2}{2\sd^2}}\Big) \enorm{\theta^t} \nonumber \\
   & \precsim (1
-(\dims/\obs)^{2 \alpha_{\ind + 1}}) (\dims/\obs)^{\alpha_{\ind}} \leq
\gammamin \; \left( \frac{\dims}{\obs} \right)^{\alpha_{\ind}},
\end{align}
where $\gammamin \defn e^{- (\dims/\obs)^{2\alpha_{\ind + 1}}}$.  On the other
hand, using the outer radii of the annulus and applying Lemma~\ref{lemma:bound_EM_operators}
for this epoch, we obtain that
\begin{align}
  \label{EqnTwinB}
  \enorm{\updateM_\obs(\theta^t)-\updateM(\theta^t)} \precsim
  \parenth{\frac{\dims}{\obs}}^{\alpha_{\ind}} \sqrt{\frac{\dims}{\obs}} =
  \parenth{\frac{\dims}{\obs}}^{\alpha_{\ind} + 1/2},
\end{align}
\end{subequations}
for all $t$ in the epoch.  Unfolding the basic triangle
inequality~\eqref{EqnBreak} for $T$ steps, we find that
\begin{align*}
\enorm{\theta^{t + T}} & \leq \enorm{\updateM_\obs(\theta^t) -
    \updateM(\theta^t)} (1 + \gammamin + \ldots + \gammamin^{T -
    1}) + \gammamin^T \enorm{\theta_t} \\
& \leq \frac{1}{1 - \gammamin} \enorm{\updateM_\obs(\theta^t) -
  \updateM(\theta^t)} + e^{-T(\dims/\obs)^{2\alpha_{\ind + 1}}}
(\dims/\obs)^{\alpha_{\ind}}.
\end{align*}
The second term decays exponentially in $T$, and our analysis shows
that it is dominated by the first term in the relevant regime of
analysis.  Examining the first term, we find that $\theta^{t+T}$ has
Euclidean norm of the order
\begin{align}
\label{EqnRDefn}
\enorm{\theta^{t+T}} & \precsim \frac{1}{1 - \gammamin}
\enorm{\updateM_\obs(\theta^t) - \updateM(\theta^t)} \approx
\underbrace{ \parenth{\frac{\dims}{\obs}}^{- 2\alpha_{\ind + 1}}
  \parenth{\frac{\dims}{\obs}}^{\alpha_{\ind} + 1/2}}_{ = \,: \; r}.
\end{align}
The epoch is said to be complete once $\enorm{\theta^{t+T}} \precsim
\left(\frac{\dims}{\obs} \right)^{\alpha_{\ind + 1}}$.  Disregarding constants,
this condition is satisfied when $r = \left(\frac{\dims}{\obs}
\right)^{\alpha_{\ind + 1}}$, or equivalently when
\begin{align*}
\parenth{\frac{\dims}{\obs}}^{- 2\alpha_{\ind + 1}}
\parenth{\frac{\dims}{\obs}}^{\alpha_{\ind} + 1/2} & =
\left(\frac{\dims}{\obs} \right)^{\alpha_{\ind + 1}}.
\end{align*}
Viewing this equation as a function of the pair $(\alpha_{\ind + 1},
\alpha_{\ind})$ and solving for $\alpha_{\ind + 1}$ in terms of
$\alpha_{\ind}$ yields the recursion~\eqref{EqnKeyRecursion}. Refer
to Figure~\ref{fig:illus_local_arg_one_epoch}
for a visual illustration of the localization argument summarized above
for a given epoch.

Of course, the preceding discussion is informal, and there remain many
details to be addressed in order to obtain a formal proof.  We refer
the reader to Appendix~\ref{sub:proof_of_theorem_thmsampleoverfit} for
the complete argument.


\section{Generality of results and future work} 
\label{sec:discussion}

Thus far, we have characterized the behavior of the EM algorithm for
different settings of over-specified location Gaussian mixtures. We
established rigorous statistical guarantees of EM under two particular
but representative settings of over-specified location Gaussian
mixtures: the balanced and unbalanced mixture-fit. The log-likelihood
for the unbalanced fit remains strongly log-concave\footnote{Moreover,
  in
  Appendix~\ref{sec:unbalanced_vs_balanced_fits_closer_look_at_log_likelihood}
  we differentiate the unbalanced and balanced fit based on the
  log-likelihood and the Fisher matrix and provide a heuristic
  justification for the different rates between the two cases.} (due
to the fixed weights and location parameters being sign flips) and
hence the Euclidean error of the final iterate of EM decays at the
usual rate $(d/n)^{\frac{1}{2}}$ with $n$ samples in $d$ dimensions. However,
in the balanced case, the log-likelihood is no longer strongly
log-concave and the error decays at the slower rate
$(d/n)^{\frac{1}{4}}$. We view our results as the first step in
understanding and possibly improving the EM algorithm in non-regular
settings. We now provide a detailed discussion that sheds light on the
general applicability of our results. In particular, we discuss the behavior
of EM under the following settings:
(i) over-specified mixture models with unknown weight parameters 
(Section~\ref{sub:when_the_weights_are_unknown}),
(ii) over-specified mixture of linear regression (Section~\ref{sub:slow_rates_for_mixture_of_regressions}),
and  (iii) more general settings with over-specified mixture models (Section~
\ref{sub:slow_rates_for_general_mixtures}).
We conclude the paper with a discussion of  several future
directions that arise from the previous settings in Section~\ref{sub:future_directions}.


\subsection{When the weights are unknown} 
\label{sub:when_the_weights_are_unknown}

Our theoretical analysis so far assumed that the weights were fixed,
an assumption common to a number of previous papers in the
area~\cite{Siva_2017,Daskalakis_colt2017,kumar_nips2017}.  In
Appendix~\ref{sec:sunknown_mixtures}, we consider the case of unknown
weights for the model fit~\eqref{EqModelFitNew}.  In this context, our
main contribution is to show that if the weights are initialized far
away from $\frac{1} {2}$---meaning that the initial mixture is highly
unbalanced---then the EM algorithm converges quickly, and the results
from Theorem~\ref{ThmUnbalanced} are valid.  (See
Lemma~\ref{lemma:contraction_pop} in
Appendix~\ref{sec:sunknown_mixtures} for the details.)  
On the other hand, if the initial mixture is not heavily imbalanced, we
observe the slow convergence of EM consistent with Theorems~\ref{thm:pop_over}
and \ref{theorem:convergence_rate_sample_EM}.

\subsection{Slow rates for mixture of regressions} 
\label{sub:slow_rates_for_mixture_of_regressions}

Thus far, we have considered the behavior of the EM algorithm in
application to parameter estimation in mixture models.  
Our findings turn out to hold somewhat
more generally, with Theorems~\ref{thm:pop_over}
and~\ref{theorem:convergence_rate_sample_EM} having analogues when the
EM algorithm is used to fit a mixture of linear regressions in
over-specified settings.  Concretely, suppose that $(Y_{1}, X_{1}),
\ldots, (Y_{n}, X_ {n}) \in \Rspace \times \Rspace^{d}$ are
i.i.d. samples generated from the model
\begin{align}
\label{eq:truemodel_regression}
Y_{i} = X_{i}^{\top} \thetastar + \sigma \noise_{i}, \qquad
\mbox{for $i = 1, \ldots, n$,}
\end{align} 
where $\{\noise_{i}\}_{i=1}^{n}$ are i.i.d. standard Gaussian
variates, and the covariate vectors $X_{i} \in \real^d$ are also
i.i.d. samples from the standard multivariate Gaussian $\NORMAL(0,
I_{d})$.  Of interest is to estimate the parameter $\thetastar$ using
these samples and EM is a popular method for doing so.  When
$\thetastar$ has sufficiently large Euclidean norm, a setting referred
to as the strong signal case, Balakrishnan~\etal~\cite{Siva_2017}
showed that the estimate returned by EM is at a distance
$(d/n)^{\frac{1}{2}}$ from the true parameter $\thetastar$ with high
probability. On the other hand, our analysis shows that when
$\|\thetastar\|_2$ decays to zero---leading to an over-specified
setting---the convergence of EM becomes slow. In particular, the EM
algorithm takes significantly more steps and returns an estimate that
is statistically worse, lying at Euclidean distance of the order
$(d/n)^{\frac{1}{4}}$ from the true parameter. While the EM operators
in this case are slightly different when compared to the over-specified
Gaussian mixture analyzed before, the proof techniques remain similar.
More concretely, we first show that the convergence of population EM is
slow (similar to Theorem~\ref{thm:pop_over}) and then use the
annulus-based localization argument (similar to the proof of
Theorem~\ref{theorem:convergence_rate_sample_EM} from
Section~\ref{sec:new_techniques_for_analyzing_sample_em}) to derive a
sharp rate. For completeness, we present these results formally in
Lemma~\ref{lemma:mix_regress_pop} and
Corollary~\ref{cor:mix_regression_rate} in
Appendix~\ref{sec:mixture_regression_EM}.


\subsection{Slow rates for general mixtures} 
\label{sub:slow_rates_for_general_mixtures}

We now present several experiments that provide numerical backing to
the claim that the slow rate of order $\obs^{-\frac{1} {4}}$ is not
merely an artifact of the special balanced fit (\eqref{EqModelFitNew} with
$\weight=\frac{1}{2}$). We demonstrate that the slow convergence of EM is
very likely to arise while fitting general over-specified location Gaussian
mixtures with unknown weights (and known covariance). 
We consider three settings: (A) fitting several general over-specified location
Gaussian mixture fits to Gaussian data (Figure~\ref{fig:more_cases}), (B)
fitting a special three-component mixture fit to a two mixture of Gaussians (Figure~\ref{fig:two_mixture}), and
(C) fitting mixtures with unknown weights and location parameters when
the number of components in the fitted model is over-specified by two (Figure~
\ref{fig:more_mixtures}). We now turn to the details of these settings.

\paragraph{General over-specified mixture fits on Gaussian data} 
First, we remark that the fast convergence in the
unbalanced fit (Theorem~\ref{ThmUnbalanced}) 
was a joint result of the facts that (a) the weights were
fixed and unequal, and (b) the parameters were constrained to be a sign
flip. If either of these conditions is violated, the EM algorithm
exhibits slow convergence on both algorithmic and statistical
fronts. Theorems~\ref{thm:pop_over}, \ref{theorem:convergence_rate_sample_EM}
 and \ref{theorem:sample_lower_bound} provide rigorous details for the case
 of equal and fixed weights (balanced fit).
When the weights are unknown, EM can exhibit slow rate (see Section~\ref{sub:when_the_weights_are_unknown}
and Appendix~\ref{sec:sunknown_mixtures} for further details).
When the weights are fixed and unequal, but the
location parameters are estimated freely---that is, with the model
$\weight \normDensity(x; \theta_1, 1) + (1-\weight) \normDensity(x;
\theta_2, 1)$, as illustrated in Figure~\ref{fig:more_cases}(a)---then
the EM estimates have error\footnote{For more general cases, we
  measure the error of parameter estimation using the Wasserstein
  metric of second order $\widehat{W}_{2, n}$ to account for
  label-switching between the components. When the true model is
  standard Gaussian this metric is simply the weighted Euclidean
  error: $(\sum\weight_ {k}\widehat\theta_{k, n}^2)^{\frac{1}{2}}$, where
  $\weight_k$ and $\widehat\theta_ {k, n}$, respectively, denote the
  mixture weight and the location parameter of the $k$-th component of
  the mixture.}  of order $\obs^{-\frac{1}{4}}$.  In such cases, the
parameter estimates approximately satisfy the relation $\sum_k
\weight_{k}\widehat\theta_{k, n} \approx 0$, since the mean of the
data is close to zero; moreover, for a two-components mixture model,
the location estimates become weighted sign flips of each other.  The
features are the intuitive reason underlying the similarity of
behavior of EM between this fit and the balanced fit. Finally, when
we fit a two mixture model with unknown weight parameter and free location
parameters, the final error also has a scaling of order $\obs^{-\frac 14}$.
Refer to Figure~\ref{fig:more_cases} for a numerical validation of these
results.

\begin{figure}[h]
  \begin{center}
    \begin{tabular}{cc}
    \widgraph{0.45\textwidth}{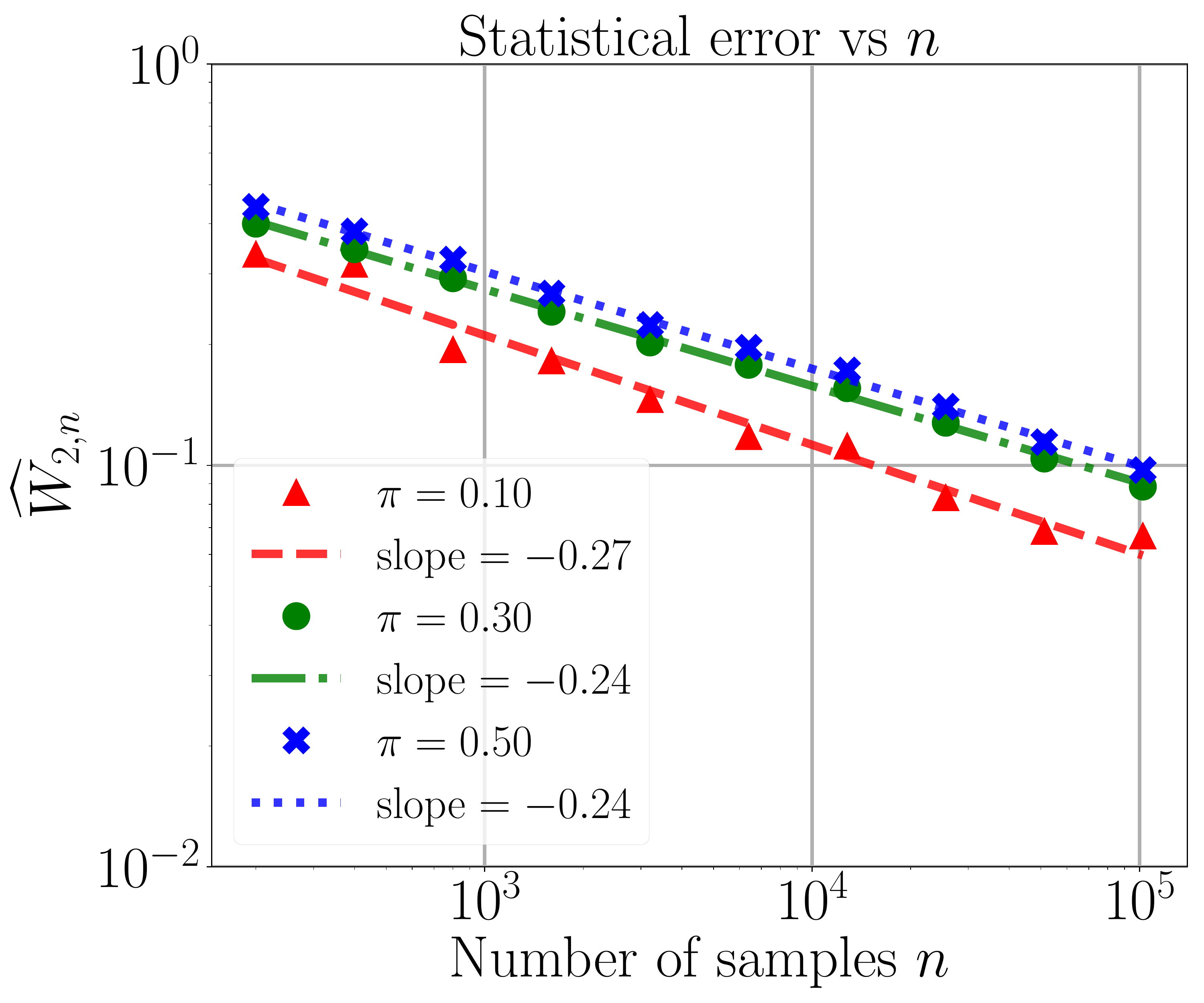} &
    \widgraph{0.45\textwidth}{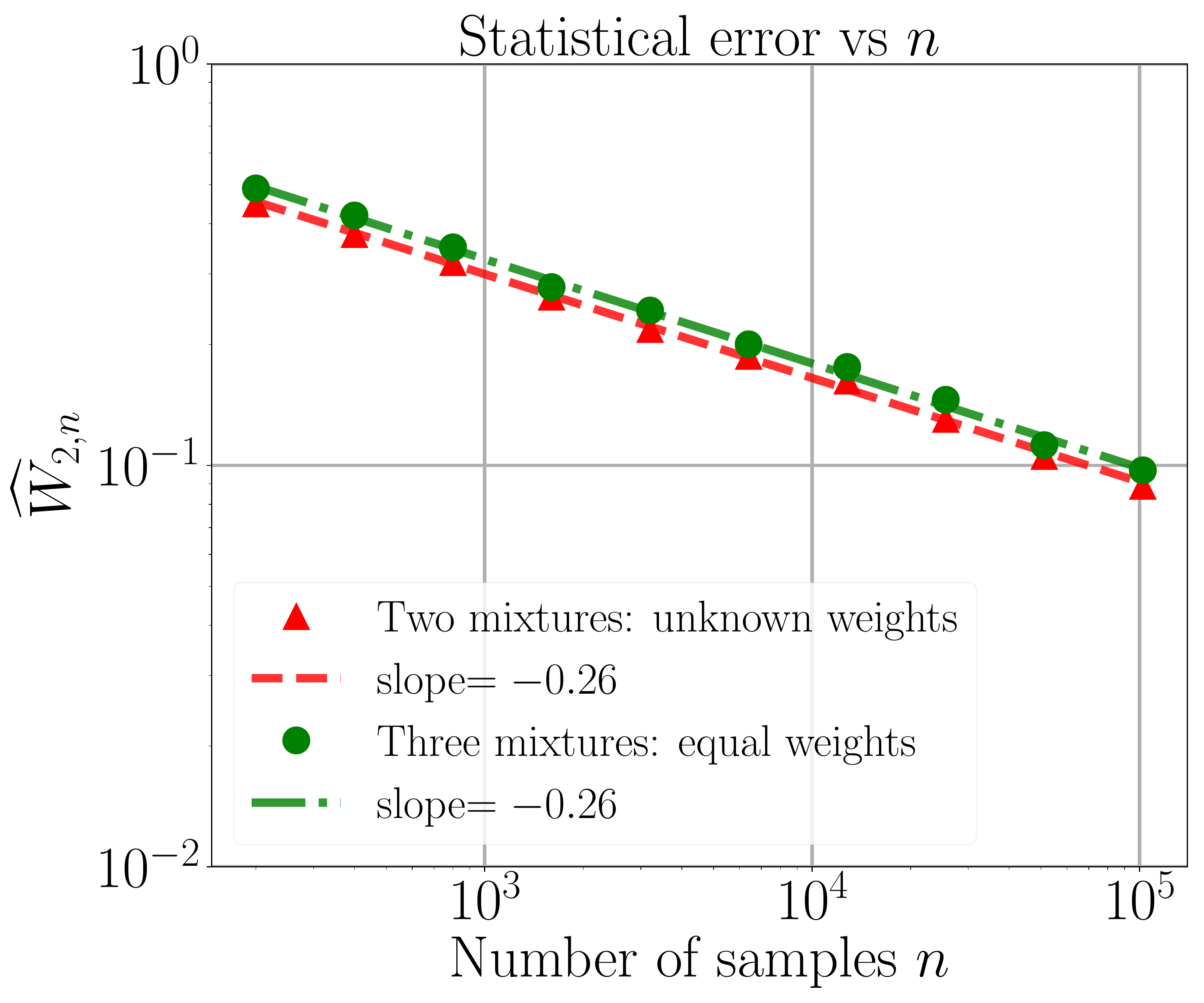} \\
    (a) & (b)
    \end{tabular}
    \caption{Plots of the Wasserstein error~$\widehat W_{2, n}$
      associated with EM fixed points versus the sample size for
      fitting various kinds of location mixture models to standard
      normal $\NORMAL(0, 1)$ data.  We fit mixture models with either
      two or three components, with all location parameters estimated
      in an unconstrained manner.  The lines are obtained by a linear
      regression of the log error on the sample size $n$.  (a) Fitting
      a two-mixture model $\weight\NORMAL(\theta_1, 1) +
      (1-\weight)\NORMAL(\theta_2, 1)$ with three different
      \emph{fixed} values of weights $\weight \in \braces{0.1, 0.3,
        0.5}$ and two (unconstrained) location parameters, along with
      least-squares fits to the log errors. (b) Data plotted as red
      triangles is obtained by fitting a two-component model with
      \emph{unknown} mixture weights and two location parameters
      $\weight\NORMAL(\theta_1, 1) + (1-\weight)\NORMAL(\theta_2, 1)$,
      whereas green circles correspond to results fitting a
      three-component mixture model
      $\sum_{i=1}^3\frac{1}{3}\NORMAL(\theta_i, 1)$.  In all cases,
      the EM solutions exhibit the slow $n^{-\frac{1}{4}}$ statistical
      rate for the error in parameter estimation. Also see
      Figure~\ref{fig:more_mixtures}.}
\label{fig:more_cases}
  \end{center}
\end{figure}

\paragraph{Over-specified fits for mixtures of Gaussian
data} 
\label{par:slow_rates_with_more_than_two_mixtures}
Using similar reasoning as above, let us sketch out how our
theoretical results also yield usable predictions for more general
over-specified models.  Roughly speaking, whenever there are extra
number of components to be estimated, parameters of some of them
are likely to end up satisfying certain form of local constraint.  
More concretely, suppose that we are given data generated from a
$k$-component mixture, and we use the EM algorithm to fit the
location parameters of a mixture model with $k + 1$ components.  
Loosely speaking, the EM estimates corresponding to a set of 
$k-1$ components are likely to converge
quickly, leaving the two remaining components to fit a single component in
the true model.  If the other components are far away, the EM updates for
the parameters of these two components are unaffected by them and start
to behave like the balanced case. See Figure~\ref{fig:two_mixture} for a
numerical illustration of this intuition in an idealized setting where
we use $k+1 = 3$ components to fit data generated from a $k= 2$ component
model.  In this idealized setting, the error for one of the parameter
scales at the fast rate of order $n^{-\frac 12}$, and that of the parameter
that is locally over-fitted exhibits a slow rate of order $n^{-\frac 14}$.
Finally, we see that the statistical error of order $n^{-\frac{1}{4}}$ also
arises when we over-specify the number of components by more than one. In
particular, we observe in Figure~\ref{fig:more_cases}(b) (green dashed dotted
line with solid circles) and Figure~\ref{fig:more_mixtures} (both curves)
that
a similar scaling of order $\obs^{-\frac{1}{4}}$ arises when we
over-specify the number of components by $2$ and estimate the weight
and location parameters. 

Besides formally analyzing EM in these general cases, several other future directions arise from our work which we now discuss.
\begin{figure}[h]
  \begin{center}
    \begin{tabular}{cc}
    \widgraph{0.45\textwidth}{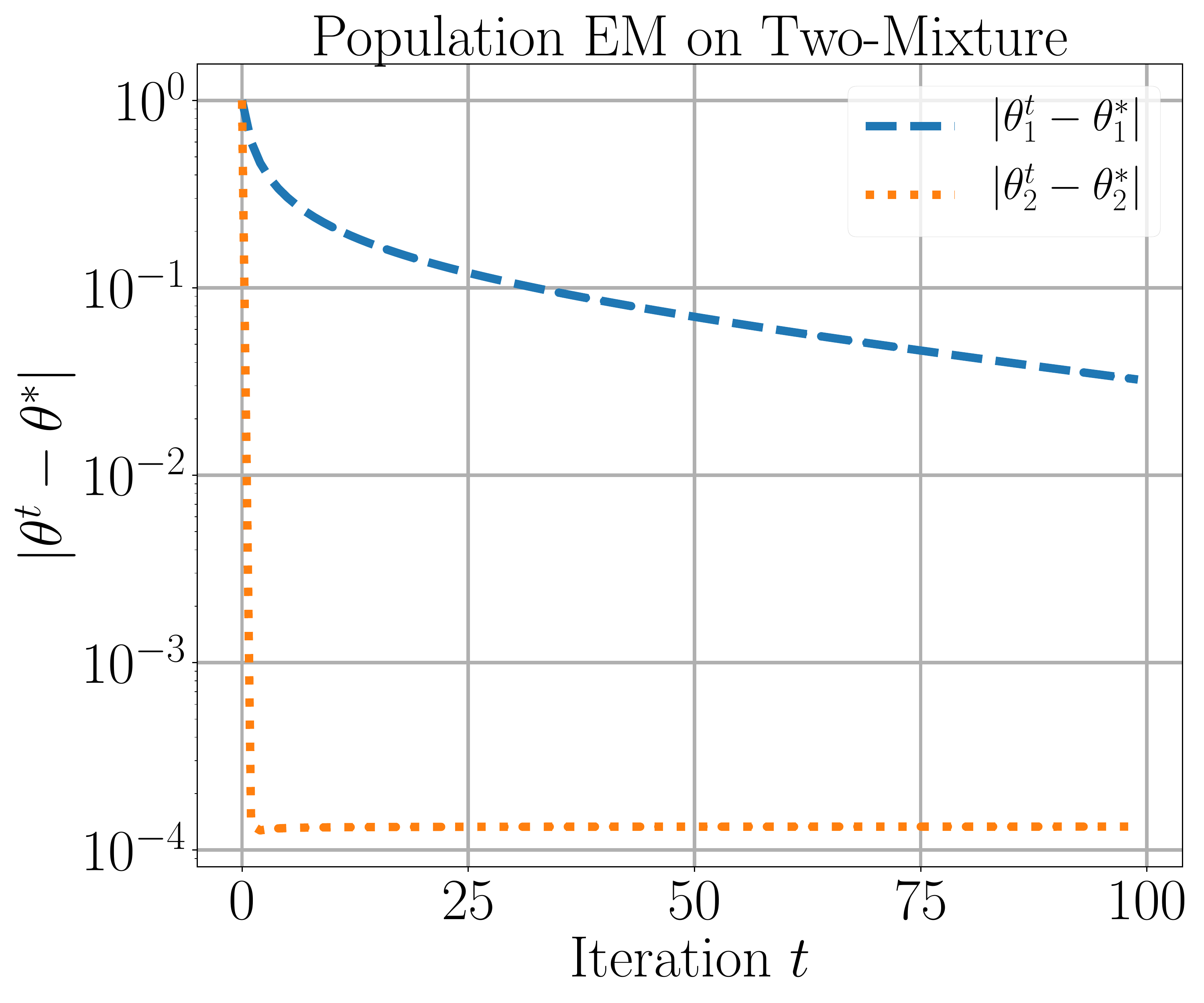} &
    \widgraph{0.45\textwidth}{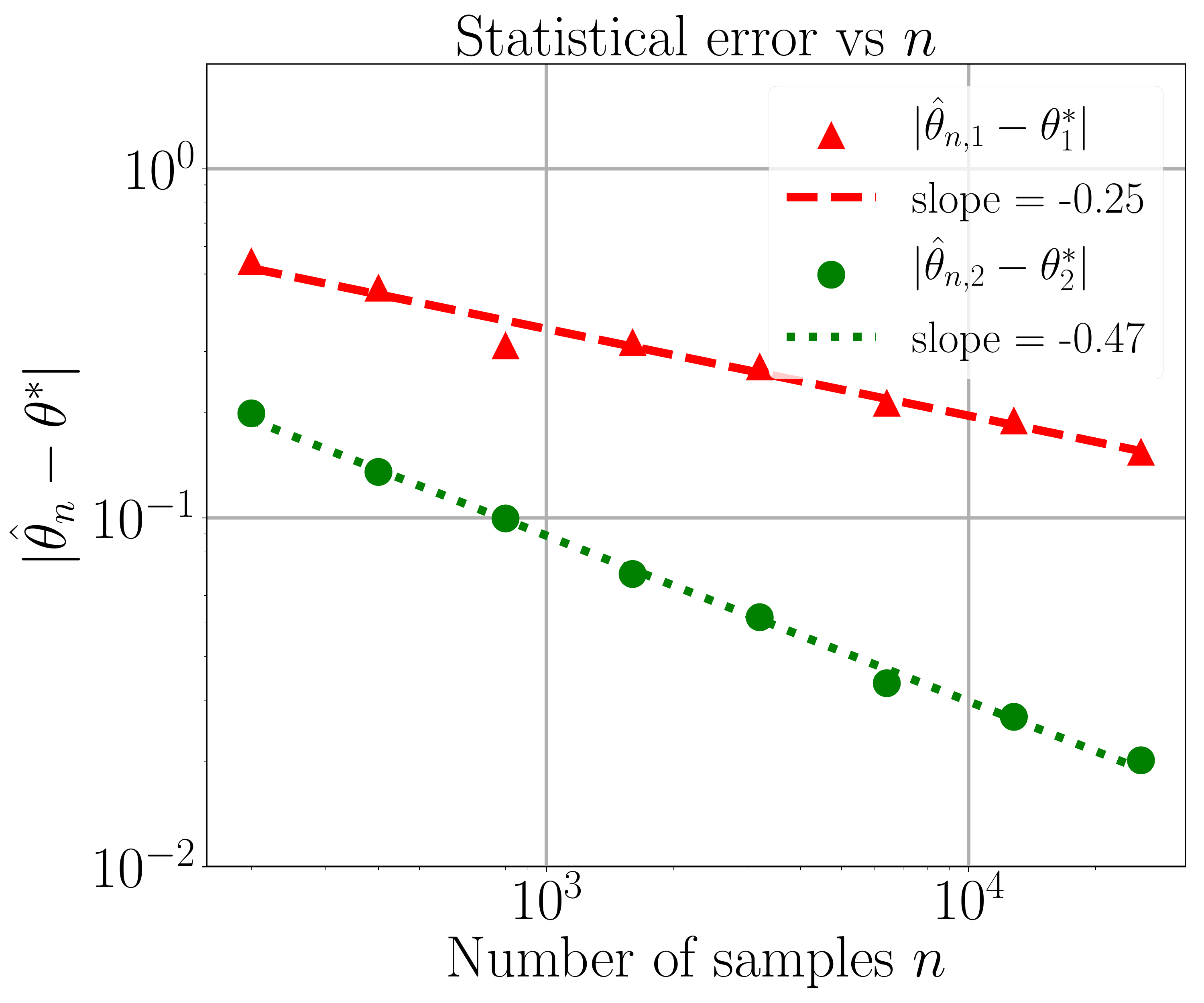} \\
    (a) & (b)
    \end{tabular}
\caption{Behavior of EM for an over-specified Gaussian mixture.  True
  model: $\frac{1}{2}\NORMAL(\thetastar_1, 1) +
  \frac{1}{2}\NORMAL(\thetastar_2,1)$ where $\thetastar_1 = 0 $ and
  $\thetastar_2 = 10$.  We fit a model $\frac{1}{4}
  \mathcal{N}(-\theta_1, 1) + \frac{1}{4} \mathcal{N}(\theta_1, 1)
  +\frac{1}{2}\mathcal{N}(\theta_2, 1)$, where we initialize
  $\theta_1^0$ close to $\thetastar_1$ and $\theta_2^0$ close to
  $\thetastar_2$.  (a)~Population EM updates: We observe that while
  $\theta_1^t$ converges slowly to $\thetastar_1 = 0$, the iterates
  $\theta_2^t$ converge exponentially fast to $\thetastar_2 = 10$.
  (b)~We plot the statistical error for the two parameters.  While the
  strong signal component has a parametric $n^{-\frac{1}{2}}$ rate,
  for the no signal component EM has the slower $n^{-\frac{1}{4}}$
  rate, which is in good agreement with the theoretical results
  derived in the paper.  (We remark that the error floor for
  $\theta_2^t$ in panel (a) arises from the finite precision inherent
  to numerical integration.)}
\label{fig:two_mixture}
  \end{center}
\end{figure}

\begin{figure}
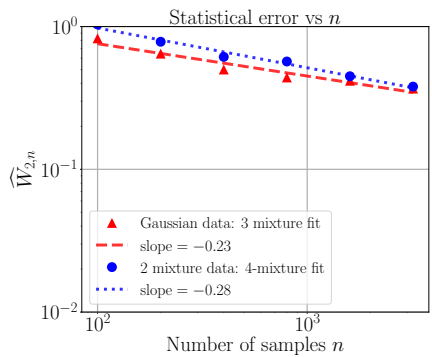

    \begin{center}
        \begin{tabular}{c}
            \widgraph{0.45\textwidth}{fig/wass_scaling_more_mixtures}
        \end{tabular}
    \end{center}
    \caption{Plots of Wasserstein error when both weights and location
      parameters are unknown and estimated using EM and the fitted
      multivariate mixture model is over-specified.  (a) True model:
      $\NORMAL([0, 0]\tp, I_2)$, and fitted model $\sum_{i=1}^3 w_i
      \NORMAL(\theta_i, I_2)$ and (b) True model:
      $\frac{2}{5}\NORMAL([0, 0]\tp, I_2) + \frac{3}{5}\NORMAL ([4,
        4]\tp, I_2)$ and fitted model: $\sum_{i=1}^4 w_i
      \NORMAL(\theta_i, I_2)$. In both cases, once again we see the
      scaling of order $\obs^ {-\frac{1}{4}}$ for the final error
      (similar to results in Figure~\ref{fig:more_cases} and
      \ref{fig:two_mixture}).}
\label{fig:more_mixtures}
\end{figure}

\subsection{Future directions} 
\label{sub:future_directions}
In our current work, we assumed that only the location parameters were
unknown and that the scale parameters of the underlying model are
known. Nevertheless in practice, this assumption is rather restrictive
and it is natural to ask what happens if the scale parameters were
also unknown. We note that the MLE is known to have even slower
statistical rates for the estimation error with such higher-order
mixtures; therefore, it would be interesting to determine if the EM
algorithm also suffers from a similar slow down when the scale
parameters are unknown. We refer the readers to a recent
preprint~\cite{dwivedi2019challenges}, where we establish that the EM
algorithm can suffer from a further slow-down on the statistical and
computational ends when over-specified mixtures are fitted with an
unknown scale parameter.

Another important direction is to analyze the behavior of EM under
different models for generating the data. While our analysis is
focused on Gaussian mixtures, the non-standard statistical rate
$n^{-\frac{1}{4}}$ also arises in other types of over-specified
mixture models, such as those involving mixtures with other
exponential family members, or Student-$t$ distributions, suitable for
heavy-tailed data. We believe that the analysis of our paper can be
generalized to a broader class of finite mixture models that includes
the aforementioned models.

A final direction of interest is whether the behavior of EM---slow
versus fast convergence---can be used as a statistic in a classical
testing problem: testing the simple null of a standard multivariate
Gaussian versus the compound alternative of a two-component Gaussian
mixture.  This problem is known to be challenging due to the
break-down of the (generalized) likelihood ratio test, due the
singularity of the Fisher information matrix; see the
papers~\cite{Chen_2008, Chen_2009} for some past work on the problem.
The results of our paper suggest an alternative approach, which is
based on monitoring the convergence rate of EM.  If the EM algorithm
converges slowly for a balanced fit, then we may accept the null,
whereas the opposite behavior can be used as an evidence for rejecting
the null. We leave for future work the analysis of such a testing procedure
based on the convergence rates of EM.


\section*{Acknowledgments} 
\label{sec:acknowledgments}
This work was partially supported by Office of Naval Research grant
DOD ONR-N00014-18-1-2640 and National Science Foundation grant
NSF-DMS-1612948 to MJW, and National Science Foundation grant  
NSF-DMS-1613002 to BY, and by Army Research Office grant
W911NF-17-1-0304 to MIJ.


\begin{center}
\textbf{\Large{Supplementary material}}
\end{center}

We now collect several proofs and results
that were deferred from the main
paper. Appendices~\ref{sec:proofs_of_main_results}
and~\ref{sec:proofs_of_auxiliary_lemmas} contain, respectively, the
proofs of our main theorems, and the proofs of all auxiliary technical
lemmas.  In Appendix~\ref{sec:proofs_of_additional results}, we
provide some additional results that discuss (a) the behavior of EM when
the weights are unknown, and (b) the initial radius requirement for
EM.
Appendix~\ref{sec:unbalanced_vs_balanced_fits_closer_look_at_log_likelihood}
contains details on the nature of log-likelihood and Fisher
information matrix, in order to provide an intuition about the
different behavior of EM in the balanced and unbalanced case.
Finally, in Appendix~\ref{sec:mixture_regression_EM}, we show that the techniques
presented in this paper are not specific to mixture models, and use them
to establish slow convergence of EM for over-specified mixture of linear
regressions.

\appendix

\section{Proofs of main results} 
\label{sec:proofs_of_main_results}

In this appendix, we present the proofs of our main results, namely
Theorems~\ref{ThmUnbalanced},~\ref{thm:pop_over},~\ref{theorem:convergence_rate_sample_EM}, and~\ref{theorem:sample_lower_bound}.


\subsection{Proof of Theorem~\ref{ThmUnbalanced}} 
\label{sub:proof_of_theorem_thm:ThmUnbalanced}

As alluded to earlier in Section~\ref{sub:unbalanced_mixtures}---given
Lemma~\ref{lemma:bound_EM_operators}---it suffices to prove the contraction
property~\eqref{EqnUnbalancedPop} for the population operator
$\updateM$.  Recall that $\thetastar = 0$ is a fixed point of the
population EM operator (i.e., $\updateM(0) = 0$). This fact, combined
with the definition~\eqref{EqnPopMupdate} of the M-update, yields
\begin{align*}
\enorm{\updateM (\theta)} = \enorm{ \updateM (\theta) -
  \updateM (\thetastar)} = \enorm{\Exs \brackets{2 (\weights_\theta(X) -
    \weights_0(X) ) X}},
\end{align*}
where, in the unbalanced setting~\eqref{EqModelFitNew}, the weight function
$\weights_\theta$~\eqref{EqnWeightFun} and the 
gradient $\gradient_{\theta} \weights_\theta$ take the form
\begin{align*}
\weights_\theta (X) & = \frac{\weight}{\weight + (1 - \weight) e^{-
    \frac{2 \theta^\top X}{\sd^2}}}, \ \mbox{and} \
\gradient_\theta \weights_\theta(X)  = \frac{\frac{2 \weight (1 -
    \weight) X}{\sd^2}}{\left( \weight e^{- \frac{\theta^\top
      X}{\sd^2}} + (1 - \weight) e^{ \frac{\theta^\top X}{\sd^2}}
  \right)^2}.
\end{align*}
For a scalar $u \in [0,1]$, define the function $h(u) = \weights_{u
  \theta} (X)$, and note that $h'(u) = \nabla \weights_{u \theta}
(X)^\top \theta$.  Thus, using a Taylor series expansion along the
line \mbox{$\theta_u = u \theta, u \in [0, 1]$}, we find that
\begin{align}
\enorm{\updateM (\theta)} & = \enorm{\Exs \brackets{2 X
    \int_{0}^1 h'(u) du}} \notag \\
& \hspace{- 2 em} = 4 \weight (1 - \weight)
\enorm{\int \limits_{0}^{1} \Exs
  \left[\frac{X X^{\top}}{\sigma^{2} \left(
      \weight \exp \left(- \frac{\theta_{u}^\top
        X}{\sigma^{2}}\right) +
        (1 - \weight) \exp
      \left( \frac{\theta_{u}^\top X}{\sigma^{2}} \right)
       \right)^{2}} \right] \theta du}
\notag\\
\label{eq:thm_asym_gamma_first}
& \hspace{- 2 em}  \leq 4 \weight (1 - \weight) \|\theta \|_{2} \max \limits_{u \in [0,1]}
\opnorm{\Exs \left[\Gamma_{\theta_u}(X) \right]},
\end{align}
where in the last equation we have defined the matrix
\begin{align}
\label{eq:gamma_theta}
\Gamma_{\theta_u}(X) \defn \frac{X X^{\top}}{\sigma^{2} \parenth{
    \weight \exp \left(- \frac{\theta_{u}^\top X}{\sigma^{2}}\right) +
    (1 - \weight) \exp \left(\frac{\theta_{u}^\top
      X}{\sigma^{2}} \right)}^{2}}.
\end{align}
Writing the mixture weight as $\weight = \frac{1}{2} \parenth{1 -
  \contracasym}$, we claim that it suffices to show that
\begin{align}
\label{eq:gamma_op_asym}
\max \limits_{u \in [0,1]} \opnorm{\Exs  \left[\Gamma_{\theta_u}(X)
    \right]} & \leq \frac{1 - \contracasym^2/ 2}{1 - \contracasym^2}.
\end{align}
Indeed, taking the last bound as given and substituting it into
inequality~\eqref{eq:thm_asym_gamma_first}, we find that
\begin{align*}
\enorm{\updateM (\theta)} & \leq 4 \weight \,(1 - \weight) \, \frac{1
  - \contracasym^2 / 2}{1 - \contracasym^2} \; \enorm{\theta} \; = \;
(1 - \contracasym^2/ 2) \enorm{\theta},
\end{align*}
which yields the claim~\eqref{EqnUnbalancedPop} of Theorem~\ref{ThmUnbalanced}.


\paragraph{Proof of claim~\eqref{eq:gamma_op_asym}} We
begin by making a convenient change of coordinates. Let $R \in \real^{
  \dims \times \dims}$ be an orthonormal matrix such that $R
\theta_{u} = \enorm{\theta_u} e_1$, where $e_{1}$ denotes the first
canonical basis vector in dimension $\usedim$.  Define the random
vector $V \defn R X/\sd$.  Since the vector $X \sim \NORMAL(0, \sd^2
I_d)$ and the matrix $R$ is orthonormal, the random vector $V$ follows
a $\NORMAL(0, I_d)$ distribution.  Substituting $X = \sd R^\top V$ and
$R\theta_u = \enorm{\theta_u} e_1$ in the expression~\eqref{eq:gamma_theta} for
$\Gamma_{\theta_u}$ and using the fact that \mbox{$\opnorm{ R^{\top}
    \mymat R} = \opnorm{ \mymat}$} for any matrix $\mymat$ and any
orthogonal matrix $R$, we find that $\opnorm{ \Exs \left[
    \Gamma_{\theta_u} (X) \right] } = \opnorm{ \mymat_{\theta_u}}$,
where
\begin{align*}
\mymat_{\theta_u} \defn \Exs_V \brackets{ \frac{ V V^{\top}
  }{\parenth{ \weight \exp \parenth{- \enorm{\theta_u} V_1 / \sd} + (1
      - \weight) \exp \parenth{\enorm{\theta_u} V_1 / \sd } }^2 }}.
\end{align*}
Here $V_1 \defn V e_1$ denotes the first coordinate of the random
vector $V$. Note that the matrix $\mymat_{\theta_u}$ is a diagonal
matrix, with non-negative entries. Thus, in order to prove the
bound~\eqref{eq:gamma_op_asym}, it suffices to show that
\begin{align}
\label{eq:btheta_thm1}
  \max_{j \in [d]}[ \mymat_{ \theta_u}]_{jj} \leq
  \frac{1 - \contracasym^2 / 2}{1 - \contracasym^2}.
\end{align}
When $\theta_u = 0$, the matrix $\mymat_{\theta_u} = \Exs[V V^\top] =
I_d$ and the claim holds trivially.  Turning to the case $\theta_u
\neq 0$, we split our analysis into two cases, depending on whether $j
= 1$ or $j \neq 1$.


\paragraph{Bounding $[ \mymat_{\theta_u}]_{11}$} 
\label{par:bounding_b_11}

Denoting $\weight = \frac{1}{2}(1 - \contracasym)$, we observe that
\begin{align}
(\weight e^{- y} + (1 - \weight) e^y) & \in [\sqrt{(1 - \contracasym^2)},\,
    1], \quad \text{ if } e^y \in \brackets{ 1, \frac{1 +
      \contracasym}{1 - \contracasym}}, \quad \text{and} \notag \\
(\weight e^{- y} + (1 - \weight) e^y) & > 1, \quad \quad \quad \quad
  \quad \quad \quad \text{otherwise}.
  \label{eqn:unbalanced_bound_case}
\end{align}
Let $\event^c$ and $\mathbb{I}( \event)$ respectively denote the
complement and the indicator of any event $\event$.  Define the event
\begin{align*}
\event_{ \theta_u} \defn \braces{e^{ \enorm{ \theta_u} V_1 / \sd} \in
  \brackets{1, \frac{1 + \contracasym}{1 - \contracasym}}}.
\end{align*} 
Using the observation~\eqref{eqn:unbalanced_bound_case}
above and the fact that $V_1 \sim \Ncal( 0, 1)$, we obtain
\begin{align}
[\mymat_{ \theta_u}]_{11} & =
\Exs \brackets{ \frac{ V_1^2}{ \parenth{ \weight
      \exp \parenth{- \enorm{ \theta_u} V_1 / \sd} + (1 - \weight)
      \exp \parenth{ \enorm{ \theta_u} V_1 / \sd}}^2}} \notag\\ & \leq 
      \frac{1}{( 1 - \contracasym^2)} \Exs \brackets{ V_1^2 
      \,\mathbb{I}( \event_{\theta_u})} + \Exs \brackets{ V_1^2 \, 
      \mathbb{I}( \event_{ \theta_u}^c)} \notag\\ & = \frac{1
      - \contracasym^2 + 
      \contracasym^2 \Exs \brackets{ V_1^2 \,\mathbb{I}
      (\event_{ \theta_u})}}{(1 - \contracasym^2)}.
    \label{eq:v1_event_bound}
\end{align}
Note that whenever $\theta_u \neq 0$, we have that $\event_{ \theta_u}
\subseteq \big \{ V_1 \geq 0 \} $ and consequently, we obtain that
\begin{align}
\label{eq:v_1_half_bound}
\Exs \brackets{ V_1^2 \, \mathbb{I}(\event_{ \theta_u})} \leq \Exs
\brackets{ V_1^2 \,\mathbb{I}( V_1 \geq 0)} =  \frac{1}{2}.
\end{align}
Putting the inequalities~\eqref{eq:v1_event_bound} and
\eqref{eq:v_1_half_bound} together, we conclude that \\
\mbox{$[ \mymat_{\theta_u}]_{11} \leq
({1 - \contracasym^2 / 2})/({ 1 - \contracasym^2})$}.


\paragraph{Bounding $[ \mymat_{\theta_u}]_{jj}, \; j \neq 1$} 
\label{par:bounding_bjj_thm1}
Using arguments similar to the previous case, 
and the fact that the random variables $V_i, i \in [d]$,
are independent standard normal random variables,
we find that
\begin{align*}
    [ \mymat_{\theta_u}]_{jj} &=
        \Exs \brackets{ \frac{V_j^2}{ \parenth{ \weight
              \exp \parenth{- \enorm{ \theta_u} V_1 / \sd} + (1 - 
              \weight)
              \exp \parenth{ \enorm{\theta_u} V_1 / \sd}}^2}}\\ &=\Exs 
              \brackets{ \frac{1}{ \parenth{ \weight
              \exp \parenth{- \enorm{ \theta_u} V_1 / \sd} + (1 - 
              \weight)
              \exp \parenth{ \enorm{ \theta_u} V_1 / \sd}}^2}}.
\end{align*} 
Invoking the definition of the event $\event_{\theta_u}$, we have
\begin{align*}
[ \mymat_{\theta_u}]_{jj} &\leq
        \frac{1}{(1 - \contracasym^2)} \Exs \brackets{ \mathbb{I}
        (\event_{\theta_u})}
        + \Exs \brackets{ \mathbb{I}(\event_{ \theta_u}^c)}
        =
        \frac{1 - \contracasym^2 + \contracasym^2
          \Exs \brackets{ \mathbb{I}( \event_{\theta_u})}
        }{(1 - \contracasym^2)}.
\end{align*}
Finally, noting that
$\Exs \brackets{ \mathbb{I}( \event_{\theta_u})} \leq
\Exs \brackets{ \mathbb{I}( V_1 \geq 0)} = 1/2$ 
whenever $\theta_u \neq 0$, yields the claim.


\subsection{Proof of Theorem~\ref{thm:pop_over}} 
\label{sub:proof_of_theorem_thm:pop_over}

We split our proof into two parts, which correspond to the upper
bound~\eqref{EqnPopEMUpper} and the lower bound~\eqref{EqnPopEMLower}
respectively.


\subsubsection{Proof of the upper bound~\eqref{EqnPopEMUpper}}

For the balanced fit, we have
\begin{align*}
\weights_\theta( X) = \dfrac{1}{1 + e^{- 2 \theta^\top X/ \sd^2}}
\quad \mbox{and} \quad \gradient_\theta( \weights_\theta( X)) =
\dfrac{2 X^\top / \sd^2}{(e^{- \theta^\top X / \sd^2}+e^{ \theta^\top
    X / \sd^2})^2}.
\end{align*}
Using a Taylor expansion and repeating the preliminary computations as
those in the proof of Theorem~\ref{ThmUnbalanced} from the unbalanced
setting, we obtain that
\begin{align}
  \enorm{\updateM (\theta)} &= \enorm{\Exs \brackets{ 2 X \int_{0}^1
      \weights'_{\theta_u}(X)^\top \theta_u du}} \notag \\
& = 4 \bigg \| \int \limits_{0}^{1} \Exs \left[ \frac{X X^{\top}}
    {\sigma^2 \left( e^{- \theta_{u}^\top X / \sigma^2} + e^{
        \theta_{u}^\top X/\sigma^2} \right)^2} \right] \theta du
  \bigg\|_{2} \label{eq:e_int_gamma} \\
& \leq 4 \enorm{ \theta} \int \limits_{0}^{1} \opnorm{\Exs
    \left[\Gamma_{\theta_u}(X) \right]} du, \notag
\end{align}
where $\Gamma_{ \theta_u}(X) \defn \frac{ X X^{\top} / \sd^2} {( e^{-
    \theta_{u}^\top X / \sigma^2} + e^{ \theta_{u}^\top X /
    \sigma^2})^2}$.  Consequently, in order to prove the upper
bound~\eqref{EqnPopEMUpper}, it suffices to show that
\begin{align}
\label{eq:gamma_op}
\int_0^1 \opnorm{\Exs \left[\Gamma_{\theta_u}(X) \right]} du \leq
\frac{1}{4} \parenth{ p + \frac{1 - p}{1 + \enorm{\theta}^2 /2 \sd^2}}
=\frac{\gamup(\theta)}{4} 
\end{align}
where $p \defn (1 + \Prob_{Z \sim \Ncal(0, 1)}(\abss{Z} \leq 1))/2 < 1$.

We now establish the claim~\eqref{eq:gamma_op}. Like in proof of
Theorem~\ref{ThmUnbalanced}, we perform a change of coordinates using
an orthogonal matrix~$R$ such that \mbox{$R \theta_{u} =
  \enorm{\theta_u} e_1$}, where $e_{1}$ is the first canonical basis
in dimension $\usedim$.  Define the random vector $V \defn R X / \sd$.
Since the vector ${X \sim \NORMAL(0, \sd^2 I_d)}$ and the matrix $R$
is orthogonal, we have that the vector ${V \sim \NORMAL(0, I_d)}$.
Substituting the matrix \mbox{$X = \sd R^\top V$} and \mbox{${R
    \theta_u = \enorm{ \theta_u} e_1}$} in the expression for
$\Gamma_{ \theta_u}$ and using the equality \mbox{$\opnorm{ R^{\top}
    \mymat R} = \opnorm{ \mymat}$,} valid for any matrix $\mymat$ and
any orthogonal matrix $R$, we obtain that $\opnorm{ \Exs \left[
    \Gamma_{\theta_u} (X) \right] } = \opnorm{ \mymat_{\theta_u}}$,
where
\begin{align}
\label{eq:balanced_b_theta}
\mymat_{ \theta_u} \defn \Exs_{V} \brackets{ \frac{V V^{\top}}{
    \parenth{ \exp \parenth{- \enorm{ \theta_{u}} V_1 / \sd} + \exp
      \parenth{ \enorm{ \theta_{u}} V_1 / \sd}}^{2}}}.
\end{align}
Clearly, the matrix $\mymat_{ \theta_u}$ is a diagonal matrix with
non-negative entries (note the abuse of notation: the definitions
of the matrices $\Gamma_{\theta_u}$ and $\mymat_{\theta_u}$ is different
from the unbalanced case).
Consequently, to obtain a bound for the
operator norm of the matrix $\mymat_{ \theta_u}$, it is sufficient to
provide an upper bound on the diagonal entries of the matrix $\mymat_{ \theta_u}$. 
In order to do so, we introduce an auxiliary claim:
\begin{lemma}
\label{LemOperatorB}
The $\ell_2$-operator norm of the matrix $\mymat_{ \theta_u}$ defined 
in equation~\eqref{eq:balanced_b_theta}, is upper-bounded as
\begin{align}
  \label{eq:operator_B}
  \opnorm{ \mymat_{ \theta_u}} = \max_{j \in [d]} [\mymat_{
      \theta_u}]_{jj} \leq \frac{ p_2}{4} + \frac{(1 - p_2)}{4}
  \frac{1}{(1 + \enorm{ \theta_u}^2 / (2 \sd^2))^2},
\end{align}
where $p_2 = \Prob\parenth{|V_1| \leq 1 } < 1$.
\end{lemma}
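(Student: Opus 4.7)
The plan is to exploit the diagonal structure of $\mymat_{\theta_u}$ and reduce the operator-norm bound to bounding each diagonal entry separately. First I would verify that off-diagonal entries vanish: writing $t \defn \enorm{\theta_u}/\sd$ for brevity, for indices $i \neq j$ with both $i, j \neq 1$, pairwise independence of $V_i$ and $V_j$ and the fact that the weight depends only on $V_1$ yield $\Exs[V_i V_j / (4\cosh^2(tV_1))] = 0$; while for entries involving $V_1$ and some $j \neq 1$, the zero-mean property of $V_j$ alone gives $\Exs[V_1 V_j / (4\cosh^2(tV_1))] = 0$. Hence $\opnorm{\mymat_{\theta_u}} = \max_{j \in [d]} [\mymat_{\theta_u}]_{jj}$.

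For any $j \neq 1$, independence of $V_j$ from $V_1$ combined with $\Exs[V_j^2] = 1$ gives $[\mymat_{\theta_u}]_{jj} = \Exs[1/(4\cosh^2(t V_1))]$. I would bound this via a dichotomy according to the event $\{|V_1| \leq 1\}$. On this event the trivial inequality $\cosh \geq 1$ gives integrand bounded by $1/4$, contributing at most $p_2/4$. On its complement I would apply the elementary inequality $\cosh(x) \geq 1 + x^2/2$ to obtain $\cosh^2(tV_1) \geq (1 + t^2 V_1^2/2)^2 \geq (1 + t^2/2)^2$, since $V_1^2 \geq 1$; this contributes at most $(1-p_2)/(4(1+t^2/2)^2)$. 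Noting $t^2/2 = \enorm{\theta_u}^2/(2\sd^2)$, the sum matches the right-hand side of \eqref{eq:operator_B}.

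The delicate step is to handle the first diagonal entry $[\mymat_{\theta_u}]_{11} = \Exs[V_1^2/(4\cosh^2(t V_1))]$, since the $V_1^2$ factor is now \emph{inside} the expectation. A na\"ive split at $|V_1| = 1$ fails to achieve the target, because on the tail the resulting bound would scale with $\Exs[V_1^2 \mathbb{I}(|V_1| > 1)]$, which exceeds $1 - p_2 = \Prob(|V_1|>1)$ and breaks the required form. Instead, my plan is to invoke Chebyshev's correlation inequality: both $v \mapsto v^2$ and $v \mapsto 1/\cosh^2(tv)$ are functions of $|V_1|$, the former non-decreasing and the latter non-increasing (because $\cosh$ is even and non-decreasing on $[0,\infty)$). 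Their covariance under the distribution of $|V_1|$ is therefore non-positive, so $\Exs[V_1^2 \cdot 1/\cosh^2(tV_1)] \leq \Exs[V_1^2] \cdot \Exs[1/\cosh^2(tV_1)] = \Exs[1/\cosh^2(tV_1)]$. Dividing by $4$, I conclude $[\mymat_{\theta_u}]_{11} \leq [\mymat_{\theta_u}]_{jj}$ for any $j \neq 1$, so the bound from the previous step applies uniformly to the maximum.

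The main obstacle is this third step: it is tempting to attack $[\mymat_{\theta_u}]_{11}$ directly via the same dichotomy that worked for the other diagonal entries, but the extra $V_1^2$ factor weights mass outside $[-1,1]$ too heavily to recover the target tail rate of $(1+t^2/2)^{-2}$. The Chebyshev correlation argument is precisely what lets us transfer the clean bound for $j \neq 1$ to the $j = 1$ coordinate; the remaining ingredients (independence, the inequality $\cosh(x) \geq 1 + x^2/2$, and the split) are routine.
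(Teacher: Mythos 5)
Your proof is correct, but it handles the key step---the first diagonal entry---differently from the paper. For the off-diagonal vanishing and the entries $[\mymat_{\theta_u}]_{jj}$ with $j \neq 1$, your argument (independence, the split at $|V_1|=1$, and $\cosh(x) \geq 1 + x^2/2$, which is the paper's $e^y + e^{-y} \geq 2 + y^2$ in disguise) is exactly the paper's. For $[\mymat_{\theta_u}]_{11}$ the paper \emph{does} perform the ``na\"ive'' split that you dismiss: it obtains $[\mymat_{\theta_u}]_{11} \leq \frac{p_1}{4} + \frac{1-p_1}{4}\frac{1}{(1+\enorm{\theta_u}^2/(2\sd^2))^2}$ with $p_1 \defn \Exs[V_1^2 \mathbb{I}(|V_1|\leq 1)]$, using that $\Exs[V_1^2\mathbb{I}(|V_1|>1)] = 1 - p_1$ because $\Exs[V_1^2]=1$. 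Your objection that the tail mass $\Exs[V_1^2\mathbb{I}(|V_1|>1)]$ exceeds $1-p_2$ is true but harmless: the resulting bound is a convex combination with weights $(p_1, 1-p_1)$, and since $p_1 < p_2$ and the coefficient $\frac{1}{4}$ dominates $\frac{1}{4(1+x)^2}$, the $p_1$-bound is actually \emph{smaller} than the stated $p_2$-bound, so the direct split succeeds (and is marginally sharper). Your alternative---Chebyshev's correlation inequality applied to the increasing function $v \mapsto v^2$ and the decreasing function $v \mapsto 1/\cosh^2(tv)$ of $|V_1|$, giving $[\mymat_{\theta_u}]_{11} \leq [\mymat_{\theta_u}]_{jj}$---is also valid, and is in fact the mirror image of the Harris-inequality argument the paper itself uses to prove the reverse comparison $[\mymatTheta]_{jj} \geq [\mymatTheta]_{11}$ in the companion lower-bound lemma. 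What your route buys is uniformity (one tail computation serves all coordinates); what the paper's route buys is a slightly tighter constant for the first coordinate, which it then deliberately relaxes to the common $p_2$ form.
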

\noindent See Appendix~\ref{AppLemOperatorB} for the proof of
Lemma~\ref{LemOperatorB}. \\

Using Lemma~\ref{LemOperatorB}, we now complete the proof.
Integrating both sides of the inequality~\eqref{eq:operator_B} with
respect to $u \in [0, 1]$, we find that
\begin{align*}
\int_{0}^1 \opnorm{ \mymat_{\theta_u}} du &\leq \int_0^1 \frac{
  p_2}{4} du + \int_0^1 \frac{(1 - p_2)}{4} \frac{1}{(1 +
  \enorm{\theta_u}^2 / (2 \sd^2))^2} du \\
& = \frac{p_2}{4} + \frac{(1 - p_2)}{4} \int_0^1 \frac{1}{(1 + u^2
    \enorm{\theta}^2 / (2 \sd^2))^2} du.
\end{align*}
Direct computation of the second integral yields
\begin{align*}
  \int \limits_{0}^{1} \frac{1}{(1 + u^2 \enorm{ \theta}^2 / (2
    \sd^2))^2} du & = \dfrac{1}{2} \biggr(\frac{1}{1 + \frac{\|
      \theta\|^{2}}{2 \sigma^{2}}} + \frac{ \tan^{- 1} (\| \theta\| /
    ( \sqrt{2} \sigma)}{\|\theta\|/( \sqrt{2} \sigma)} \biggr) \nonumber \\
    & \leq
  \dfrac{1}{2} \biggr( \frac{1}{ 1 + \frac{\| \theta\|^{2}}{2
      \sigma^{2}}} + 1 \biggr),
\end{align*}
where the last inequality above follows since $\tan^{-1}(y) \leq y$, 
for all $y \geq 0$. Putting together the pieces yields
\begin{align*}
  \int_0^1 \opnorm{\Exs \left[ \Gamma_{\theta_u}(X) \right]} du   
  = \int_0^1 \opnorm{ \mymat_{\theta_u}} du
  \leq  \frac{(1 + p_2)}{8} + \frac{(1 - p_2) / 8}{1 + \enorm{\theta}^2 / 
  (2 \sd^2)},
\end{align*}
which implies the claim~\eqref{eq:gamma_op} with $p = \frac{1 + p_2}{2}$.


\subsubsection{Proof of the lower bound~\eqref{EqnPopEMLower}}

We now prove the lower bound~\eqref{EqnPopEMLower} of Theorem~\ref{thm:pop_over}
on the population EM operator $\updateM$.  The argument involves 
Jensen's inequality and
certain properties of the moment generating function (MGF) of the
Gaussian distribution.

Recalling equation~\eqref{eq:e_int_gamma}, we find that
\begin{align}
\enorm{\updateM( \theta)} 
& = 4 \bigg \Vert \underbrace{ \int \limits_{0}^{1} \Exs \bigg[{ \frac{X
        X^{\top}}{ \sd^{2} \left( \exp \left (- { \theta_{u}^\top X} /
        {\sd^{2}} \right) + \exp \left ({\theta_{u}^\top X}/{\sd^{2}}
        \right) \right)^{2}}}\bigg] du}_{\rdefn \updateMat} \,\,
\theta \bigg \Vert_2 \nonumber \\
& \geq 4 \eigmin \parenth{ \updateMat} \enorm{ \theta},
\label{eq:norm_lambda_min_bd}
\end{align}
where $\eigmin( \updateMat)$ denotes the smallest eigenvalue of the
square matrix $\updateMat$. Following the change
of variable $V \defn R X / \sd$ used in the proof of upper
bound~\eqref{EqnPopEMUpper}, we obtain that
\begin{align}
\label{eq:defn_F_theta}
\eigmin \parenth{ \updateMat} = \eigmin \bigg( \underbrace{\Exs_{V} \bigg
[\int
  \limits_{0}^{1} \frac{V V^{\top}}{ \left ( \exp \left( - \|
    \theta_{u} \|_{2} V_1 / \sd \right) + \exp \left( \| \theta_{u}
    \|_{2} V_1 / \sd \right ) \right )^{2}} du \bigg]}_{\rdefn \mymatTheta} \bigg).
\end{align}
Clearly, the matrix $\mymatTheta$ is a diagonal matrix with
non-negative diagonal entries and consequently, we have
\begin{align}
\eigmin( \mymatTheta) = \min_{j \in [\dims]}[ \mymatTheta]_{jj}.
\label{Eq:lambda_min_relation}
\end{align}
In order to provide a lower bound on the diagonal entries
of the matrix $\mymatTheta$, we use the following 
auxiliary claim:
\begin{lemma}
  \label{LemRabbits}
For all vectors $\theta \in \real^\usedim$ such that 
$\enorm{ \theta}^2 \leq \frac{5  \sd^2}{8}$, the matrix 
$\mymatTheta$ defined in equation~\eqref{eq:defn_F_theta}, satisfies the bounds
\begin{align}
  \brackets{ \mymatTheta}_{jj} \geq \brackets{ \mymatTheta}_{11} \geq
  \frac{1}{4 (1 + {2 \enorm{ \theta}^2} / {\sd^2})} \quad \text{for
    all $j \in [\dims]$.}
 \label{eq:theorem_lower_reduction}
\end{align}
\end{lemma}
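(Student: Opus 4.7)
The plan is to first exploit symmetry to reduce $\mymatTheta$ to a diagonal matrix, then compute its diagonal entries in closed form (up to a single one-dimensional Gaussian expectation), and finally prove the two stated inequalities by elementary special-function bounds. The symmetry reduction is immediate: for $i \neq j$ with $\{i,j\} \cap \{1\} = \emptyset$, the integrand contains $V_i V_j$ multiplied by a function of $V_1$ alone, which vanishes in expectation by independence; for entries of the form $(1,j)$ with $j \neq 1$, the integrand is odd in $V_j$ and so again averages to zero. So only $[\mymatTheta]_{11}$ and $[\mymatTheta]_{jj}$ for $j \neq 1$ need to be computed.

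For the closed forms, I would swap the order of the $u$-integral and the $V$-expectation and invoke the classical identity $\int_0^1 du/\cosh^2(ub) = \tanh(b)/b$ with $b = \|\theta\|V_1/\sd$. For $j \neq 1$, independence of $V_j$ from $V_1$ together with $\Exs V_j^2 = 1$ yields
\begin{align*}
[\mymatTheta]_{jj} \;=\; \Exs_{V_1}\!\left[\frac{\tanh(\|\theta\|V_1/\sd)}{4\,\|\theta\|V_1/\sd}\right].
\end{align*}
For $j = 1$, the extra factor of $V_1^2$ pulls inside to give
\begin{align*}
[\mymatTheta]_{11} \;=\; \frac{\sd}{4\|\theta\|}\,\Exs_{V_1}\bigl[V_1\tanh(\|\theta\|V_1/\sd)\bigr]
\;=\; \tfrac{1}{4}\Exs_{V_1}\bigl[\sech^2(\|\theta\|V_1/\sd)\bigr],
\end{align*}
where the second equality is Stein's identity $\Exs[Wg(W)] = \Exs[g'(W)]$ applied to $g(w) = \tanh(\|\theta\|w/\sd)$ with $W = V_1 \sim \Ncal(0,1)$.

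The first inequality $[\mymatTheta]_{jj} \geq [\mymatTheta]_{11}$ then follows pointwise from the elementary bound $\tanh(x)/x \geq \sech^2(x)$, which after multiplying by $x\cosh^2(x) > 0$ reduces to $\sinh(2x) \geq 2x$, a standard consequence of the convexity of $\sinh$ on $[0,\infty)$ (and odd symmetry handles $x < 0$). For the second inequality, I would use the well-known comparison $\cosh(x) \leq e^{x^2/2}$ (easily verified by comparing the Taylor series term by term), giving $\sech^2(x) \geq e^{-x^2}$. Substituting and computing the Gaussian moment generating function yields
\begin{align*}
[\mymatTheta]_{11}
\;\geq\; \tfrac{1}{4}\Exs_{V_1}\!\bigl[e^{-\|\theta\|^2 V_1^2/\sd^2}\bigr]
\;=\; \frac{1}{4\sqrt{1 + 2\|\theta\|^2/\sd^2}}
\;\geq\; \frac{1}{4\bigl(1 + 2\|\theta\|^2/\sd^2\bigr)},
\end{align*}
where the last step uses $1/\sqrt{1+x} \geq 1/(1+x)$ for $x \geq 0$.

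The argument is essentially a sequence of elementary reductions, so there is no genuinely hard step; the main obstacle is merely bookkeeping, namely verifying that the exchange of integration order and the off-diagonal cancellations are legitimate (easily handled by Fubini and dominated convergence, since $\cosh^{-2}$ is bounded above by $1$). I note that the hypothesis $\|\theta\|^2 \leq 5\sd^2/8$ does not appear to be used by this route; it is presumably employed in a different proof based on Taylor expansion of $\cosh$ around the origin, and in any event the stated inequality holds under the hypothesis.
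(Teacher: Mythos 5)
Your proof is correct, and while it shares the same backbone as the paper's argument---exchanging the $u$-integral with the expectation, invoking $\int_0^1 (e^{au}+e^{-au})^{-2}\,du = \tanh(a)/(4a)$, and applying Stein's lemma to rewrite $[\mymatTheta]_{11} = \tfrac{1}{4}\Exs[\sech^2(\enorm{\theta}V_1/\sd)]$---it diverges at both of the substantive inequalities. For $[\mymatTheta]_{jj} \geq [\mymatTheta]_{11}$, the paper applies the Harris (FKG-type) correlation inequality to the pair $V_1^2$ (increasing in $|V_1|$) and the integrand (decreasing in $|V_1|$), whereas you use the pointwise bound $\tanh(y)/y \geq \sech^2(y)$, equivalent to $\sinh(2y)\geq 2y$; both are valid, and yours is the more elementary, requiring no correlation inequality. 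For the lower bound on $[\mymatTheta]_{11}$, the paper writes $\sech^2$ as $(2+e^{2b V_1}+e^{-2bV_1})^{-1}$, applies Jensen's inequality to $y\mapsto 1/y$, evaluates the Gaussian MGF, and then linearizes via $e^t \leq 1+2t$ on $t\in[0,5/4]$---which is precisely where the hypothesis $\enorm{\theta}^2 \leq 5\sd^2/8$ enters. You instead use $\cosh(x)\leq e^{x^2/2}$ to get $\sech^2(x)\geq e^{-x^2}$ and compute the Gaussian integral exactly, obtaining $[\mymatTheta]_{11} \geq \tfrac{1}{4}(1+2\enorm{\theta}^2/\sd^2)^{-1/2}$, which is stronger than the claimed bound and, as you observe, holds for \emph{all} $\theta$ with no radius restriction. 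Your route thus buys a cleaner and slightly more general statement at no extra cost; the paper's route is what forces the hypothesis into the lemma.
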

\noindent See Appendix~\ref{AppLemRabbits} for the proof of this
claim.

Finally, combining the result of Lemma~\ref{LemRabbits} with 
equations~\eqref{eq:norm_lambda_min_bd} and~\eqref{Eq:lambda_min_relation},
 we conclude that
\begin{align*}
\frac{ \enorm{ \updateM( \theta)}}{\enorm{ \theta}} & \geq 4 \eigmin
\parenth{ \updateMat} \; = \; 4 [ \mymatTheta]_{11} \; \geq \;
\frac{1}{({1 + 2 \enorm{ \theta}^2 / \sd^2})} = \gamlow(\theta),
\end{align*}
as claimed.


\subsection{Proof of Theorem~\ref{theorem:convergence_rate_sample_EM}} 
\label{sub:proof_of_theorem_thmsampleoverfit}

The reader should recall the framework that was laid out in
Section~\ref{SecSuboptimal}, especially
Lemma~\ref{lemma:bound_EM_operators} which was used to bound the
deviation between the sample and population EM operators, as well the
annulus-based localization argument (that breaks up the iterations
of EM into different epochs) sketched out in
Section~\ref{SecLocalize}.  The proof of
Theorem~\ref{theorem:convergence_rate_sample_EM} is based on making
this proof outline more precise.


\subsubsection{Epochs and non-expansivity}
\label{ssub:notation}

Let us introduce the notation required to formalize the
analysis that leads to the recursion~\eqref{EqnKeyRecursion}.  Recall
that the recursion~\eqref{EqnKeyRecursion} 
generates the sequence $\{ \seqalpha{\ind}\}_{\ind \geq 0}$ given by
\begin{subequations}
\begin{align}
  \label{eq:alpha_seq}
  \seqalpha{0} = 0 \quad \text{and} \quad \seqalpha{\ind + 1} = \frac{
    \seqalpha{\ind}}{3} + \frac{1}{6}, \quad \mbox{for $\ind = 0, 1,
    2, \ldots$.}
\end{align}
By inspection, this sequence is increasing and satisfies $\lim
\limits_{\ind \to \infty} \seqalpha{\ind} = 1/4$.  Furthermore, we
have $\seqalpha{\ind} \leq 1/4 - \smallthreshold$ for $\ind \geq
\lceil \log(4 / \smallthreshold) / \log 3 \rceil$.  For any given
$\delta \in (0, 1)$, define the following intermediate quantity
\begin{align}
\label{eq:define_big_and_small_delta}
\dndelta = \sd^2 \parenth{\frac{ \dims + \log(( 2\imax + 1) /
\threshold)}{ \obs}}  \text{ where } \imax
\defn \lceil \log(4
  /\smallthreshold) / \log 3 \rceil + 1.
\end{align}
Note that the lower bound on the sample size stated in the theorem
ensures that $\dndelta \leq 1$.  For the proof sketch provided in
Section~\ref{SecLocalize}, we used the rough approximation $\dndelta
\approx d / n$, which is adequate when tracking only the dependency on
the pair $(n,d)$.

For $\ind = 0, 1, 2, \ldots, \imax - 1$, define the scalars
$\numsteps_\ind$ and $\totalnumsteps_{\ind}$ as
\begin{align}
\label{eq:time_steps}
\numsteps_0 = \left \lceil \frac{2}{p} \log\frac{ \enorm{ \theta_0}}{
  \sqrt{2} \sd \sqrt{ \dndelta}} \right\rceil, \ \  \numsteps_{\ind}
= \left \lceil \frac{2}{ p \dndelta^{ 2 \seqalpha{\ind + 1}}} \log(1 /
\dndelta) \right \rceil, \ \text{ and} \  \totalnumsteps_{\ind}
= \sum_{j = 0}^\ind \numsteps_j,
\end{align}
\end{subequations}
where $\lceil y \rceil $ denotes the smallest integer greater than or equal
to $y$, and the
constant $p \in (0,1)$ is given by $p = \Prob(\abss{X} \leq 1) +
\frac{1}{2} \Prob(\abss{X}>1)$ where \mbox{$X \sim \Ncal(0,1)$}.  For each
$\ind = 1, 2, \ldots$, the term $\numsteps_{\ind}$ corresponds to the
number of iterations for the $\ind$-th epoch, whereas the quantity
$\totalnumsteps_{\ind}$ denotes the total number of iterations up to
the completion of that epoch.

Recall that Lemma~\ref{lemma:bound_EM_operators}, stated in the main
paper, gives us a bound on 
$\sup_{\| \theta \|_2 \leq r}\enorm{M_{n}( \theta) - M( \theta)}$
for a given radius $r$.  In the epoch-based argument, we have a
sequence of such radii (corresponding to the outer radii of the annulus
considered in each epoch), so that we need to control this same quantity
uniformly over all radii $r$ in the set $\radii$ given by
\begin{align}
\label{eq:R_and_c}
\radii & = \braces{ \enorm{ \theta^0}, \sqrt{2} \sd
  \dndelta^{ \seqalpha{0}}, \ldots, \sqrt{2}
  \sd \dndelta^{ \seqalpha{ \imax - 1}}, \uniconnew \sqrt{2} \sd
  \dndelta^{ \seqalpha{0}}, \ldots, \uniconnew \sqrt{2} \sd
  \dndelta^{ \seqalpha{ \imax - 1}}}.
\end{align}
Here ${\uniconnew = (2 \uniconone \sd / p + 1)}$ 
denotes a constant independent of $\obs, \dims, \delta$
and $\smallthreshold$ where $\uniconone$ is the universal
constant that appeared in the bound from Lemma~\ref{lemma:bound_EM_operators}.
In order to do so, we apply a standard union bound with
Lemma~\ref{lemma:bound_EM_operators} and obtain that
\begin{align}
  \label{eq:sup_m_bound}
  \sup_{\| \theta \|_2 \leq r}\enorm{M_{n}( \theta) - M( \theta)}
  \leq \uniconone \sigma r \sqrt{ \dndelta} \quad \text{ for all } r \in \radii,
\end{align}
with probability at least $1 - \delta$.
Let $\event(\obs,\dims,
\smallthreshold, \threshold)$ denote the event that the
bound~\eqref{eq:sup_m_bound} holds.

With this notation in place, we start with our first claim. The
sample-based EM operator is \emph{non-expansive} in the following
sense:
\begin{lemma}
\label{LemNonExpansive}
Consider the sample-based EM iteration $
\theta^{t + 1} = M_n( \theta^t)$ with a sample size
 $\obs \geq (2 \uniconone \sd / p)^{1 / (2
  \smallthreshold)} \sd^2 (\dims + \log((2 \imax + 1) / \delta))$. Suppose
that there exists an index $\ind \in \braces{0, 1, \ldots, \imax - 1}$
and an iteration number $t$ such that \mbox{$\enorm{ \theta^t}
  \leq \sqrt{2} \sd \dndelta^{ \seqalpha{\ind}}$}.  Then, conditional
on the event $\event(\obs, \dims, \smallthreshold, \delta)$ from
equation~\eqref{eq:sup_m_bound}, we have
\begin{align}
\Vert{ \theta^{t'}}\Vert_2 \leq \sqrt{2} \sd \dndelta^{\seqalpha{\ind}}
\qquad \mbox{for all $t' \geq t$.}
\end{align}
\end{lemma}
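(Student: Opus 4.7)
The plan is to prove the non-expansivity by induction on $t' \geq t$, with the base case being immediate from the hypothesis. For the inductive step, I would set $b \defn \sqrt{2}\sd\dndelta^{\seqalpha{\ind}}$, assume $\|\theta^{t'}\|_2 \leq b$, and show that $\|\updateM_\obs(\theta^{t'})\|_2 \leq b$. The natural decomposition is the triangle inequality
\begin{align*}
  \|\updateM_\obs(\theta^{t'})\|_2 \leq \|\updateM(\theta^{t'})\|_2 + \|\updateM_\obs(\theta^{t'}) - \updateM(\theta^{t'})\|_2,
\end{align*}
which separates the population dynamics from the empirical perturbation; I would bound each term using tools already established in the excerpt.

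For the perturbation term, I would apply the uniform deviation bound~\eqref{eq:sup_m_bound} with radius $r = b$, which belongs to $\radii$ by construction, giving $\|\updateM_\obs(\theta^{t'}) - \updateM(\theta^{t'})\|_2 \leq \uniconone\sd b \sqrt{\dndelta}$ on the event $\event(\obs,\dims,\smallthreshold,\threshold)$. For the population term, I would invoke Theorem~\ref{thm:pop_over}(a) to get $\|\updateM(\theta)\|_2 \leq \gamup(\theta)\|\theta\|_2$, and then note that because $b^2/(2\sd^2) = \dndelta^{2\seqalpha{\ind}} \leq 1$, the scalar map $x \mapsto (1-p)x + px/(1 + x^2/(2\sd^2))$ has nonnegative derivative throughout $[0,b]$. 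This monotonicity, combined with the inductive hypothesis, yields the sharp bound $\|\updateM(\theta^{t'})\|_2 \leq b\bigl(1 - p + p/(1+\dndelta^{2\seqalpha{\ind}})\bigr)$.

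Combining the two ingredients, the inductive step reduces to the scalar inequality $\uniconone\sd\sqrt{\dndelta} \leq p\dndelta^{2\seqalpha{\ind}}/(1 + \dndelta^{2\seqalpha{\ind}})$. Using $1 + \dndelta^{2\seqalpha{\ind}} \leq 2$, it is implied by $\dndelta^{1/2 - 2\seqalpha{\ind}} \leq p/(2\uniconone\sd)$. The plan is to derive this from the stated sample size lower bound, combined with the explicit formula $\seqalpha{\ind} = \tfrac{1}{4} - \tfrac{1}{4\cdot 3^{\ind}}$, which guarantees a positive gap $\tfrac{1}{2} - 2\seqalpha{\ind}$ for every $\ind \leq \imax - 1$; together these imply the required inequality on $\dndelta$. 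Iterating the inductive step over all $t' \geq t$ then completes the proof.

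The main obstacle is making the contraction-versus-noise balance work \emph{uniformly} over every epoch in the allowed range. As $\ind$ grows, the contraction slack $1 - \gamup(b) \asymp \dndelta^{2\seqalpha{\ind}}$ shrinks while the perturbation size stays at order $\sqrt{\dndelta}$, so the sample-size threshold must be calibrated so that even the worst-case epoch $\ind = \imax - 1$ satisfies the scalar inequality. Tracking this dependence carefully is precisely what forces the exponential-in-$1/\smallthreshold$ factor in the sample-size condition of the lemma.
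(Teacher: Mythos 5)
Your proposal is correct and follows the same skeleton as the paper's proof: reduce to a one-step statement, split $\enorm{\updateM_\obs(\theta^{t'})}$ by the triangle inequality into the empirical perturbation and the population term, bound the former via the uniform event~\eqref{eq:sup_m_bound} at the outer radius $r = \sqrt{2}\sd\dndelta^{\seqalpha{\ind}} \in \radii$, bound the latter via Theorem~\ref{thm:pop_over}(a), and close the loop with a scalar inequality guaranteed by the sample-size condition. The one genuine difference is how you handle the population term. The paper assumes, ``without loss of generality,'' that $\enorm{\theta^t} \geq \sqrt{2}\sd\dndelta^{\seqalpha{\ind+1}}$ (descending to a deeper annulus and ``mimicking the argument'' otherwise), which yields the contraction factor $1 - p\dndelta^{2\seqalpha{\ind+1}}/2$ and hence requires $\dndelta^{1/2 - 2\seqalpha{\ind+1}} \leq p/(2\uniconone\sd)$. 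You instead observe that $x \mapsto x\,\gamup(x)$ is nondecreasing on $[0, \sqrt{2}\sd]$ (its derivative is at least $1-p \geq 0$ there), so the inductive hypothesis $\enorm{\theta^{t'}} \leq \sqrt{2}\sd\dndelta^{\seqalpha{\ind}}$ alone gives $\enorm{\updateM(\theta^{t'})} \leq \sqrt{2}\sd\dndelta^{\seqalpha{\ind}}\bigl(1 - p\dndelta^{2\seqalpha{\ind}}/(1+\dndelta^{2\seqalpha{\ind}})\bigr)$. This is tidier: it removes the informal descent step entirely, and since $\seqalpha{\ind} \leq \seqalpha{\ind+1}$ and $\dndelta \leq 1$, your closing inequality $\dndelta^{1/2-2\seqalpha{\ind}} \leq p/(2\uniconone\sd)$ is implied by the paper's, so the stated sample-size threshold still suffices. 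Your final paragraph correctly identifies that the binding constraint comes from the largest epoch index, which is exactly where the $(2\uniconone\sd/p)^{1/(2\smallthreshold)}$ factor in the sample-size condition is used (via the lower bound $\tfrac{1}{2} - 2\seqalpha{\ind} \gtrsim \smallthreshold$ for $\ind \leq \imax - 1$, not merely positivity); this is the same constant-tracking the paper performs.
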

\noindent See Appendix~\ref{AppLemNonExpansive} for the 
proof of this claim.


\subsubsection{Core of the argument}
\label{ssub:proof_of_theorem_convergence_rate_sample_EM}

We now proceed to the core of the argument.  Suppose that the sample
size is lower bounded as
\begin{align}
  \label{eq:n_bound}
\obs \geq \max \braces{c_2, \parenth{ \frac{ 2c_1 \sd}{ p}+1}^{\frac {4
}{\smallthreshold}}, \parenth{\frac{\sqrt 2 c_1 \enorm{\theta^0}}{p}+1}^2
     } \cdot \sd^2  (d +\log \parenth{
        \frac{3 \log( 4 / \smallthreshold)}{ \delta}}),
\end{align}
where the constants $c_1$ and $c_2$ correspond to that from 
Lemma~\ref{lemma:bound_EM_operators}.
Moreover, recall that the quantity $\dndelta$ and the time-steps
$\totalnumsteps_{\ind}$ were defined in
equations~\eqref{eq:define_big_and_small_delta} and
\eqref{eq:time_steps} respectively.  The core of the proof consists of
the following:
\paragraph{Key claim}
For all $\ind \in \braces{ 0, 1, \ldots, \imax - 1}$, we have
\begin{align}
  \enorm{ \theta^t } & \leq \sqrt{2} \sd \dndelta^{ \seqalpha{\ind}}
  \qquad \text{ for all } t \geq \totalnumsteps_{\ind},
  \label{eq:general_step}
  \end{align}
with probability at least $1 - \delta$.

Taking this claim as given, let us now show how the bounds in
Theorem~\ref{theorem:convergence_rate_sample_EM} hold for all $t \geq
\totalnumsteps_{ \imax - 1}$.  Straightforward computations yield that
\begin{align}
\totalnumsteps_{\imax - 1} & \leq T_0 + ( \imax - 1) t_{ \imax - 1} \nonumber \\
&
\leq \frac{ 4}{ p} \brackets{ \log \frac{ \enorm{ \theta_0}}{ \sqrt{2}
    \sd \sqrt{ \dndelta}} + \log \frac{ 4}{ \smallthreshold} \cdot
  \dndelta^{ 1 / 2 - 2 \smallthreshold} \cdot \log \frac{ \obs}{ \sd^2
    \dims}} \notag \\
\label{eq:t_bound}
& \leq \frac{8}{p} \brackets{ \log \frac{ \enorm{ \theta_0}^2 \obs}{
    \sd^2 \dims} + \parenth {\frac{ \obs}{ \dims}}^{ \frac{1}{2} - 2
    \smallthreshold} \cdot \log \parenth{ \frac{4} { \smallthreshold}}
  \cdot \log \parenth{ \frac{ \obs}{ \sd^2 \dims}} \cdot \sd^{4
    \smallthreshold - 1}}.
\end{align}
In other words, equations~\eqref{eq:n_bound} and~\eqref{eq:t_bound}
provide the explicit expression for the number of samples and number
of steps required by sample-based EM to converge to a ball of radius
$(\dims / \obs)^{ 1 / 4 - \smallthreshold}$ around the truth
$\thetastar = 0$.

\paragraph{Proof of the claim~\eqref{eq:general_step}}
It remains to prove the key claim, and we do so by an induction on the
epoch index $\ind$.  All of the argument are performed conditioned on
the event $\event(\obs, \dims, \smallthreshold, \delta)$ defined in
equation~\eqref{eq:sup_m_bound}; note that this event occurs with
probability at least $1 - \delta$.  Moreover, we see that the sample
size assumption~\eqref{eq:n_bound} for
Theorem~\ref{theorem:convergence_rate_sample_EM} is larger than
required in Lemma~\ref{LemNonExpansive} and hence we can invoke the
non-expansiveness of the sample-based EM operator in our arguments
to follow.


\paragraph{Proof of base case: ($\ind = 0$)} 
\label{par:proof_of_claim_eq:first_step}

We adopt the shorthand \mbox{$\kinit = \enorm{ \theta_0} / { \sqrt 2
    \sd}$.}  The non-expansiveness property of the sample-based
EM-operator (Lemma~\ref{LemNonExpansive}) ensures that it is
sufficient to consider the case that $\enorm{ \theta^t} \in [ \sqrt{2}
  \sd, \kinit \sqrt{2} \sd]$ for all $ t \leq \totalnumsteps_0$.
Applying the triangle inequality yields
\begin{subequations}
\begin{align}
\label{eq:theta_t_first_step}   
\enorm{ \theta^{t + 1}} & \leq \enorm{ \updateM_\obs( \theta^t ) -
  \updateM( \theta^t )} + \enorm{ \updateM( \theta^t)} \\
\label{eq:theta_t_first_recursion}
& \stackrel{(i)}{ \leq} \uniconone \sd \cdot \kinit \sqrt{2}\sd \cdot
\sqrt{ \dndelta} + \gamup( \theta^t ) \enorm{ \theta^t },
\end{align}
\end{subequations}
where step~(i) follows from  using $r = \kinit \sqrt{2} \sd$ 
in the event~\eqref{eq:sup_m_bound}, and applying Theorem~\ref{thm:pop_over}
(for the two terms respectively).  
Noting that
$\enorm{ \theta^t} \geq \sqrt{2} \sd$, we also have that
\begin{align*}
  \gamup(\theta^t) = 1 - p + \frac{p}{ 1 + \enorm{ \theta^t}^2 /
    \sd^2} = 1 - \frac{ p \enorm{ \theta^t}^2}{ \enorm{ \theta^t}^2 +
    2 \sd^2} \leq \underbrace{ 1 - \frac{p}{2} }_{ \overline{\gamma}_0}.
\end{align*}
Recursing the inequalities~\eqref{eq:theta_t_first_step} and
\eqref{eq:theta_t_first_recursion} from $t = 0$ up to $t =
\totalnumsteps_0$, and using the fact that $ \gamup( \theta^t ) \leq
\overline{\gamma}_0$, we find that
\begin{align*}
  \enorm{ \theta^{ \totalnumsteps_0}} & \leq \uniconone \sd \cdot \kinit
  \sqrt{2}\sd \cdot \sqrt{ \dndelta} (1 + \overline{\gamma}_0 + \ldots +
  \overline{\gamma}_0^{ \totalnumsteps_0 - 1}) + \overline{\gamma}_0^{ \totalnumsteps_0}
  \enorm{ \theta^0} \\
  & \leq \frac{ \uniconone \sd \cdot \kinit \sqrt{2} \sd \cdot
    \sqrt{ \dndelta}}{1 - \overline{\gamma}_0} + \overline{\gamma}_0^{ \totalnumsteps_0}
  \kinit \sqrt{2} \sd.
\end{align*}
Substituting the expressions $\overline{\gamma}_0 = 1 - p / 2$ and 
$\totalnumsteps_0 = \lceil (2 / p) \log( \kinit / \sqrt{ \dndelta}) \rceil$,
we obtain that
\begin{align*}
  \enorm{ \theta^{ \totalnumsteps_0}} \leq \parenth{2 \kinit \uniconone
    \sd / p + 1} \sqrt{ \dndelta} \sqrt{2} \sd \leq \sqrt{2} \sd,
\end{align*}
where the last inequality follows from the fact that for the assumed
bound~\eqref{eq:n_bound} on $n$, we have $(2 \kinit \uniconone\sd / p + 1)
\sqrt{ \dndelta} \leq 1$.  The base-case now follows.


\paragraph{Proof of inductive step} 
\label{par:proof_of_claim_eq:general_step_}
\newcommand{\gamupbar}{\overline\gamma_{\ind}}
Now we prove the inductive step. In particular, we assume that
$\enorm{ \theta^{ \totalnumsteps_{\ind}}} \leq \sqrt{2} \sd \dndelta^{
  \seqalpha{\ind}}$ and show that $\enorm{ \theta^{
    \totalnumsteps_{\ind + 1}}} \leq \sqrt{2}\sd \dndelta^{ \seqalpha{
    \ind + 1}}$.  Once again, Lemma~\ref{LemNonExpansive} implies that
we may assume without loss of generality that \mbox{$\enorm{ \theta^t}
  \in [ \dndelta^{ \seqalpha{\ind + 1}}, \dndelta^{
      \seqalpha{\ind}}]$} for all $t \in \braces{
  \totalnumsteps_{\ind}, \ldots, \totalnumsteps_{\ind + 1}}$.  Under
this condition, we have that
\begin{align}
  \label{eq:gamma_max}
  \gamup( \theta^t) \leq 1 - \frac{ p \dndelta^{2 \seqalpha{\ind + 1}}
  }{1 + \dndelta^{2 \seqalpha{\ind + 1}}} \leq \underbrace{1 - \frac{p
      \dndelta^{2 \seqalpha{\ind + 1}} }{2}}_{\rdefn \gamupbar}
  \quad \text{ for all } t \in \braces{\totalnumsteps_{\ind}, \ldots,
    \totalnumsteps_{\ind + 1} - 1},
\end{align}
where the last step follows from the fact that $\dndelta \in [0, 1]$
and $\seqalpha{\ind} \geq 0$.  From our earlier
definition~\eqref{eq:time_steps}, we have $\totalnumsteps_{\ind + 1} =
\totalnumsteps_{\ind} + \numsteps_{\ind}$.  We split the remainder of
our proof in two parts, primarily to handle the constants. 
First, we show that
\begin{subequations}
\begin{align}
  \label{eq:theta_t_by_2}
  \enorm{\theta^{ \totalnumsteps_{\ind} + \lceil \numsteps_{\ind +
        1}/2 \rceil}} \leq \uniconnew \sqrt{2} \sd \dndelta^{
    \seqalpha{\ind + 1}} ,
\end{align}
where $\uniconnew = (2 \uniconone \sd / p + 1)$ is a constant
independent of $\obs, \dims, \delta$ and $\smallthreshold$.  Next we
use this result to show that
\begin{align}
  \label{eq:theta_T}
  \enorm{ \theta^{ \totalnumsteps_{\ind + 1}}} = \enorm{ \theta^{
      \totalnumsteps_{\ind} + \numsteps_{\ind + 1}}} \leq \sqrt{2} \sd
  \dndelta^{ \seqalpha{\ind + 1}},
\end{align}
\end{subequations}
which completes the proof of the induction step.  We now prove these two
claims one by one.


\paragraph{Proof of claim~\eqref{eq:theta_t_by_2}} 
\label{par:proof_of_claim_eq:theta_t_by_2}

Applying the triangle inequality yields
\begin{align}
  \label{eq:theta_t_i_first_recursion}  
  \enorm{ \theta^{ \totalnumsteps_{\ind} + 1}} & \leq \enorm{
    \updateM_\obs( \theta^{ \totalnumsteps_{\ind}}) - \updateM(
    \theta^{ \totalnumsteps_{\ind}})} + \enorm{ \updateM( \theta^{
      \totalnumsteps_{\ind}})} \nonumber \\
      & \stackrel{(i)}{\leq} \uniconone \sd
  \cdot \sqrt{2} \sd \dndelta^{ \seqalpha{\ind}}\cdot \sqrt{ \dndelta}
  + \gamup( \theta^{ \totalnumsteps_{\ind}}) \enorm{ \theta^{
      \totalnumsteps_{\ind}}},
\end{align}
where step~(i) follows from using $r =
\sqrt{2} \sd \dndelta^{ \seqalpha{\ind}}$ in the
event~\eqref{eq:sup_m_bound} and applying Theorem~\ref{thm:pop_over}. 
Recursing the inequality~\eqref{eq:theta_t_i_first_recursion} for $T \leq
\lceil
\numsteps_{\ind}/2 \rceil$ steps, and invoking the
bound~\eqref{eq:gamma_max}, i.e., $\gamup( \theta^t) \leq
\gamupbar$ for all \mbox{$t \in \braces{ \totalnumsteps_{\ind},
    \ldots, \totalnumsteps_{\ind} + T}$}, we obtain that
\begin{align*}
\enorm{ \theta^{ \totalnumsteps_{\ind} + T}} & \leq \uniconone \sd \cdot
\sqrt{2}\sd \dndelta^{ \seqalpha{\ind}} \cdot \sqrt{ \dndelta} \cdot (1 +
\gamupbar + \ldots + \gamupbar^{T - 1}) + \gamupbar^T
\enorm{ \theta^{ \totalnumsteps_{\ind}}} \\
& \leq \frac{ \uniconone \sd \cdot \sqrt{2} \sd
  \dndelta^{ \seqalpha{\ind}} \cdot \sqrt{ \dndelta} }{1 - \gamupbar} +
\sqrt{2} \sd \gamupbar^T \dndelta^{ \seqalpha{\ind}} \notag \\
& \stackrel{(i)}{\leq} \uniconone \sd \cdot \sqrt{2} \sd \cdot (2 / p)
\cdot \dndelta^{ \seqalpha{\ind} + 1 / 2 - 2 \seqalpha{\ind + 1}} +
e^{- T p \dndelta^{ 2 \seqalpha{\ind + 1}} / 2} \cdot \sqrt{2} \sd
\dndelta^{ \seqalpha{\ind}} \\
& \stackrel{(ii)}{\leq} \sqrt{2} \sd \dndelta^{ \seqalpha{\ind} + 1 /
  2 - 2 \seqalpha{ \ind + 1}} \cdot (2 \uniconone \sd / p + 1) \\
& \stackrel{(iii)}{=} \uniconnew \sqrt{2} \sd \dndelta^{
  \seqalpha{\ind + 1}},
\end{align*}
where step~(i) follows from the inequality~\eqref{eq:gamma_max} and
the consequent bound $\gamupbar \leq e^{-
  p / (2 \dndelta^{ 2 \seqalpha{ \ind + 1}})}$. Furthermore, in step~(ii), we
used the following bound
\begin{align}
\label{eq:gamma_t_bound}  
\gamupbar^T \leq e^{- T p \dndelta^{2 \seqalpha{\ind + 1}} / 2} \leq
\dndelta^{ 1 / 2 - 2 \seqalpha{\ind + 1}} \quad \text{for } T \geq \frac{(1 -
  4 \seqalpha{\ind + 1})}{p \dndelta^{ 2 \seqalpha{\ind +
      1}}}\log \frac{1}{ \dndelta},
\end{align}
and in step~(iii) we invoked the relation~\eqref{eq:alpha_seq}, i.e.,
$3 \seqalpha{\ind + 1} = 1/2 + \seqalpha{\ind}$.  The claim now
follows from noting that $T = \lceil \numsteps_{\ind} / 2 \rceil$
satisfies the condition of equation~\eqref{eq:gamma_t_bound}.


\paragraph{Proof of claim~\eqref{eq:theta_T}} 
\label{par:proof_of_claim_eqre}

The proof of this claim makes use of arguments similar to those used
above in the proof of claim~\eqref{eq:theta_t_by_2}.  Starting at time
$\totalnumsteps_{\ind} + \lceil \numsteps_{\ind + 1}/2 \rceil$, and applying
the triangle inequality, we find that
\begin{align*}
\enorm{ \theta^{ \totalnumsteps_{\ind} + \lceil \numsteps_{ \ind + 1} / 2 \rceil
+
    1}} \leq \uniconone \sd \cdot \uniconnew \sqrt{2} \sd
\dndelta^{ \seqalpha{\ind + 1}} \cdot \sqrt{ \dndelta} + \gamupbar
\enorm{ \theta^{ \totalnumsteps_{\ind} + \lceil \numsteps_{\ind + 1} / 2 \rceil}},
\end{align*}
where we have used the bound~\eqref{eq:sup_m_bound} with $r =
\uniconnew \sqrt{2} \sd \dndelta^{ \seqalpha{\ind + 1}}$.  Repeating this
inequality for $T \geq \frac{( 1 - 4 \seqalpha{ \ind +
    1})}{p \dndelta^{2 \seqalpha{ \ind + 1}}} \log \frac{1}{\dndelta}$ steps
and performing computations similar to the proof above, we find that
\begin{align*}
\enorm{ \theta^{ \totalnumsteps_{\ind} + \lceil \numsteps_{\ind + 1} / 2 \rceil +
    T}} & \leq \sqrt{2} \sd \dndelta^{ \seqalpha{\ind + 1} + 1 / 2 -
  2 \seqalpha{\ind + 1}} \cdot \uniconnew \cdot (2 \uniconone \sd / p + 1)  \nonumber \\
  & =
      {\uniconnew}^2 \dndelta^{ 1 / 2 - 2 \seqalpha{\ind + 1}} \cdot \sqrt{2} \sd
      \dndelta^{ \seqalpha{\ind + 1}}.
\end{align*}
Observe that $2 \seqalpha{\ind + 1} - 1 / 2 \leq - 2 \smallthreshold$
for all $\ind \leq \imax - 1$ and that the sample size given by
bound~\eqref{eq:n_bound} satisfies $\obs \geq (\uniconnew)^{4 / \smallthreshold}
\sd^2(\dims + \log(2 \imax /
\delta)) $; together, these facts imply that ${ \uniconnew}^2 \dndelta^{
1 / 2 - 2 \seqalpha{ \ind + 1}}
\leq 1$.  The claim now follows.

 \newcommand{\Aevent}{\ensuremath{\mathcal{A}}}
\newcommand{\Bevent}{\ensuremath{\mathcal{B}}}

\subsection{Proof of Theorem~\ref{theorem:sample_lower_bound}} 
\label{sub:proof_of_theorem_theorem:lower_bound_fixed_point}

We now turn to the proof of the lower bound on the accuracy of EM
fixed points, as stated in Theorem~\ref{theorem:sample_lower_bound}.
Recalling the definition~\eqref{eq:sample_em_operator} of a
sample-based EM operator $\mupdate_\obs$, the fixed point relation
$\mupdate_\obs(\fixedpointtheta) = \fixedpointtheta$ can be re-written
as
\begin{align}
\label{eq:fixed_point}  
\fixedpointtheta = \frac{1}{ \obs} \sum_{i = 1}^\obs X_i \tanh
\parenth{ \frac{ \fixedpointtheta X_i}{ \sd^2}},
\end{align}
where  $\fixedpointtheta$ denotes a fixed point solution.
Our proof makes use of the following elementary bounds on the
hyperbolic tangent function:
\begin{subequations}
\begin{align}
\label{eq:tanh_positive}
 x \cdot \tanh( \scalar x) & \geq \scalar x^2 - \frac{1}{3} \scalar^3
 x^4, \quad \text{for } \scalar \geq 0, x \in \real  \quad \text{ and}\\
\label{eq:tanh_negative}     
x \cdot \tanh( \scalar x) & \leq \scalar x^2 - \frac{1}{3} \scalar^3
x^4, \quad \text{for }\scalar < 0, x \in \real.
\end{align} 
\end{subequations}
In order to keep the proof self-contained, we prove these bounds at
the end of this section.  Now plugging in $\scalar = \fixedpointtheta /
\sd^2$ and using the bound~\eqref{eq:tanh_positive} for the case
$\fixedpointtheta \geq 0$ and the bound~\eqref{eq:tanh_negative} for
the case $\fixedpointtheta < 0$, we find that
\begin{align*}
\vert \fixedpointtheta \vert \geq \frac{ \vert \fixedpointtheta
  \vert}{ \sd^2} \cdot \frac{1}{ \obs} \sum_{i = 1}^\obs X_i^2 -
\frac{ \vert \fixedpointtheta \vert^3}{3 \sd^6} \cdot
\frac{1}{\obs}\sum_{i = 1}^\obs X_i^4.
\end{align*}
Denoting $Y_{i} = X_{i} / \sd $ for $i \in
\brackets{n}$ and re-arranging the inequality above yields that
\begin{align}
    |\fixedpointtheta|^3 \geq \frac{3 \sigma^2
   \parenth{ \frac{1}{ \obs}\sum_{i = 1}^\obs Y_i^2 -
     1} |\fixedpointtheta|}{\frac{1}{ \obs}\sum_{i = 1}^\obs Y_i^4}.
     \label{eqn:Theta_cube_lower_bd}
\end{align}
Note that the random variables $Y_i \stackrel{\textrm{i.i.d.}}{\sim} 
\Ncal (0, 1)$ and thereby the
quantity on the RHS above is a ratio of empirical moments of Gaussian
random variables. In order to obtain a lower bound for
$|\fixedpointtheta|$ from the
inequality~\eqref{eqn:Theta_cube_lower_bd}, we exploit a few standard
probability bounds for the concentration of moments of standard
Gaussian distribution (refer to Theorem~5.2 in
Inglot~\cite{inglot2010inequalities} and \mbox{Theorem~6.7 in
  Janson~\cite{janson1997gaussian}}). In particular, we have
  \begin{subequations}
    \begin{align}
  \Prob \brackets{ \frac{ \sum_{i = 1}^\obs Y_i^2}{ \obs} - 1 \geq
    \frac{ \log 17}{ \obs} + \frac{ \sqrt{ \log 17} / 4}{ \sqrt{
        \obs}}} & \geq \frac{1}{17}, \qquad \text{ and}
        \label{eq:first_bnd}\\
    \Prob \brackets{ \frac{ \sum_{i = 1}^\obs Y_i^4}{ \obs} \leq c} & \geq 1 -
    \frac{1}{34}, \label{eq:second_bnd}
    \end{align}      
  \end{subequations}
where $c = (e \log(34) / 2)^2 \sqrt{6}$. 
Plugging these bounds in the inequality~\eqref{eqn:Theta_cube_lower_bd},
we find that
\begin{align}
    \frac{ \frac{1}{ \obs}\sum_{i = 1}^\obs Y_i^2 -
     1}{\frac{1}{ \obs}\sum_{i = 1}^\obs Y_i^4} 
     \geq \frac{ \sqrt{ \log 17} / 4}{c} \cdot
\frac{1}{ \sqrt{\obs}},
\label{eqn:const-prob_lb}
\end{align}
with probability at least $1/34$, where we have used the following elementary fact for two events $\Aevent_1, \Aevent_2$:
\begin{align*}
 \Prob(\Aevent_1 \cap \Aevent_2) = \Prob(\Aevent_1) + \Prob(\Aevent_2)-
\Prob(\Aevent_1\cup \Aevent_2) \geq \Prob(\Aevent_1) + \Prob(\Aevent_2)-
1.
\end{align*}

Let $\Aevent$ denote the event that ``there are at least two non-zero
fixed points $\fixedpointtheta$''.  We claim that $\Aevent$ is
contained within the event $\Bevent$, defined as follows
\begin{align}
  \label{eqn:set_inclusion}
  \Aevent \subseteq \underbrace{\braces{\frac{1}{n} \sum_{i =
        1}^{\obs} Y_i^2 > 1}}_{= \,: \Bevent}.
\end{align}
Deferring the proof of this claim to the end of this section, we now complete
the proof of our original claim.
Note that the event $\Bevent$ is implied by the event in the
bound~\eqref{eq:first_bnd}, and hence we have non-zero fixed points
under the same event.  Now, for any of these non-zero fixed points,
dividing both sides of inequality~\eqref{eqn:Theta_cube_lower_bd} by
$|\fixedpointtheta|$ and using the bound~\eqref{eqn:const-prob_lb}, we
conclude that
\begin{align*}
    \Prob \brackets{ |\fixedpointtheta|^2 \geq \frac{ 
    \sqrt{ \log 17} / 4}{c_1} \cdot \frac{1}{ \sqrt{\obs}}}
     \geq \frac{1}{34},
\end{align*}
as claimed in the theorem.

We now prove our earlier claims~\eqref{eq:tanh_positive}-\eqref{eq:tanh_negative}
and \eqref{eqn:set_inclusion}.

\paragraph{Proof of the bounds~\eqref{eq:tanh_positive} 
and~\eqref{eq:tanh_negative}} 
\label{par:proof_of_claims_eq:tanh_positive_and_eq:tanh_negative_}
Note that it suffices to establish that
\begin{align}
  \label{eq:tanh_bound}
  y \tanh( y) \geq y^2 - y^4 / 3,\quad\mbox{ for all $y \in \real$. }
\end{align}
Indeed, a change of variable $y = \scalar x$ and dividing both sides
by $\scalar$ yield the desired claims.  Using the fact that $\tanh(y)
= (e^y - e^{- y})/( e^y + e^{- y})$, it remains to verify that
\begin{align*}
y (e^y - e^{- y}) \geq (e^y + e^{- y}) \cdot (y^2 - y^4 / 3)
\end{align*}
or equivalently that
\begin{align*}
\sum_{k=0}^\infty \frac{2 y^{2 k + 2}}{(2 k + 1)!}  \geq
\sum_{k=0}^\infty \frac{2 y^{2 k}}{(2 k)!} \cdot (y^2 - y^4 / 3) =
\sum_{k=0}^\infty \frac{2 y^{2 k + 2}}{(2k)!} \cdot (1 - y^2 / 3),
\end{align*}
which simplifies to
\begin{align*}
\sum_{k = 1}^\infty \frac{y^{2 k + 2}}{(2 k + 1)!}  \parenth{\frac{1}{(2 k +
    1)!} - \frac{1}{(2 k)!} +\frac{1}{3 (2k - 2)!}} \geq 0.
\end{align*}
Since only even powers of $y$ exist on both sides in the power series,
it suffices to verify that each coefficient on the LHS is
non-negative.  After some algebra, we find that the condition above
reduces to
\begin{align*}
\frac{1}{2 k + 1} + \frac{(2 k - 1)2 k}{3} -1\geq 0, \quad \mbox{for
  all $k \geq 1$.}
\end{align*}
This elementary inequality is indeed true, and so the proof is complete.

\paragraph{Proof of set-inclusion~\eqref{eqn:set_inclusion}}
Consider the (random) function $g : \real \to \real$ such that
\mbox{$g( \theta) \defn M_n( \theta) - \theta$.}  Also introduce the
shorthand \mbox{$Z = \sum_{i=1}^n Y_i^2 / n$,} and note that $\Bevent
= \{ Z > 1 \}$. Note that any fixed point of the operator $M_n$ is a
zero of the function $g$ and vice-versa.  It is easy to see that the
function $g$ is twice continuously differentiable.  Now for the event
$\{ Z > 1 \}$, the function $g$ satisfies $g(0) = 0$ and $g'(0) > 0$
and hence there exists $c>0$ such that $g(c) > 0$.  Furthermore for
any sequence of $Y_i$'s, we have that $\lim_{\theta \to \infty} g(
\theta) = - \infty$.  Putting the two pieces together, we obtain that
under the event $\Bevent$, the function $g$ has at least one strictly
positive root.  Since $g$ is an odd function, we also have that under
the same event, the function $g$ has at least one strictly negative
root. The claim now follows.


\section{Proofs of auxiliary lemmas} 
\label{sec:proofs_of_auxiliary_lemmas}
In this appendix, we present the proofs of the auxiliary lemmas used in
the proofs of our main theorems.

\subsection{Proof of Lemma~\ref{lemma:bound_EM_operators}}
\label{sec:proof_of_lemma_bound_em_operator}

The proof of this lemma is based on standard arguments to derive
Rademacher complexity bounds~\cite{Vaart_Wellner_2000, Wai19}.  First,
we reduce the supremum of random variables over an uncountable set to
a finite maximum.  We then symmetrize with Rademacher variables, and
then apply the Ledoux-Talagrand contraction inequality.  Finally, we
exploit tail bounds on sub-Gaussian and sub-exponential random
variables so as to obtain the desired claim.

Let $\sphere^{d} = \left\{ \myvec \in \Rspace^{d} \mid \enorm{ \myvec}
= 1 \right\}$ denote the unit sphere in $d$-dimensions.  Then, we have
\begin{align*}
Z \defn \sup_{\theta \in \ball(0, r)} \enorm{ \updateM_\obs (\theta) -
  \updateM (\theta)} & = \sup_{ \theta \in \ball(0, r)} \sup_{\myvec \in
  \sphere^\dims} ( \updateM_\obs (\theta) - \updateM (\theta))^\top
\myvec \\
& =\sup_{\myvec \in \sphere^\dims} \underbrace{ \sup_{\theta \in
    \ball(0, r)}(\updateM_\obs (\theta) - \updateM (\theta))^\top
  \myvec}_{\rdefn Z_\myvec}.
\end{align*}
Note that $Z$ is defined as the supremum over the sphere
$\sphere^\dims$.  Using a standard discretization argument, we reduce
our problem to a maximum over a finite cover. In particular, we denote
$\left\{\myvec^{1}, \ldots, \myvec^{\cover} \right\}$ a 1/8-cover for
the unit sphere $\sphere^{\dims}$.  It is well known that we can find
such a set with $\cover \leq 17^\dims$.  Using the usual
discretization argument (see Chapter 6,~\cite{Wai19}), we can show
that
\begin{align}
\label{eq:Z_Z_u}
Z \leq \max_{j \in [\cover]} \frac{8 Z_{\myvec^j}}{7}.
\end{align}
Consequently, it is sufficient to study the behavior of the random
variables $Z_{\myvec^j}$ for $j \in [\cover]$, which we do next.

Substituting this relation into the definitions~\eqref{EqnPopMupdate}
and~\eqref{eq:sample_em_operator} of the EM operators $\updateM_\obs$
and $\updateM$, respectively, we find that
\newcommand{\zonek}[1][k]{A_{\myvec^{#1}}}
\newcommand{\ztwok}[1][k]{B_{\myvec^{#1}}}
\begin{align*}
Z_{\myvec^k} &= \sup \limits_{\theta \in \ball(0,r)} \biggr \{
\dfrac{1}{\obs} \sum \limits_{i=1}^{\obs} ( 2
\weightFun_{\theta}(X_{i}) - 1 ) X_{i}^\top \myvec^k - \Exs \brackets{(2
  \weightFun_{\theta}(X) - 1) X^\top \myvec^k} \biggr \} \\
  &= \parenth{\frac{1}{n}\sum_{i=1}^n X_i\tp \myvec^k - \Exs[X\tp \myvec^k]}
  \cdot (2\weight-1)
   \\
  &\qquad+ \sup \limits_{\theta \in
  \ball
  (0,r)} \biggr \{
\parenth{\dfrac{1}{\obs} \sum \limits_{i=1}^{\obs}  2
(\weightFun_{\theta}(X_{i})-\weight) X_{i}^\top - \Exs \brackets{2
  (\weightFun_{\theta}(X)-\weight) \cdot X^\top }}\myvec^k \biggr \} \\
  &= \zonek + \ztwok,
\end{align*}
and thereby that
\begin{align}
\label{eq:z_bound}
  Z \leq \frac{8}{7} \left \{ \max_{j \in [\cover]} \zonek[j] +
  \max_{j \in [\cover]} \ztwok[j] \right \}.
\end{align}
Noting that $X_i\tp \myvec^j \stackrel{i.i.d.}{\sim} \NORMAL(0,
\sigma^2)$ and that $\cover \leq 17^\dims$, standard concentration
bounds yield that
\begin{subequations}
\label{eq:bounds_a_and_b}
\begin{align}
  \Prob\brackets{\max_{j \in [\cover]} \zonek[j] \leq \abss{2\weight-1}
  \sigma
  \sqrt{
  \frac{\dims
    \log 17
    +
    \log
    (1/\delta)}{\obs}}} \geq 1-\delta.
\end{align}
On the other hand, for the random variables $\ztwok[j]$, we claim the following
bound
\begin{align}
\label{eq:bound_bk}
\Prob\brackets{
\max_{ j \in [\cover]} \ztwok[j] \leq c' r \sigma^{2} 
 \sqrt{
\dfrac{d + 
    \log(1 / \delta)}{n} }} \geq 1-\delta.
\end{align}
\end{subequations}
Putting the bounds~\eqref{eq:z_bound} and \eqref{eq:bounds_a_and_b} 
together yields the claim of the lemma.

\paragraph{Proof of the bound~\eqref{eq:bound_bk}}
Using a symmetrization bound~\cite{Vaart_Wellner_2000, Wai19}, we find
that
\begin{align}
\label{eqn:bound_EM_operators_zero}
\Exs[\exp(\lambda \ztwok) ] \leq \Exs \brackets{\exp
  \parenth{\sup_{\theta \in \ball(0,r)} \dfrac{2 \lambda}{n} \sum
    \limits_{i=1}^{n} \radem_{i} 2(\weightFun_{\theta}(X_{i})-\weight)X_
    {i}^\top
    \myvec^k)} },
\end{align}
for any
$\lambda >0$ where $\radem_{1}, \ldots, \radem_{n}$ denote
i.i.d.\ Rademacher random variables which are independent of
$\braces{X_i, i \in [\obs]}$.  We now make use of the Ledoux-Talagrand
contraction inequality for Lipschitz functions of Rademacher
processes~\cite{Ledoux_Talagrand_1991}.  For each fixed $x$, define
the function $f_x(\theta) \defn 2 \left( \weightFun_{\theta}(x) -
\weight \right)$.  Since $\weightFun_0(x) = \weight$ for all $x$, we
have $f_x(0) = 0$, so that this function is centered.  Moreover, for
any pair $(\theta, \theta')$, we have
\begin{align*}
\abss{f_x(\theta) - f_x(\theta')} & = \abss{2 \weightFun_{\theta}(x) -
  2 \weightFun_{\theta'}(x)} \leq 2\abss{\theta^\top x -
  (\theta')^{\top} x},
\end{align*}
so that $f_x(\theta)$ is $2$-Lipschitz in the quantity $\theta^\top
x$.  Consequently, applying the Ledoux-Talagrand contraction
inequality for this map, we find that 
\begin{align*}
& \hspace{- 4 em} \Exs \left[ \exp \left( \sup_{\theta \in \ball(0,r)}
    \dfrac{2\lambda}{n} \sum_{i=1}^n \radem_{i}(2
    (\weightFun_{\theta}(X_{i})-\weight) X_{i}^ {\top} \myvec^k)
    \right) \right] \nonumber \\ & \hspace{5 em} \leq \Exs \left[ \exp
    \left( \sup_{\theta \in \ball(0,r)} \dfrac{4 \lambda}{n}
    \sum_{i=1}^n \radem_i \theta^\top X_i X_i^\top \myvec^k \right)
    \right].
\end{align*}
Furthermore, using the fact that $\enorm{\myvec^k} =1$ and the
standard bound $\myvec^\top \mymat \myvectwo \leq
\enorm{\myvec} \opnorm{\mymat} \enorm{\myvectwo}$, we obtain that
\begin{align}
& \hspace{ -4 em} \Exs \left[ \exp \left( \sup_{\theta \in \ball(0,r)} \dfrac{4 \lambda}{n}
  \sum_{i=1}^n \radem_i \theta^\top X_i X_i^\top \myvec^k \right) \right] \nonumber \\
& \hspace{3 em} \leq \Exs \left[ \exp \left( \sup \limits_{\theta \in \ball(0, r)}
  4 \lambda \Vert{ \myvec^k} \Vert_2 \enorm{ \theta} \,
  \opnorm{ \dfrac{1}{n} \sum_{i=1}^{n} \radem_{i} X_{i} X_{i}^{\top}}
  \right) \right] \nonumber \\
\label{eqn:bound_EM_operators_first}
& \hspace{3 em} \leq \Exs \left[ \exp \left( 4 \lambda r\,
  \opnorm{\dfrac{1}{n} \sum_{i=1}^{n} \radem_{i} X_{i} X_{i}^{\top}}
  \right) \right].
\end{align}  
We now make two auxiliary claims: \\
\begin{subequations}
(a) The operator norm of the matrix $\sum_{i=1}^{n}
  \radem_{i}X_{i} X_{i}^{\top}/n$ can be bounded as follows:
\begin{align}
\label{eqn:bound_EM_operators_second}
\opnorm{\dfrac{1}{n} \sum_{i=1}^{n} \radem_{i} X_{i} X_{i}^{\top}} \leq
2 \max_{j \in [\cover]} \abss{\dfrac{1}{n} \sum_{i=1}^{n}
  \radem_{i} (X_{i}^\top \myvec^j)^2 }.
\end{align}
(b) For all $(i, j) \in [\obs] \times [\cover]$, we have
\begin{align}
\label{eqn:sub_exp_bound}
\Exs \brackets{\exp(t \radem_{i}(X_{i}^\top \myvec^j)^{2})} \leq
\exp(17 \cdot t^{2} \sd^4) \quad \mbox{for all $\abss{t} \leq
  \frac{1}{4 \sd^2}$.}
\end{align}
\end{subequations}
The claim~\eqref{eqn:bound_EM_operators_second} follows by the same
discretization argument that we used before (see Chapter 6 in the
book~\cite{Wai19}). We return to prove the
claim~\eqref{eqn:sub_exp_bound} at the end of this appendix.

Taking these claims as given for the moment, let us now complete the
proof of the bound~\eqref{eq:bound_bk}. 
Putting together the pieces, we find that
\begin{align*}
\Exs[ \exp( \lambda \ztwok) ] &
\stackrel{\mathmakebox[\widthof{======}]{(\mathrm{bnd.}~\eqref{eqn:bound_EM_operators_zero},
    ~\eqref{eqn:bound_EM_operators_first} ) }} {\leq} \Exs \left [
  \exp \left( 4\lambda r\, \opnorm{\dfrac{1}{n} \sum_{i=1}^{n}
    \radem_{i} X_{i} X_{i}^{\top}} \right) \right] \\
& \stackrel{\mathmakebox[\widthof{======}]{(\mathrm{eqn.}
    ~\eqref{eqn:bound_EM_operators_second} ) }} {\leq} \Exs \brackets{
  \exp \parenth{\max_{j \in [\cover]} \frac{8 \lambda r}{\obs}
    \abss{\sum_{i = 1}^\obs \radem_i (X_i^\top \myvec^j)^2 }} } \\
& \stackrel{\mathmakebox[\widthof{======}]{}} {\leq} \Exs \brackets{
  \exp \parenth{ \max_{j \in [\cover]} \frac{- 8 \lambda
      r}{\obs}{\sum_{i=1}^\obs \radem_i (X_i^\top \myvec^j)^2 }} }
\nonumber \\
& \hspace{ 8 em} + \Exs \brackets{ \exp \parenth{ \max_{j \in [\cover]}
    \frac{8 \lambda r}{\obs}{\sum_{i=1}^\obs \radem_i
      (X_i^\top \myvec^j)^2 }} }\\
&
\stackrel{\mathmakebox[\widthof{======}]{(\mathrm{eqn.}
~\eqref{eqn:sub_exp_bound}
    ) }} {\leq} 2 \cover \cdot \prod_{i=1}^\obs \exp \parenth{17
  \cdot \frac{64 \lambda^2 r^2}{\obs^2} \cdot \sd^4}
\end{align*}
for any $\abss{\lambda} \leq \obs/(32 r \sd^2)$.  Now invoking the
inequality $2 \cover \leq 34^\dims \leq e^{4 \dims}$, we find that
\begin{align*}
\Exs[\exp(\lambda \ztwok) ] \leq \exp \parenth{c \cdot
  \lambda^2 r^2 \sd^4 / \obs + 4 \dims} \quad \text{ for any } k \in
    [\cover],
\end{align*}
and sufficiently small $\lambda$.  Now using the fact that
$\cover \leq e^{3\dims}$, we obtain that
\begin{align*}
\Exs[ \exp(\lambda \max_{ j \in [\cover]}
  \ztwok[j])] 
  \leq \cover \exp( c \cdot
  4 \lambda^{2} r^{2} \sigma^{4} / \obs + 4 \dims)
  \leq \exp( c \cdot \lambda^{2} r^{2} \sigma^{4} / \obs + 7 \dims),
\end{align*}
for some constant $c$.
Using the standard approach for applying Chernoff bound, we have that
\begin{align*}
\max_{ j \in [\cover]} \ztwok[j] \leq c r \sigma^{2} \cdot \sqrt{
\dfrac{d +
    \log(1 / \delta)}{n} }, \quad \mbox{with probability at least $1 -
  \delta$,}
\end{align*}
as long as $n \geq c'(\dims +\log(1/\delta)$ for some suitable constants
$c$ and $c'$. \\

\noindent We now return to prove our earlier
claim~\eqref{eqn:sub_exp_bound}.


\paragraph{Proof of claim~\eqref{eqn:sub_exp_bound}} 
\label{par:proof_of_claim_eq:sub_exp_bound}

Noting that $X_i\tp \myvec^j \stackrel{i.i.d.} {\sim} \Ncal(0,\sigma^2) $,
and the fact that square of a sub-Gaussian random variable with parameter
$\sigma$ is a
sub-exponential random variable with parameter $(4\sigma^2, 4\sigma^2)$,
we obtain the following inequality~\cite{Vershynin_2011}:
\begin{align}
\label{eqn:square_sub_gaussian}
    \Exs \brackets{ \exp \parenth{ t (X_{i}^\top \myvec^j)^2 - t \Exs( X_
    {i}^\top \myvec^j)^2}}
    \leq e^{16 t^2 \sigma^4} 
    \quad \text{for all }\abss{t} \leq \frac{1}{4 \sigma^2}.
\end{align}
Noting that the random variable $\radem_i$ is independent of $X_i^\top
\myvectwo$, we find that
\begin{align*}
\Exs \brackets{\exp(t \radem_i (X_i^\top \myvec^j)^2)} &= \frac{1}{2}
\Exs\brackets{\exp(t (X_i^\top \myvec^j)^2)} + \frac{1}{2}
\Exs\brackets{\exp(- t(X_i^\top \myvec^j)^2)} \\
& \stackrel{(i)}{\leq} e^{16 t^2 \sigma^4} \cdot \frac{1}{2}
\brackets{e^{t \sigma^2}+e^{- t \sigma^2}}\\ &\stackrel{(ii)}{\leq}
e^{17 t^2 \sigma^4},
\end{align*}
for all $\abss{t} \leq \frac{1}{4 \sigma^2}$.  In asserting the above
sequence of steps, we have applied the
inequality~\eqref{eqn:square_sub_gaussian} along with the fact that
$\Exs(X_{i}^\top \myvec^j)^2 = \sigma^2$ to conclude step~(i), and
step~(ii) follows from the inequality $e^{x} + e^{- x} \leq 2 e^{ x^2}$
for all $x \in \real$. The claim now follows.


\subsection{Proof of Lemma~\ref{LemOperatorB}}
\label{AppLemOperatorB}

We begin with the elementary inequality \mbox{$\exp(y) + \exp(- y)
  \geq 2 + y^{2}$,} valid for all $y \in \real$, to find that
\begin{align}
\label{eqn:contractive_over_specified_first}  
[\mymat_{ \theta_u}]_{11} & = \Exs_V \left[ \frac{ V_{1}^{2}}{\left(
    \exp \left( - \| \theta_{u} \|_{2} V_1 / \sd \right) + \exp
    \left( \| \theta_{u} \|_{2} V_1 / \sd \right) \right)^{2}}\right] \nonumber \\
& \leq
 \Exs_{V_1} \left[ \frac{ V_{1}^{2}}{\left(2 +
    V_{1}^{2}\| \theta_{u} \|_{2}^{2} / \sigma^{2} \right)^{2}}\right].
\end{align}
Letting $\mathbb{I}_A$ denote the indicator random variable for event $A$,
i.e., it takes value $1$ when the event $A$ occurs and $0$ otherwise.
Then we have
\begin{align}
\Exs \left[ \frac{ V_1^{2}}{\left( 2 +
    V_1^{2}\| \theta_{u} \|_{2}^{2} / \sigma^{2} \right)^{2}}\right] & =
\Exs \left[ \frac{ V_1^{2}}{ \left( 2 +
    V_1^{2} \| \theta_{u} \|_{2}^{2} / \sigma^{2} \right)^{2}}
  \mathbb{I}_{\left\{ | V_1 | \leq 1 \right\}} \right] \nonumber \\
  & \hspace{7 em} + \Exs
\left[ \frac{ V_1^{2}}{\left( 2 +
    V_1^{2} \| \theta_{u} \|_{2}^{2} / \sigma^{2} \right)^{2}}
  \mathbb{I}_{\left\{ |V_1| > 1 \right\}} \right] \nonumber \\
\label{eqn:contractive_over_specified_second}
& \leq \dfrac{1}{4} \Exs \left[ V_1^{2} \mathbb{I}_{ \left\{ |V_1| \leq 1
     \right\}} \right] + \Exs \left[ \frac{ V_1^{2}}{\left( 2 +
    \| \theta_{u} \|_{2}^{2} / \sigma^{2} \right)^{2}} \mathbb{I}_{ \left\{|V_1| >
    1 \right\}} \right].
\end{align}
Here the final inequality is a consequence of the following observation:
\begin{align}
\label{eq:observation_V_1}
\frac{V_1^{2}}{\left( 2 + V_1^{2}
  \| \theta_{u} \|_{2}^{2} / \sigma^{2} \right)^{2}} \leq
\begin{cases}
  \displaystyle \frac{ V_1^{2}}{4}
  \quad& \text{if}\ \abss{ V_1} \leq 1,
  \\ \displaystyle \frac{ V_1^{2}}{(2 + 
  \| \theta_{u} \|_{2}^{2} / \sigma^{2})^{2}}
  \quad& \text{if} \ \abss{V_1} > 1.
\end{cases}
\end{align}
Putting the inequalities~\eqref{eqn:contractive_over_specified_first}
and~\eqref{eqn:contractive_over_specified_second} together, we
conclude that
\begin{align*}
[\mymat_{\theta_u}]_{11} \leq
\dfrac{1}{4} \Exs \left[ V_1^{2} \mathbb{I}_{\left\{ |V_1| \leq 1 \right\}} \right] +
\Exs \left[ \frac{V_1^{2}}{\left( 2 +
    \| \theta_{u} \|_{2}^{2} / \sigma^{2} \right)^{2}} \mathbb{I}_{ \left\{ |V_1| >
    1 \right\}} \right],
\end{align*}
where $V_1 \sim \Ncal(0,1)$.  Define $p_1 \defn
\Exs \brackets{ V_1^{2} \mathbb{I}_{\left\{ |V_1| \leq 1 \right\}}}$.  Then
we can directly verify that
$\Exs \brackets{ V_1^{2} \mathbb{I}_{ \left\{ |V_1| \geq 1 \right\}}} = 1 - p_1$ and
consequently obtain that
\begin{align}
  \label{eqn:contractive_over_specified_third}
[\mymat_{ \theta_u}]_{11} \leq \frac{p_1}{4} + \frac{(1 - p_1)}{4}
\frac{1}{(1 + \enorm{\theta_u}^2 / (2\sd^2))^2}.
\end{align}
Now we bound the entries $[\mymat_{\theta_u}]_{jj}$, $j \neq 1$.
Using the standard inequality \mbox{$\exp(y) + \exp(- y) \geq 2 + y^{2}$} once
again and noting that $V_j \stackrel{\textrm{i.i.d.}}
{\sim}\Ncal(0,1)$, we find that
\begin{align}
[\mymat_{ \theta_u}]_{jj} & = \Exs_V
\left[ \frac{ V_j^2}{ \left( \exp \left(- \| \theta_{u} \|_{2} V_1 / \sd \right) 
+ \exp \left( \| \theta_{u} \|_{2} V_1 / \sd \right)\right)^{2}} \right] \nonumber \\
& \leq
\Exs_{V_1} \left[ \frac{1}{ \left(2 +
    V_1^{2} \| \theta_{u} \|_{2}^{2} / \sigma^{2} \right)^{2}} \right]. 
    \label{eqn:contractive_over_specified_fourth}
\end{align}
Similar to observation~\eqref{eq:observation_V_1}, we also have that
\begin{align}
[\mymat_{ \theta_u}]_{jj} & \leq \dfrac{1}{4} 
\Exs \left[ \mathbb{I}_{ \left\{ |V_1| \leq 1 \right\}} \right] +
  \Exs \left[ \frac{1}{ \left(2 +
      \| \theta_{u}\|_{2}^{2} / \sigma^{2}
       \right)^{2}} \mathbb{I}_{ \left\{ |V_1| >
      1 \right\}} \right].
\label{eqn:contractive_over_specified_fifth}
\end{align}
Define $p_2 \defn \Prob\parenth{|V_1| \leq 1 }$.
Putting together the
inequalities~\eqref{eqn:contractive_over_specified_fourth} and
\eqref{eqn:contractive_over_specified_fifth}, we obtain that
\begin{align}
[ \mymat_{\theta_u}]_{jj} \leq \frac{p_2}{4} + \frac{(1 - p_2)}{4}
\frac{1}{(1 + \enorm{ \theta_u}^2 / (2\sd^2))^2} \quad \text{ for } j = 2,
\ldots, d.
   \label{eqn:contractive_over_specified_sixth}
\end{align}
Note that
\begin{align*}
p_2 = \Prob\parenth{|V_1| \leq 1 }=\Exs \left[\mathbb{I}_{ \left\{
|V_1| \leq 1 \right\}}\right]
> \Exs \left[ V_1^{2} \mathbb{I}_{ \left\{|V_1| \leq 1 \right\}} \right]
= p_1,
\end{align*}
and consequently, the bound on the RHS
of inequality~\eqref{eqn:contractive_over_specified_sixth} is larger
than the RHS of
inequality~\eqref{eqn:contractive_over_specified_third}.  As a result,
we have
\begin{align*}
  \opnorm{\mymat_{\theta_u}} = \max_{j \in [d]}
         [\mymat_{\theta_u}]_{jj} \leq \frac{p_2}{4} +
         \frac{(1 - p_2)}{4}
         \frac{1}{(1 + \enorm{\theta_u}^2 / (2 \sd^2))^2},
\end{align*}
where $p_2 =  \Prob\parenth{|V_1| \leq 1 }$ and the 
claim~\eqref{eq:operator_B} follows.


\subsection{Proof of Lemma~\ref{LemRabbits}}
\label{AppLemRabbits}
We now prove the claim~\eqref{eq:theorem_lower_reduction} in two
steps.  First, we show that $\brackets{ \mymatTheta}_{jj} \geq
\brackets{ \mymatTheta}_{11}$ for all $j \in [ \dims]$. Then, we derive
the claimed lower bound for $\brackets{ \mymatTheta}_{11}$.


\paragraph{Proof of $\brackets{ \mymatTheta}_{jj} 
\geq \brackets{ \mymatTheta}_{11}$}
For all $j \neq 1$, by changing the order of integration, we obtain
that
\begin{align*}
\brackets{ \mymatTheta}_{jj} & = \int \limits_{0}^{1} \Exs_{V} \left[
  \frac{ V_j^2}{\left( \exp \left(- \| \theta_{u} \|_{2}V_1 / \sd \right) +
    \exp \left(\| \theta_{u} \|_{2} V_1 / \sd \right)  \right)^{2}} \right] du
\\
& \stackrel{(i)}{=} \int \limits_{0}^{1} \Exs_{V} \left[
  \frac{1}{ \left( \exp \left(- \| \theta_{u} \|_{2} V_1 / \sd \right) +
    \exp \left(\| \theta_{u} \|_{2} V_1 / \sd \right) \right)^{2}} \right] du
\\
& \stackrel{(ii)}{\geq} \int \limits_{0}^{1} \Exs_{V} \left[
  \frac{V_1^2}{ \left( \exp \left(- \| \theta_{u} \|_{2} V_1 / \sd \right) +
    \exp \left(\| \theta_{u} \|_{2} V_1 / \sd \right) \right)^{2}} \right] du
= \brackets{ \mymatTheta}_{11},
\end{align*}
where step (i) follows since $\Exs[ V_j^2] = 1$, and from the
fact that the random variables $\{ V_j, j \neq 1\}$ are independent of
the random variable $V_1$.  Finally, note that the map $|V_1| \mapsto
V_1^2$ is increasing in $| V_1|$, and for any fixed value of
$\theta_{u}$ the function ${| V_1| \mapsto
  \frac{1}{ \left( \exp \left(- \| \theta_{u} \|_{2} V_1 / \sd\right) +
    \exp \left(\| \theta_{u} \|_{2} V_1 / \sd \right) \right)^{2}}}$ is a
decreasing function of $|V_1|$; consequently, step (ii) above follows
from a standard application of the Harris inequality.\footnote{Harris
  inequality: Given any pair of functions $(f,g)$ such that the
  function $f: \real \mapsto \real$ is increasing, and the function
  $g: \real \mapsto \real$ is decreasing. Then for any real-valued
  random variable $U$ we have ${\Exs \parenth{f(U) g(U)} \leq
    \Exs( f(U)) \Exs( g(U))}$. Here we have assumed that all three
  expectations exist and are finite.  }
\paragraph{Lower bound on $\brackets{ \mymatTheta}_{11}$}
Substituting $\theta_u = u \theta$ in the expression for
$\brackets{ \mymatTheta}_{11}$, and noting that $\int_{0}^1(e^{a u} +
e^{- a u})^{- 2} du = \tanh(a) / (4 a)$, we obtain that
\begin{align*}
\brackets{ \mymatTheta}_{11} &= \Exs_{V_1} \left[ \int \limits_{0}^{1}
  \frac{ V_1^2}{\left( \exp \left(- u \enorm{ \theta} V_1 / \sd \right) +
    \exp \left(u \enorm{ \theta} V_1 / \sd \right) \right)^{2}} du \right] \\
& = \Exs_{V_1} \left[ \frac{ \sd V_1}{4 \enorm{\theta}}
  \tanh{ \frac{ \enorm{ \theta} V_1}{ \sd}} \right] \\ & \stackrel{(i)}{=}
\frac{1}{4} \Exs_{V_1} \brackets{\sech^2
  \parenth{ \frac{ \enorm{ \theta} V_1}{\sd}}} \nonumber \\
& = \Exs_{V_1} \left[
  \frac{1}{\left( \exp \left(- \| \theta \|_{2} V_1 / \sd \right) +
    \exp \left(\| \theta \|_{2} V_1 / \sd \right) \right)^{2}} \right],
\end{align*}  
where step (i) follows from Stein's Lemma for standard Gaussian
distribution\footnote{Stein's Lemma: For any differentiable function
  $g: \real \mapsto \real$, we have $\Exs \brackets{Y g(Y)} =
  \Exs \brackets{g'(Y)}$ where \mbox{$Y \sim \Ncal(0,1)$} provided that
  expectations $\Exs \brackets{g'(Y)}$ and $\Exs \brackets{Y g(Y)}$
  exist.}. Expanding the expression in the denominator, we obtain
\begin{align}
\brackets{ \mymatTheta}_{11} & = \Exs_{V_1} \left[ \frac{1}{2 +
    \exp \left(- 2\|\theta\|_{2} V_1 / \sd \right) +
    \exp \left( 2\| \theta \|_{2} V_1 / \sd \right) } \right] \notag \\
\label{eq:b_11}
& \geq \frac{1}{ \Exs_{V_1} \left[ 2 +
    \exp \left(- 2 \| \theta \|_{2} V_1 / \sd \right) +
    \exp \left(2 \| \theta \|_{2} V_1 / \sd \right) \right] },
\end{align}
where the last inequality follows from Jensen's inequality applied with
the convex function $y \mapsto \frac{1}{y}$ on $y \in (0,\infty)$.
Noting that $V_1 \sim \Ncal(0,1)$ and consequently that $\Exs_{V_1}
\parenth{\exp \parenth{y V_1}} = e^{y^2 / 2}$ for all $y \in \real$, we
obtain that
\begin{align}
  \label{eq:b_11_last}
\Exs_{V_1} \Big[ 2 + \exp \left(- 2\| \theta \|_{2} V_1 / \sd \right) +
  \exp \left( 2\| \theta \|_{2} V_1 / \sd \right) \Big] & = 2 (1 +
e^{2 \enorm{ \theta}^2 / \sd^2}) \nonumber \\
& \leq 4 (1 + 2 \enorm{ \theta}^2 / \sd^2),
\end{align}
for all $\theta$ such that $\enorm{\theta}^2 \leq {5\sd^2}/{8}$.  Here
the last step follows from the fact that $e^t \leq 1 + 2t $, for all
$t \in [0, 5/4]$.  Putting the bounds~\eqref{eq:b_11} and
\eqref{eq:b_11_last} together yields the claimed lower bound \mbox{for
  $[\mymatTheta]_{11}$.}


\subsection{Proof of Lemma~\ref{LemNonExpansive}}
\label{AppLemNonExpansive}
Note that it is sufficient to show that a one-step update is
non-expansive.  Without loss of generality, we can assume that
$\enorm{ \theta^t} \geq \sqrt{2} \sd \dndelta^{ \seqalpha{\ind + 1}}$,
else we can start with the assumption $\enorm{ \theta^t} \geq \sqrt{2}
\sd \dndelta^{ \seqalpha{ \ind + 2}}$ and mimic the arguments that
follow.  Applying the triangle inequality, we find that
\begin{align*}
  \enorm{ \theta^{t + 1}} = \enorm{ \updateM_\obs( \theta^t )} & \leq
  \enorm{ \updateM_\obs ( \theta^t) - \updateM ( \theta^t)} +
  \enorm{ \updateM ( \theta^t)} \\
& \stackrel{(i)}{\leq} \uniconone \sd \cdot \sqrt{2} \sd
  \dndelta^{ \seqalpha{\ind}}\cdot \sqrt{ \dndelta} + \gamup( \theta^t)
  \enorm{ \theta^t} \\
& \stackrel{(ii)}{\leq} \uniconone \sd \cdot \sqrt{2} \sd
  \dndelta^{\seqalpha{\ind}}\cdot \sqrt{\dndelta} + \parenth{1 - \frac{p
      \dndelta^{2 \seqalpha{\ind + 1}} }{2}} \sqrt{2}\sd
  \dndelta^{\seqalpha{\ind}} \\
& = \parenth{1 - \frac{p \dndelta^{2\seqalpha{\ind + 1}} }{2} + \uniconone
    \sd \sqrt{\dndelta} } \sqrt{2}\sd \dndelta^{\seqalpha{\ind}},
\end{align*}
where step~(i) follows from the bound~\eqref{eq:sup_m_bound} with $r =
\enorm{\theta^t} \leq \sqrt{2} \sd \dndelta^{\seqalpha{\ind}}$, and applying
Theorem~\ref{thm:pop_over}, and
step~(ii) follows from the condition that $\enorm{\theta^t} \geq
\sqrt{2}\sd \dndelta^{\seqalpha{\ind + 1}} $ and consequently that
$\gamup(\theta^t) \leq 1 - p\dndelta^{2\seqalpha{\ind + 1}} / 2$.
Note that $2\seqalpha{\ind+1} - 1 / 2 \leq - 2 \smallthreshold$ for
all $\ind \leq \imax - 1$ and $\dndelta \leq 1$.  As a result, for
${\obs \geq (2 \uniconone \sd / p)^{1 / (2 \smallthreshold)} \sd^2
  \dims \log(2 \imax / \delta)}$, we have that $\dndelta^{2
  \seqalpha{\ind + 1} - 1 / 2} \geq \dndelta^{- 2 \smallthreshold}
\geq 2 \uniconone \sd / p$ and thereby that
\begin{align*}
 \parenth{1 - \frac{p \dndelta^{2 \seqalpha{\ind + 1}} }{2} +
   \uniconone \sd \sqrt{ \dndelta} } \leq 1.
 \end{align*}
Putting all the pieces together yields the
result.


\section{Additional results}
  \label{sec:proofs_of_additional results}
  
In this appendix, we provide additional results to support several
claims in the paper.


\subsection{Initial conditions}
\label{sec:ini_unbalanced}

The next lemma shows that  for the mixture models analyzed in this
paper, the population EM operator $\updateM$ maps any $\theta \in
\real^\dims$ to a ball of radius $\sqrt{2/ \pi}$ namely, the radius
is independent of the dimension $\dims$.  Given the uniform bounds
provided in Lemma~\ref{lemma:bound_EM_operators}, loosely speaking,
$\enorm{\updateM_\obs(\theta^0)}$ is upper bounded by $\sqrt{2/\pi} +
\enorm{\theta^0}\sqrt{d/n}$ with high probability. Consequently, we
make an implicit assumption while elaborating our results that
$\enorm{\theta^0}$ is a constant and does not scale with dimension (provided
that the sample size is large enough to keep the second term small).

\begin{lemma}
\label{lemma:bound_initialization}
For both the unbalanced or balanced model fits~\eqref{EqModelFitNew},
when the true model is standard Gaussian, we have
\begin{align*} 
  \enorm{\updateM(\theta^{0})} & \leq \sqrt{\frac{2}{\pi}}
  \quad\text{for any}\quad \theta^0 \in \real^\dims.
\end{align*}  
\end{lemma}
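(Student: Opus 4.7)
The plan is to exploit two observations: (i) the posterior weight satisfies $w_\theta(x) \in [0,1]$ for every $x$, $\theta$, and mixture weight $\weight$, so that $|2w_\theta(x) - 1| \leq 1$ pointwise, regardless of whether the fit is balanced or unbalanced; and (ii) the standard normal $X \sim \Ncal(0, I_d)$ is rotationally invariant. Combining these with a one-dimensional reduction collapses the bound on $\enorm{\updateM(\theta^0)}$ to the mean absolute value of a standard Gaussian.

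First, I would apply a rotational reduction. Choose an orthogonal matrix $R$ satisfying $R\theta^0 = \enorm{\theta^0} e_1$, and set $V \defn RX$, which is again $\Ncal(0, I_d)$ by rotational invariance. Since $w_{\theta^0}(R^\top V)$ depends on $V$ only through the inner product $(\theta^0)^\top R^\top V = \enorm{\theta^0} V_1$, the scalar $g(V_1) \defn 2 w_{\theta^0}(R^\top V) - 1$ is a measurable function of $V_1$ alone, and $|g(V_1)| \leq 1$ almost surely in both the balanced and unbalanced cases.

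Second, I would use orthogonal invariance of the Euclidean norm to write
\begin{align*}
  \enorm{\updateM(\theta^0)}
  \;=\; \enorm{R^\top \Exs[g(V_1)\,V]}
  \;=\; \enorm{\Exs[g(V_1)\,V]}.
\end{align*}
Because $V_2, \ldots, V_d$ are independent of $V_1$ with mean zero, every coordinate except the first vanishes, giving $\enorm{\updateM(\theta^0)} = |\Exs[g(V_1) V_1]|$. Applying the pointwise bound $|g| \leq 1$ together with the standard Gaussian absolute-moment formula yields
\begin{align*}
  \bigl|\Exs[g(V_1)\,V_1]\bigr| \;\leq\; \Exs[|V_1|] \;=\; \sqrt{2/\pi},
\end{align*}
which is the claimed dimension-free bound.

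I do not anticipate any real obstacle in this argument: it reduces to a one-line Gaussian moment calculation once the rotational reduction is performed, and the derivation is uniform in $\theta^0$, $\dims$, and $\weight$, so the single proof covers both the unbalanced and balanced fits of equation~\eqref{EqModelFitNew} simultaneously. The only mild subtlety is recognizing that the key pointwise bound $|2w_\theta - 1|\leq 1$ is entirely structural---it follows from the fact that $w_\theta$ is a conditional probability---and does not require any separation or concavity hypothesis on the mixture model.
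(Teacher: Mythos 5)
Your proposal is correct and follows essentially the same route as the paper's own proof: an orthogonal change of basis reducing $\updateM(\theta^0)$ to its first coordinate, followed by the pointwise bound $|2w_{\theta^0}-1|\leq 1$ and the Gaussian absolute-moment identity $\Exs|V_1|=\sqrt{2/\pi}$. No gaps.
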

\begin{proof}
The proof of this lemma is a direct consequence of the change of basis ideas
used in the proofs of Theorems~\ref{ThmUnbalanced} and \ref{thm:pop_over}
before. Using the definition of $\updateM$ and applying the transformation
$V = R X/ \sigma$ where $R \in \Rspace^{d \times d}$ is an orthonormal matrix
such that $R \theta = \| \theta \|_{2} e_{1}$,  and $e_{1}$ is the first
canonical basis vector in $\realdim$, we find that 
\begin{align*}
  \enorm{\updateM(\theta)} 
  & = \enorm{\Exs_{X} \brackets{ \parenth{\frac{\weight e^{- \frac{\theta^\top
      X}{\sd^2}} - (1 - \weight) e^{ \frac{\theta^\top X}{\sd^2}}}{\weight e^{- \frac{\theta^\top
      X}{\sd^2}} + (1 - \weight) e^{ \frac{\theta^\top X}{\sd^2}}}} X}}\\
  & = \enorm { \Exs_{V} \brackets{\parenth{\frac{\weight 
  e^{- \frac{\enorm{\theta} V_{1}}{\sd}} - (1 - \weight) 
  e^{ \frac{\enorm{\theta} V_{1}}{\sd}}}{\weight 
  e^{- \frac{\enorm{\theta} V_{1}}{\sd}} + (1 - \weight) 
  e^{ \frac{\enorm{\theta} V_{1}}{\sd}}}} V}} \\
  & = \abss { \Exs_{V_{1}} \brackets{\parenth{\frac{\weight 
  e^{- \frac{\enorm{\theta} V_{1}}{\sd}} - (1 - \weight) 
  e^{ \frac{\enorm{\theta} V_{1}}{\sd}}}{\weight 
  e^{- \frac{\enorm{\theta} V_{1}}{\sd}} + (1 - \weight) 
  e^{ \frac{\enorm{\theta} V_{1}}{\sd}}}} V_{1}}} \\
  & \leq \Exs_{V_{1}} \brackets{\abss{\frac{\weight 
  e^{- \frac{\enorm{\theta} V_{1}}{\sd}} - (1 - \weight) 
  e^{ \frac{\enorm{\theta} V_{1}}{\sd}}}{\weight 
  e^{- \frac{\enorm{\theta} V_{1}}{\sd}} + (1 - \weight) 
  e^{ \frac{\enorm{\theta} V_{1}}{\sd}}}} \abss{V_{1}}} 
  \leq \Exs_{V_{1}} \brackets{ \abss{V_{1}}} 
  = \sqrt{\frac{2}{\pi}}.
\end{align*}
The claim now follows.
\end{proof}
\subsection{Behavior of EM when the weight is unknown}
\label{sec:sunknown_mixtures}
We now discuss the case when the mixture weight $\weight \in (0, 1/ 2]$
in the model fit~\eqref{EqModelFitNew} is assumed to be unknown and is estimated
jointly with the (single) location parameter $\mean$ using EM. 
(The scale parameter is still assumed to be known and fixed to the true
value.)
For our case, given a set of i.i.d. samples $\braces{X_i}_{i=1}^\obs$,
the sample EM operators 
$\updateM_{1, \obs}:\realdim \times (0, 1/ 2] \mapsto (0, 1/ 2]$
and $\updateM_{2, \obs}: \realdim \times (0, 1/ 2]   \mapsto \realdim$ for the weight and location
parameters respectively take the form
\begin{align}
\label{eq:sample_em_operator_unknown}
\updateM_{1, \obs} (\theta, \weight) \defn \frac{1}{\obs} \sum_{i=1}^n \weightFun_
{\theta, \weight} (X_i),\quad\text{and}\quad
\updateM_{2, \obs}(\theta, \weight) \defn \frac{1}{\obs} \sum_{i=1}^n (2
\weightFun_{\theta, \weight}(X_i) - 1) X_i,
\end{align}
where the weight function $\weightFun_{\theta, \weight}$ is defined as
\begin{align}
  \label{EqnWeightFun_unknown}
  \weightFun_{\theta, \weight}(x) \mydefn
  \frac{\weight \exp
    \parenth{-\frac{\enorm{\theta - x}^2}{2\sd^2}}}{\weight \exp
    \parenth{-\frac{\enorm{\theta - x}^2}{2\sd^2}} + (1-\weight) \exp
    \parenth{-\frac{\enorm{\theta + x}^2}{2\sd^2}}}.
\end{align} 
Taking the infinite sample limit, we can define the corresponding population
EM operators $\updateM_{1}$ and \mbox{$\updateM_{2}$} for the weight and location parameters as follows:
\begin{align}
\label{eq:population_em_operator_unknown}
\updateM_{1} (\theta, \weight) \defn \Exs_{X} \brackets{ \weightFun_{\theta,
\weight}
(X)}, \ \ \text{and}\ \ 
\updateM_{2} (\theta, \weight) \defn \Exs_{X} \brackets{ (2 \weightFun_
{\theta, \weight}
(X) - 1) X},
\end{align}
where the expectation is over the true model $X \sim \NORMAL(0, I_\dims)$.
The next results characterize the contraction properties of these population
EM operators.
\begin{lemma}
\label{lemma:contraction_pop}
For any $\mean \in \Rspace^{d}$ and $\weight \in (0, 1/ 2]$, 
the population EM operators $\updateM_{1}$ and $\updateM_{2}$ satisfy
\begin{align}
\abss{ \updateM_{1} (\theta, \weight) - \weight} \leq \frac{ \parenth{
1 - c\contracasym^2} \enorm{ \mean}}{2},\ \text{and}\
\enorm{ \updateM_{2} (\theta, \weight)} \leq \parenth{ 1 - \frac{\contracasym^2}
{2}} \enorm{ \mean} \label{eq:contract_bound_unknown}
\end{align}
where $\contracasym \defn 1 - 2\weight \in (0, 1)$ and $c \in (1/ 2, 1)$
denotes a universal constant.
\end{lemma}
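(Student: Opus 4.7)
\textbf{The plan} is to prove the two inequalities separately, with the second following immediately from existing work and the first requiring a symmetrization step plus a careful scalar Gaussian estimate.

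\textbf{For the second inequality on $\updateM_2$,} observe that for any fixed weight $\weight$, the map $\theta \mapsto \updateM_2(\theta, \weight) = \Exs_X[(2\weightFun_{\theta, \weight}(X) - 1) X]$ is functionally identical to the population EM operator $\updateM(\theta) = \Exs_X[(2\weightFun_\theta(X) - 1) X]$ analyzed in Theorem~\ref{ThmUnbalanced}: comparing the definitions~\eqref{EqnPopMupdate} and~\eqref{eq:population_em_operator_unknown}, the weight functions coincide and the operators agree pointwise in $\theta$. Hence the bound $\enorm{\updateM_2(\theta, \weight)} \leq (1 - \contracasym^2/2)\enorm{\theta}$ is precisely the contractivity statement~\eqref{EqnUnbalancedPop}, and its proof in Appendix~\ref{sub:proof_of_theorem_thm:ThmUnbalanced} applies verbatim, since that argument only uses that $\weight \in (0, 1/2)$ is a fixed scalar.

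\textbf{For the first inequality on $\updateM_1$,} the starting point is the identity $\weightFun_{0, \weight}(X) = \weight$ almost surely, which gives $\updateM_1(0, \weight) = \weight$. Using the algebraic simplification
\begin{align*}
\weightFun_{\theta, \weight}(x) - \weight = \frac{\weight(1-\weight)(1 - e^{-2\theta^\top x / \sd^2})}{\weight + (1-\weight) e^{-2\theta^\top x / \sd^2}},
\end{align*}
symmetrizing over the distributionally equal pair $(X, -X)$, and applying the orthogonal change of basis $V = RX/\sd$ with $R\theta = \enorm{\theta} e_1$ (the same device as in the proof of Theorem~\ref{ThmUnbalanced}), a short manipulation produces the exact identity
\begin{align*}
\updateM_1(\theta, \weight) - \weight = \frac{(1-\contracasym^2)\,\contracasym}{8} \Exs_{V_1}\!\left[\frac{(1 - e^{-2 t V_1})^2}{\bigl(\weight + (1-\weight) e^{-2 t V_1}\bigr) \bigl(\weight e^{-2 t V_1} + (1-\weight)\bigr)}\right],
\end{align*}
with $t \defn \enorm{\theta}/\sd$ and $V_1 \sim \Ncal(0, 1)$. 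This representation is the crux of the argument, because it already exhibits the prefactor $(1-\contracasym^2)$ and shows that $\updateM_1(\theta, \weight) \geq \weight$ whenever $\weight \leq 1/2$.

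\textbf{The main obstacle} is to bound the scalar Gaussian integral so as to turn this identity into the advertised bound of the form $(1 - c\contracasym^2)\enorm{\theta}/2$. I would combine two complementary estimates on the integrand: (i) a uniform pointwise bound $\leq 1/\bigl(\weight(1-\weight)\bigr) = 4/(1-\contracasym^2)$, obtained by a direct case analysis on the sign of $V_1$ (the exponential factor in the numerator cancels against one of the factors in the denominator), which yields the $\theta$-independent estimate $|\updateM_1(\theta, \weight) - \weight| \leq \contracasym/2$ (also recoverable from the a priori inclusion $\updateM_1(\theta, \weight) \in [\weight, 1/2]$ for $\weight \leq 1/2$ that follows from monotonicity of $\weightFun$); and (ii) a refined estimate using $(1 - e^{-2 t V_1})^2 \leq 4 t^2 V_1^2 e^{4 t |V_1|}$ in a bulk region $\{|V_1| \leq K\}$ combined with Gaussian tail control on the complement, which gives a bound linear in $t$ for moderate $t$. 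Balancing (i) and (ii) with an appropriate threshold $K$ produces an intermediate estimate of the form $|\updateM_1(\theta, \weight) - \weight| \leq C\,\contracasym\,\enorm{\theta}$ with a universal constant $C$; absorbing this into $(1 - c\contracasym^2)/2$ via the elementary inequality $2C\contracasym \leq 1 - c\contracasym^2$ valid on $\contracasym \in [0, 1]$ for any $c \leq 1 - 2C$ completes the proof, provided $K$ is tuned so that $C < 1/4$. No new ideas are required beyond careful Gaussian moment bookkeeping; the subtlety lies entirely in the tuning of $K$ to deliver a specific $c \in (1/2, 1)$.
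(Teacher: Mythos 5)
Your treatment of $\updateM_2$ matches the paper exactly: for fixed $\weight$ the operator coincides with the one analyzed in Theorem~\ref{ThmUnbalanced}, and the paper likewise just invokes that proof. Your symmetrized identity for $\updateM_1(\theta,\weight)-\weight$ is also correct (the algebra, including the prefactor $(1-\contracasym^2)\contracasym/8$, checks out). The gap is in how you bound the resulting Gaussian integral. Writing $t=\enorm{\theta}/\sd$, you want $|\updateM_1(\theta,\weight)-\weight|\le C\contracasym t$ with $C<1/4$, combining the uniform bound $\contracasym/2$ (which only helps for $t\ge 1/(2C)>2$) with the pointwise bound $(1-e^{-2tV_1})^2\le 4t^2V_1^2e^{4t|V_1|}$ on $\{|V_1|\le K\}$ plus a Gaussian tail. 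This cannot close in the intermediate regime $t\in[c_0,2]$: since you can only lower-bound the denominator by its worst case $\weight(1-\weight)=(1-\contracasym^2)/4$, the bulk term contributes about $2\contracasym t^2e^{4tK}$, which at $t=1$, $K\ge 1$ exceeds $C\contracasym t$ by two to three orders of magnitude; and even for small $t$, the tail forces $K\gtrsim\sqrt{2\log(1/(Ct))}$, after which the bulk requirement $te^{4tK}\le C/4$ already fails at $t=C/4$. Shrinking $C$ only widens the uncovered regime. Numerically the true ratio $|\updateM_1-\weight|/(\contracasym t)$ peaks near $0.2$ around $t\approx 1$, so there is little slack: a pointwise estimate of the numerator against a worst-case denominator is doomed, because the denominator is small precisely where the numerator is large and the two must be handled jointly.

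The paper sidesteps this by never forming a global identity: it writes $\weightFun_{\theta,\weight}(X)-\weight=\int_0^1\nabla_\theta\weightFun_{\theta_u,\weight}(X)^\top\theta\,du$ along $\theta_u=u\theta$, which extracts the factor $\enorm{\theta}$ exactly, and then bounds the remaining quantity $\Exs\big[|V_1|/(\weight e^{-yV_1}+(1-\weight)e^{yV_1})^2\big]$ uniformly over $y$ by the same event decomposition used for Theorem~\ref{ThmUnbalanced}: the squared denominator is at least $1$ off the event $\{e^{yV_1}\in[1,\tfrac{1+\contracasym}{1-\contracasym}]\}$ and at least $1-\contracasym^2$ on it, and that event is contained in $\{V_1\ge 0\}$, which yields the uniform bound $(1-c\contracasym^2)/(1-\contracasym^2)$ with $c=1-\Exs[|V_1|\mathbb{I}(V_1\ge 0)]=1-1/\sqrt{2\pi}\approx 0.60$; multiplying by $2\weight(1-\weight)\enorm{\theta}$ gives the claim. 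If you wish to keep your identity, you would need an analogous joint treatment of numerator and denominator (e.g.\ a case analysis on the sign of $V_1$ that pairs the large values of $(1-e^{-2tV_1})^2$ with the correspondingly large denominator), rather than a bulk/tail split against the worst-case denominator.
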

\noindent See the end of this appendix for the proof.

An immediate consequence of Lemma~\ref{lemma:contraction_pop} is the
following. Let $\overline{ \weight} < 1/ 2$ be any fixed constant.
Consider the population EM sequence $(\weight^{t}, \mean^{t})$
generated as $(\weight^{t + 1}, \mean^{t + 1})= (\updateM_{1}
(\theta^t, \weight^{t},), \updateM_{2} (\mean^{t} ,\weight^{t}))$
starting with an initialization $(\weight^{0}, \mean^{0}) \in (0, 1/2]
  \times \real^\dims$ such that
\begin{align}
\label{eq:intial_cond_prop}
  \weight^0 + \frac{ \enorm{ \mean^0}}{(1 - 2 \overline{ \weight})^2} \leq
  \overline{ \weight}.
\end{align}
Then we have
\begin{align*}
  \weight^{t} \leq \overline{ \weight}, \ \quad \ 
  \enorm{ \mean^{t}} \leq \parenth{1 - \frac{\parenth{1 
  - 2 \overline{ \weight}}^2}{2}}^{t + 1} \enorm{ \theta^0}.
\end{align*}
In simple words, the weight sequence $\weight^t$ remains bounded above by
$\bar{\weight}$ and the sequence $\theta^t$ for the location parameter 
converges geometrically to $\thetastar = 0$.
On the other hand, when the initialization does not satisfy the condition~\eqref{eq:intial_cond_prop},
the convergence of location parameter can become sub-linear, especially
when $\weight^0 \approx 1/2$. In simple words, if the
initial mixture is highly unbalanced, we would observe a geometric convergence
and as we show in the next corollary sample EM estimates would have
a statistical error of order $\obs^{-\frac{1}{2}}$.
When the condition~\eqref{eq:intial_cond_prop} is violated, loosely speaking
the initial parameters are close to those of a balanced mixture
and EM would depict the slower convergence on both algorithmic and (consequently)
statistical fronts similar to the results stated in Theorem~\ref{thm:pop_over}.
However, a rigorous proof for the later case is beyond the scope of this
paper and we only provide some numerical evidence in Figure~\ref{fig:unknown_weights}.

\begin{figure}[h]
  \begin{center}
    \begin{tabular}{cc}
      \widgraph{0.45\textwidth}{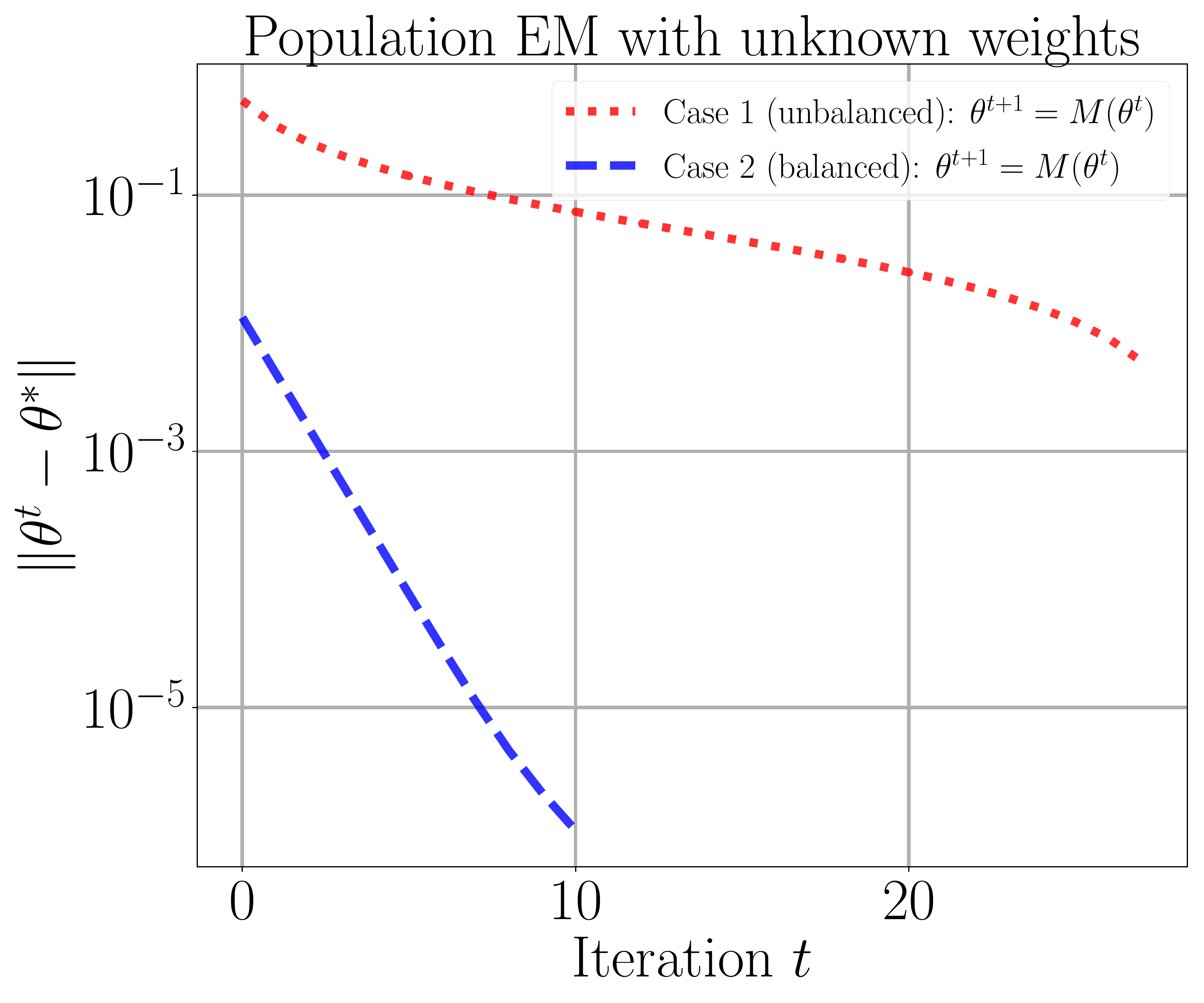} 
      & \widgraph{0.45\textwidth}{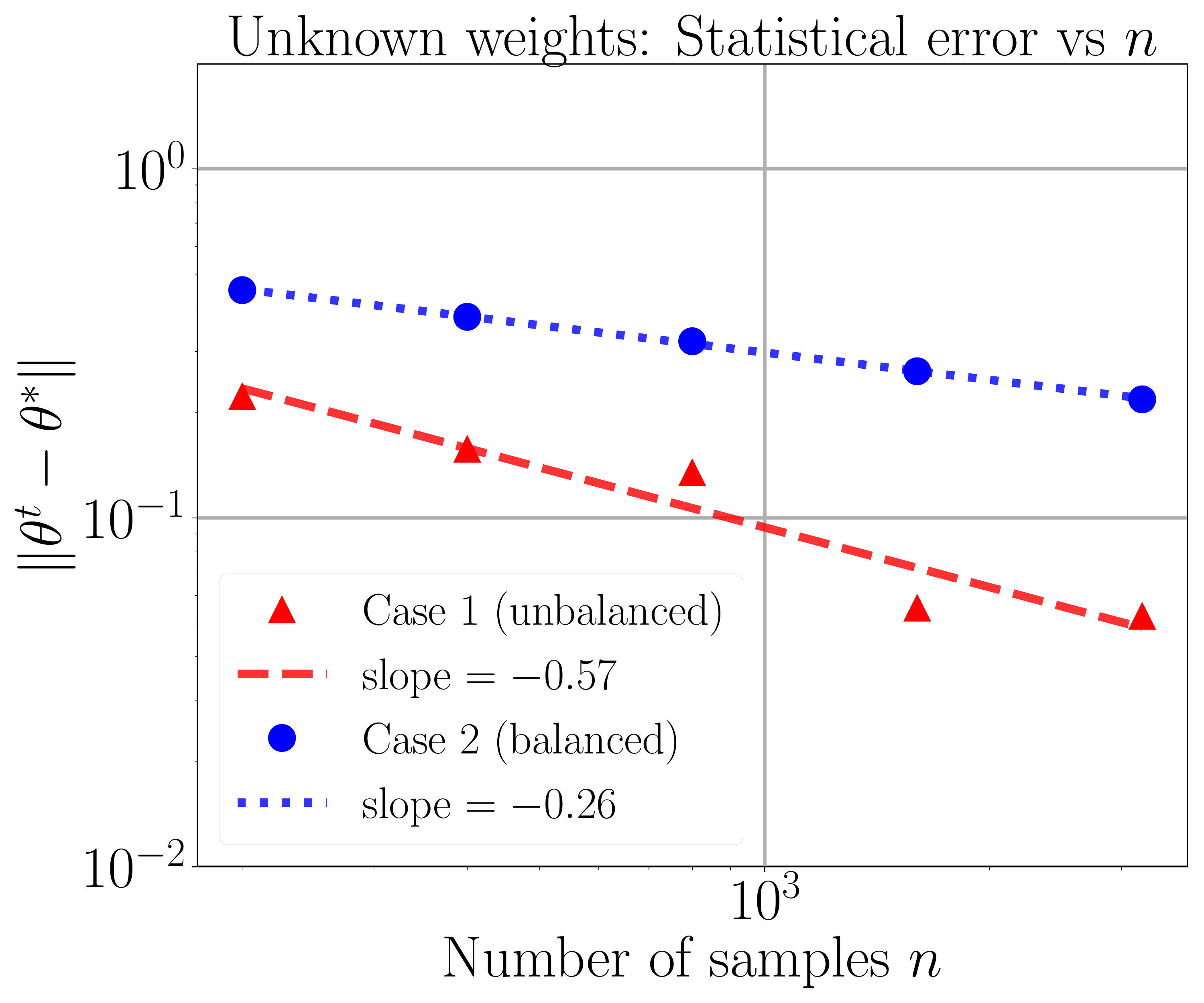}
      \\
      (a) & (b)
    \end{tabular}
    \caption{Behavior of EM for the two-mixture over-specified fit~\eqref{EqModelFit}
    with unknown weights where the true model is $\NORMAL(0, I_2)$.
    We consider two different initializations. Case 1 (unbalanced):
    When the initialization condition~\eqref{eq:intial_cond_prop} is met
    (in particular we set $\weight$ much smaller than $\frac{1}{2}$).
    In Case 2 (balanced), we initialize the weight parameter very close
    to $\frac{1}{2}$. Panel~(a) characterizes the population EM updates
    and panel~(b) depicts the statistical error with sample size $\obs$
    for the two cases. We see that when the condition~\eqref{eq:intial_cond_prop}
    is met, EM converges in few steps within error $\obs^{-\frac{1}{2}}$
    error and on the other hand when the initial weight is near $\frac{1}
    {2}$ we observe a slow convergence of EM with a larger statistical
    error of order $\obs^{-\frac{1}{4}}$.
    }
    \label{fig:unknown_weights}
  \end{center}
\end{figure}

\begin{corollary} 
\label{ThmUnbalanced_unknown}
Consider the sample EM sequences $\pi^{t + 1} = \updateM_{1, \obs}( \weight^
{t})$ and $\theta^{t+1} = \updateM_{2, \obs}(\theta^t)$ with an initialization
that satisfies the condition~\eqref{eq:intial_cond_prop} for some $\overline{\pi}<1/2$.
Then for any fixed $\delta \in (0, 1)$ and  \mbox{$\obs \geq
\unicon \, \dims \log(1/\delta) \frac{\sd^2}{\overline{\rho}^4}$}, 
we have
\begin{align}
  \label{EqnUnbalancedSample_unknown}
\weight^{t} \leq \overline{\weight}, \ \quad \ \enorm{\theta^t} \leq \enorm{\theta^0}
\brackets{ \parenth{ 1 -
    {\overline{\rho}^2} /{2} }^t + \frac{\uniconnew
    \sd^2}{\overline{\rho}^2} \sqrt{\frac{\dims
      \log(1/\delta)}{\obs}}\;},
\end{align}
with probability at least $1-\delta$, where $c, c'$ and $\overline{\rho}
= 1-2\overline{\pi} \in (0, 1)$ are universal constants.
\end{corollary}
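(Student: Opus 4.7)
The plan is to mirror the two-step strategy used in the proof of Theorem~\ref{ThmUnbalanced}: couple the population contraction results of Lemma~\ref{lemma:contraction_pop} with a uniform empirical-process bound on the joint sample EM operator $(M_{1,n}, M_{2,n})$, and then iterate. The extra wrinkle relative to Theorem~\ref{ThmUnbalanced} is that the weight is now an estimated quantity, so the induction has to simultaneously track (i) the invariant $\pi^t \leq \bar\pi$ (which is exactly what turns on the contraction factor $\bar\rho$) and (ii) the geometric decay of $\|\theta^t\|_2$.

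The first step is to establish, by a Rademacher-complexity/discretization argument exactly analogous to the proof of Lemma~\ref{lemma:bound_EM_operators}, a uniform deviation bound of the form
\begin{align*}
\sup_{\|\theta\|_2 \leq \|\theta^0\|_2,\,\pi \in [0,\bar\pi]} \Big\{ \|M_{2,n}(\theta,\pi) - M_2(\theta,\pi)\|_2 + |M_{1,n}(\theta,\pi) - M_1(\theta,\pi)| \Big\} \leq \epsilon_n(\delta),
\end{align*}
with probability at least $1-\delta$, where $\epsilon_n(\delta) \lesssim \sigma\bar\rho \sqrt{(d+\log(1/\delta))/n}$. For the location component, the argument is a direct transcription of Appendix~\ref{sec:proof_of_lemma_bound_em_operator}, with $\pi$ an additional continuous parameter handled by a further $1/n$-net (which does not change the scaling). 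For the weight component, I would exploit the exact identity $M_{1,n}(0,\pi) = M_1(0,\pi) = \pi$ (both sides collapse to $\pi$ when $\theta=0$, since the weight function $\weightFun_{0,\pi}\equiv \pi$) to localize the empirical process at $\theta=0$, yielding a bound that scales with $\|\theta\|_2$.

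The second step is an induction on $t$, conditioned on the event above. The base case $t=0$ is the initialization~\eqref{eq:intial_cond_prop}. For the inductive step, under $\pi^t \leq \bar\pi$ we have $1 - 2\pi^t \geq \bar\rho$, so Lemma~\ref{lemma:contraction_pop} yields $\|M_2(\theta^t,\pi^t)\|_2 \leq (1-\bar\rho^2/2)\|\theta^t\|_2$ and $|M_1(\theta^t,\pi^t) - \pi^t| \leq (1-c\bar\rho^2)\|\theta^t\|_2/2$. The location bound in~\eqref{EqnUnbalancedSample_unknown} follows by combining the first inequality with the perturbation bound via the triangle inequality and unfolding the resulting linear recursion, exactly as in the derivation of~\eqref{EqnUnbalancedSample}. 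For the weight, telescoping $\pi^{t+1} - \pi^0 = \sum_{s=0}^t (\pi^{s+1}-\pi^s)$ and applying the triangle inequality together with the $M_1$-contraction and the localized perturbation gives $\pi^{t+1} \leq \pi^0 + C\sum_{s=0}^t \|\theta^s\|_2$. The geometric decay of $\|\theta^s\|_2$ makes this sum $O(\|\theta^0\|_2/\bar\rho^2)$, and the initialization condition~\eqref{eq:intial_cond_prop} is precisely tailored so that $\pi^0 + O(\|\theta^0\|_2/\bar\rho^2) \leq \bar\pi$, closing the induction.

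The main obstacle is keeping the weight invariant $\pi^t \leq \bar\pi$ intact for all $t$. A naive perturbation bound of order $\sqrt{d/n}$ would give a drift that accumulates linearly in $t$ and eventually exceeds the slack $\bar\pi - \pi^0$; this is why the exact identity $M_{1,n}(0,\pi) = M_1(0,\pi) = \pi$ is essential---it forces the empirical process to vanish at $\theta=0$ and thus scale with $\|\theta^t\|_2$, whose infinite sum is controlled by the geometric rate $(1-\bar\rho^2/2)^t$ in the contraction phase. The sample size lower bound $n \gtrsim d\log(1/\delta)\sigma^2/\bar\rho^4$ then ensures the noise-floor contribution $\epsilon_n/(\bar\rho^2/2)$ is small relative to the initial slack, which is what makes the joint induction go through.
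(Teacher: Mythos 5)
Your overall route is the one the paper intends: the paper omits this proof entirely, saying only that it is ``straightforward given the proof of Theorem~\ref{ThmUnbalanced},'' i.e.\ couple the contraction bounds of Lemma~\ref{lemma:contraction_pop} with a uniform deviation bound in the style of Lemma~\ref{lemma:bound_EM_operators} and run the recursion. Your observation that $\weightFun_{0,\weight}\equiv\weight$, hence $\updateM_{1,\obs}(0,\weight)=\updateM_{1}(0,\weight)=\weight$ exactly, so that the empirical process for the weight component can be localized to scale with $\enorm{\theta}$, is correct and is genuinely needed — a crude $\sqrt{d/n}$ per-step perturbation of the weight would destroy the invariant $\weight^t\leq\overline{\weight}$ after $O(\sqrt{n/d})$ steps. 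The handling of the extra parameter $\weight$ by a further net, and the closing of the $\theta$-recursion exactly as in the proof of~\eqref{EqnUnbalancedSample}, are both fine.

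There is, however, one step that does not close as written: the claim that $\sum_{s\le t}\enorm{\theta^s}=O(\enorm{\theta^0}/\overline{\rho}^{\,2})$. That bound holds for the \emph{population} iterates (and is exactly what condition~\eqref{eq:intial_cond_prop} is calibrated against), but the sample recursion is $\enorm{\theta^{s+1}}\le(1-\overline{\rho}^{\,2}/2)\enorm{\theta^s}+\varepsilon_n(\delta)$, whose iterates plateau at the noise floor $\asymp\varepsilon_n(\delta)/\overline{\rho}^{\,2}$ rather than decaying to zero. Hence $\sum_{s\le t}\enorm{\theta^s}\lesssim \enorm{\theta^0}/\overline{\rho}^{\,2}+t\,\varepsilon_n(\delta)/\overline{\rho}^{\,2}$, and the telescoped drift of $\weight^t$ grows linearly in $t$ once the plateau is reached; your induction therefore only preserves $\weight^t\le\overline{\weight}$ for $t\lesssim\overline{\rho}^{\,2}(\overline{\weight}-\weight^0)/\varepsilon_n(\delta)\asymp\sqrt{n/d}$ iterations, not for all $t$ as the corollary asserts. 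To repair this you must either (i) restrict the horizon to the $O(\log(n/d)/\overline{\rho}^{\,2})$ iterations actually needed to reach the noise floor and state the conclusion there, or (ii) exploit that the weight drift is in fact second order near the origin — by the symmetry of $\NORMAL(0,\sd^2 I_d)$ the linear term in $\updateM_1(\theta,\weight)-\weight$ cancels, so the drift is $O(\enorm{\theta}^2)$ — together with a correspondingly localized quadratic empirical bound, which buys a much longer (though still not infinite) horizon. Either way, an explicit statement of the admissible range of $t$ is missing from the argument.
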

\noindent The proof is fairly straightforward given the proof of 
Theorem~\ref{ThmUnbalanced} and is thereby omitted. However, it remains to 
prove Lemma~\ref{lemma:contraction_pop}.

\paragraph{Proof of Lemma~\ref{lemma:contraction_pop}} 

The upper bound for $\enorm{ \updateM_{2} (\mean, \weight)}$ follows
directly from the proof of Theorem~\ref{ThmUnbalanced}.  Turning to
the other bound in equation~\eqref{eq:contract_bound_unknown}, we see
that
\begin{align*}
  \abss{ \updateM_{1} (\theta, \weight) - \weight} 
  & = \weight (1 - \weight) \abss{ \Exs_{X} 
  \brackets{ \dfrac{ \exp \parenth{ X^{ \top} \mean} - 
  \exp \parenth{ - X^{ \top} \mean}}{\weight 
  \exp \parenth{ X^{ \top} \mean} + (1 - \weight) 
  \exp \parenth{ - X^{ \top} \mean}}}} \\
  & \leq 2 \weight (1 - \weight) \enorm{ \mean} 
  \max_{u \in [0, 1]} \opnorm{\Exs \left[\bar{\Gamma}_{\theta_u}(X) \right]},
\end{align*}
where $\mean_{u} = u \mean$ for $u \in [0, 1]$ and the matrix $\bar{\Gamma}_{\theta_u}(X)$ is defined as
\begin{align}
  \bar{\Gamma}_{\theta_u}(X) 
  \defn \frac{X}{\sigma^{2} \parenth{
    \weight \exp \left(- \frac{\theta_{u}^\top X}{\sigma^{2}}\right) +
    (1 - \weight) \exp \left(\frac{\theta_{u}^\top
      X}{\sigma^{2}} \right)}^{2}}.
\end{align}
Invoking the transformation as that from the proof of Theorem~\ref{ThmUnbalanced} 
and mimicking the arguments presented there, we can verify that
\begin{align*}
  \abss{ \Exs_{X} \brackets{ \bar{\Gamma}_{\theta_u}(X)}} 
  & = \abss{ \Exs_{V_{1}} \brackets{\dfrac{V_{1}}{\sigma 
  \parenth{\weight \exp \left(- \enorm{ \theta_{u}} 
  V_{1}/ \sigma \right) +
    (1 - \weight) \exp \left( \enorm{\theta_{u}}
    V_{1}/ \sigma \right)}^{2}}}} \\
    & \leq \Exs_{ V_{1}} \brackets{\dfrac{ \abss{ V_{1}} }
    {\sigma \parenth{\weight \exp \left(- \enorm{ \theta_{u}} 
    V_{1}/ \sigma \right) +
    (1 - \weight) \exp \left( \enorm{\theta_{u}}
    V_{1}/ \sigma \right)}^{2}}} \\
    & \leq \dfrac{(1 - \contracasym^2) 
    + \contracasym^2 \Exs_{V_{1}} \brackets{ 
    \abss{V_{1}} \mathbb{I}( V_1 \geq 0)}} {(1 - \contracasym^2)} \\
    & = \frac{1 - c \contracasym^2}{(1 - \contracasym^2)}
\end{align*}
where $V_{1} \sim \NORMAL(0, 1)$ and $c = 1 - \Exs_{V_{1}} \brackets{ \abss{V_
{1}} \mathbb{I}( 
V_1 \geq 0)} \in (1/ 2, 1)$.
Putting the above results together yields the claimed bound.

\section{Unbalanced vs balanced fits: Closer look at log-likelihood} 
\label{sec:unbalanced_vs_balanced_fits_closer_look_at_log_likelihood}

In this appendix, we provide a further discussion on the difference
between the unbalanced and balanced mixtures corresponding to the
model~ \eqref{EqModelFit} considered throughout our work.  Recall that
the expected (population) log-likelihood for the model
fit~\eqref{EqModelFit} is given by
\begin{align*}
  \mathcal{L}^\weight(\mean) 
  & = \Exs \brackets{\log \parenth{ \weight
      \normDensity\parenth{X;\mean, \sd^2 I_{d}} + (1 -
      \weight)\normDensity\parenth{X;-\mean,\sd^2 I_{d}}}},
\end{align*}
where $\normDensity(\cdot;\theta,\sd^2I_{\dims})$ denotes the
probability density of the Gaussian distribution $\NORMAL(\theta, \sd^2I_\dims)$.
Observe that
\begin{align*}
    \arg\max_{\theta} \mathcal{L}^\weight(\mean) = \arg\min_{\theta}
    \text{KL}(\NORMAL(0, \sd^2 I_d) \| \weight\NORMAL(\theta, \sd^2 I_d)
    +(1-\weight)\NORMAL(-\theta, \sd^2 I_d)),
\end{align*}
where $\text{KL}(P\| Q)$ denotes the Kullback-Leibler divergence between
the distributions $P$ and $Q$. Since the true distribution belongs to the
fitted class with $\thetastar=0$, finding maximizer of the
population log-likelihood would yield the true parameter $\thetastar$. 
As alluded to in the main paper, in
practical situations, when one has
access to only $n$ i.i.d. samples $\{X_i \}_{i=1}^n$, the most popular
choice to estimate $\thetastar$ is the maximum likelihood estimate
(MLE) given by equation~\eqref{eq:sample_likelihood}.

We now use the nature of log-likelihood to justify the difference
between unbalanced and balanced fits.
Note that the Fisher information matrix 
$\Fishermatrix^\weight(\theta) \defn -\nabla^2_\theta \mathcal{L}^\weight
(\mean)$ for the fit~\eqref{EqModelFit} with mixture weights $ (\weight,
1-\weight)$ is given by
\begin{align*}
[\Fishermatrix^\weight(\theta)]_{ii} 
&= - 4 \weight (1 - \weight) \Exs \brackets{\frac{Y_{i}^2}{(\weight \exp( \mean^{\top} Y) + (1 - \weight) \exp(- \mean^{\top}
    Y))^2}} + 1
\end{align*} 
for $i \in [d]$ and
\begin{align*}
[\Fishermatrix^\weight(\theta)]_{ij}
&= - 4 \weight (1 - \weight) \Exs \brackets{\frac{Y_{i} Y_{j}}{(\weight \exp( \mean^{\top} Y) + (1 - \weight) \exp(  -\mean^{\top}Y))^2}}
\end{align*}
for $i, j \in [d]$ such that $i \neq j$. Here the expectations are taken
under the true model $Y = (Y_{1},\ldots,Y_{d}) \sim \Ncal(0,I_{d})$.
Clearly, at $\theta=\thetastar=0$, we have
\begin{align}
\label{eq:fisher}
\Fishermatrix^\weight(\thetastar)= \beta^\weight I_\dims, \quad \text{where}
\quad \beta^\weight = - 4\weight (1 - \weight) + 1.
\end{align}
Note that  $\beta^\weight > 0$ for any $\weight \in (0, 1)$ such that $\weight
\neq 1/2$. On the other hand, for $\weight=1/2$, we have $\beta^{\weight} =
0$.
Consequently, we find that for any unbalanced fit with $\weight\neq 1/2$,
the Fisher matrix is positive definite at $\thetastar$, and, for the balanced
fit with $\weight=1/2$, it is singular at $\thetastar$.
Equivalently, the log-likelihood is strongly log-concave around $\thetastar$
for the unbalanced fit and weakly log-concave for the balanced fit.

\begin{figure}[h]
  \begin{center}
    \begin{tabular}{cc}
      \widgraph{0.45\textwidth}{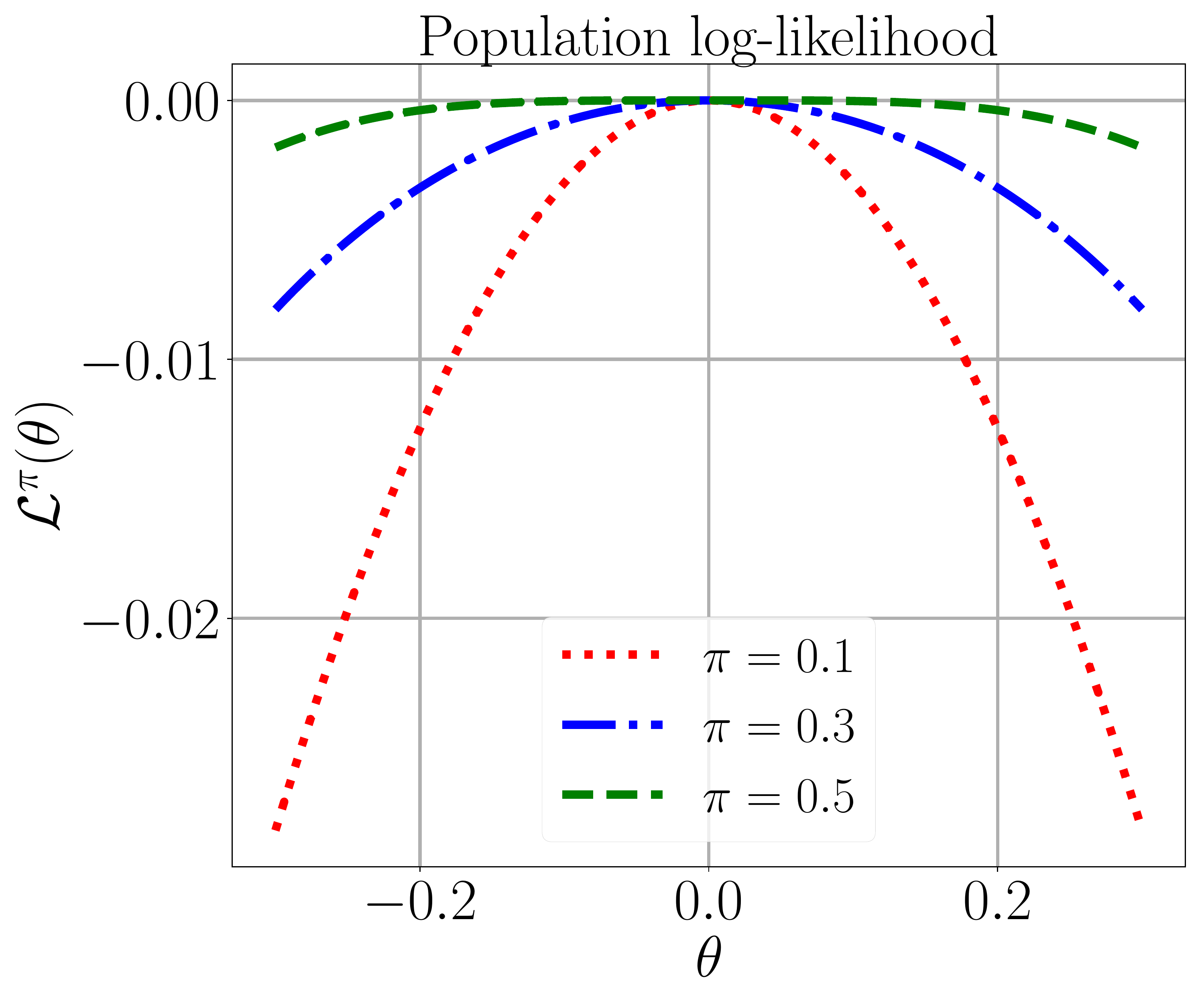} 
      & \widgraph{0.45\textwidth}{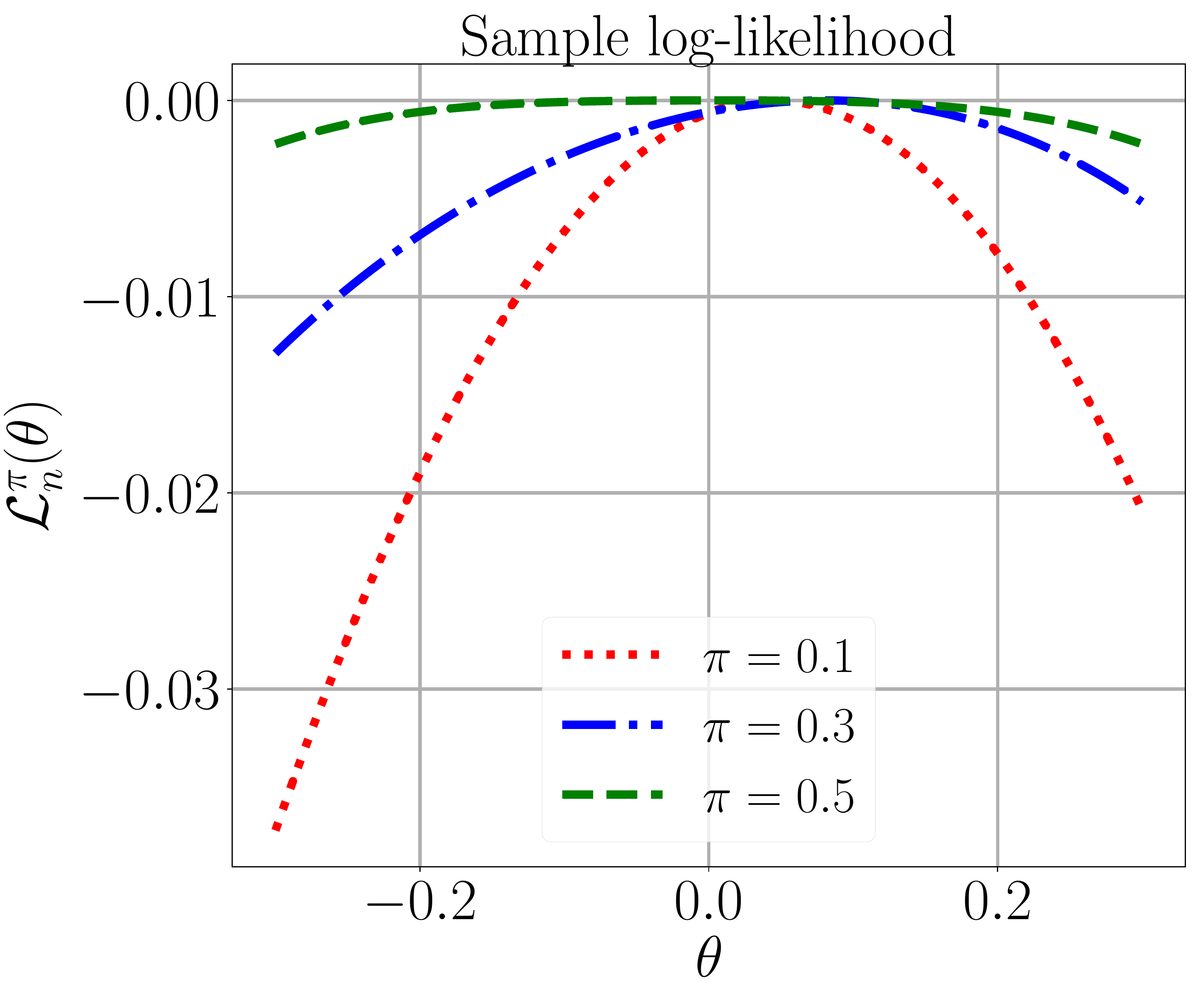} 
      \\
      (a) & (b)
    \end{tabular}
    \caption{Plots of the log-likelihood for the unbalanced and balanced fit
      for
      data generated from $\NORMAL(0, 1)$. 
      (a) Behavior of population log-likelihood 
      $\mathcal{L}^\weight$~\eqref{eq:pop_likelihood} (computed using
      numerical integration) as a function of $\theta$ for different weights
      $\weight \in \braces{0.1, 0.3, 0.5}$. 
      (b) Behavior of sample log-likelihood
      $\mathcal{L}_n^\weight$~\eqref{eq:sample_likelihood}
      with $n=1000$ samples for $\weight \in \braces{0.1, 0.3, 0.5}$.
      The plots in these panels portray a stark contrast in the shapes of the
      log-likelihood functions in the balanced and unbalanced case, it
      gets flatter around $\thetastar=0$ as $\weight \to 0.5$. 
      More concretely, in unbalanced case we see a quadratic type behavior
      (strongly concave); whereas in balanced case, the log-likelihood 
      function is flatter and depicts a fourth degree polynomial type (weakly
      concave) behavior.
    }
    \label{fig:pop_likelihood}
  \end{center}
\end{figure}

We numerically computed the population log-likelihood and plotted it in
Figure~\ref{fig:pop_likelihood}(a)\footnote{Figure~\ref{fig:pop_likelihood}(b),
  shows the sample likelihoods $\mathcal{L}_n^\weight$ based on $n=1000$
  samples, and weights $\weight \in \braces{0.1, 0.5}$.  We observe that
  while the sample-likelihood may have more critical points, its
  curvature resembles very closely the curvature of the corresponding
  population log-likelihood.}, where we observe that when
the mixture
weights are unbalanced ($\weight < 1/2$), the population
log-likelihood for the model has more curvature, and in fact is
(numerically) well-approximated as $\mathcal{L}^{\weight}(\theta)
\approx -c^\weight\theta^2$.  On the other hand, for the balanced
model with $\weight = \frac{1}{2}$, the likelihood is quite flat near
origin and is (numerically) well-approximated as
$\mathcal{L}^{\weight}(\theta) \approx -c\, \theta^4$. 
It is a folklore that the convergence rate
of optimization methods has a phase transition: optimizing
strongly concave functions is exponentially fast than weakly concave
functions.  As a result, we might expect why population
EM may have fundamentally different rate of algorithmic convergence in the
two model
fits as observed in Figure~\ref{fig:population_em} (in the main paper).

Moreover, the singularity of Fisher matrix is known to lead to a slow down
the statistical rate of MLE. It is well
established~\cite{vanderVaart-98} that when the Fisher matrix is invertible
in a neighborhood of the true parameter, MLE has the parametric rate of
$n^{-\frac1 2}$, i.e., the MLE estimate is at a distance of order $n^{-\frac
12}$ from the true parameter $\thetastar$. Moreover, as discussed in the
introduction of the main paper, several works~\cite{Chen1992,Nguyen-13}
have also shown than the singularity of the Fisher matrix may lead to a
slower than $n^{-\frac 12}$ rate for the MLE. 
Since EM algorithm is designed to estimate MLE (and converges only to local
maxima), we may loosely conclude
that, for the singular case (balanced fit), a slower
than parametric rate for the EM estimate is also expected.


\section{Mixture of regression}
\label{sec:mixture_regression_EM}
In this appendix, we provide formal results for the slow convergence of
EM for over-specified mixture of linear regression 
(as discussed in Section~\ref{sub:slow_rates_for_mixture_of_regressions}).

Given $n$ samples from the mixture of regressions model~\eqref{eq:truemodel_regression},
we use EM to fit the following model:
\begin{align}
\label{eq:fitted_regression}
  Y|X \sim \frac{1}{2}\NORMAL(\theta\tp X, 1)
  +\frac{1}{2}\NORMAL(-\theta\tp X, 1),
\end{align}
where we assume the knowledge of covariate design $X\sim\NORMAL(0, I_d)$.
Given this joint model on $(X, Y)$ the population log-likelihood for the
model is given by
\begin{align*}
  \mathcal{L}(\theta) = \Exs_{X, Y} \brackets{\log \parenth{ \weight
      \normDensity\parenth{Y;\theta\tp X, \sd^2 I_{d}} + (1 -
      \weight)\normDensity\parenth{Y;-\theta\tp X,\sd^2 I_{d}}}},
\end{align*}
where $\normDensity(\cdot;\theta,\sd^2I_{\dims})$ denotes the
probability density of the Gaussian distribution $\NORMAL(\theta, \sd^2I_\dims)$.
In Figure~\ref{fig:mix_of_reg}, we plot this log-likelihood as a function
of $\theta$ for two different values of $\thetastar$ and observe the following.
When the mixture has strong
signal ($\theta^* = 0.7$),
the Hessian of log-likelihood is negative definite (strongly concave) but
in the case of no signal $\theta^* = 0$ the Hessian degenerates at $\theta^*$
and the log-likelihood becomes weakly concave. 
\begin{figure}[h]
  \begin{center}
    \begin{tabular}{c}
    \widgraph{0.65\textwidth}{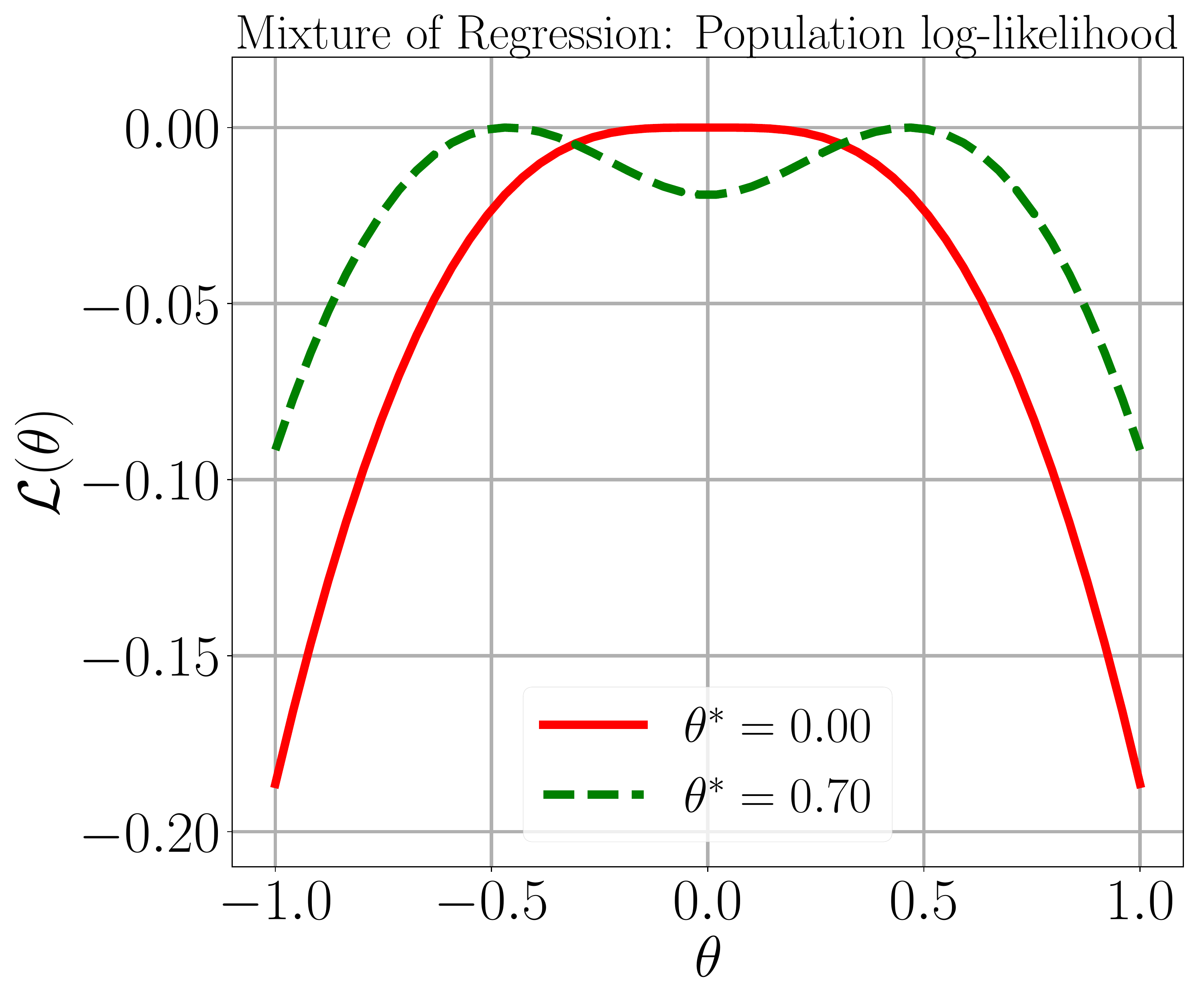} 
    \end{tabular}
    \caption{Plots of the population log-likelihood for the mixture of 
    regression model for $\theta^*\in\braces{0, 0.7}$. We see that while
    the log-likelihood is clearly locally strongly concave around $\theta^*$
    when $\theta^* = 0.7$, and it is rather flat (and weakly concave) for
    the case of no signal $\theta^*=0$. This flatness in log-likelihood
    results in a slower rate of algorithmic and statistical convergence
    of EM in this setting thereby providing further evidence of the usefulness
    of our analysis of EM.
}
\label{fig:mix_of_reg}
  \end{center}
\end{figure}

The behavior observed in Figure~\ref{fig:mix_of_reg} is reminiscent
of the behavior of log-likelihood in the case of over-specified Gaussian
mixtures considered in the main paper (see Appendix~\ref{sec:unbalanced_vs_balanced_fits_closer_look_at_log_likelihood}
and Figure~\ref{fig:pop_likelihood}). We now show that such a similarity
also implies a similar behavior for EM, which converges slowly on both 
algorithmic and statistical fronts (just like the over-specified Gaussian
mixture case) for the fit~\eqref{eq:fitted_regression}.

Given this model, the sample EM operator $\updateMreg_{\obs}:
\realdim \mapsto \realdim$ takes the form
\begin{align}
\label{eq:sample_em_operator_regress}
\updateMreg_\obs(\theta) & \defn \parenth{ \sum_{i = 1}^{n} X_{i} X_{i}^{\top} }^{- 1} \parenth{ \frac{1}{\obs} \sum_{i=1}^n (2
\weightFun_\theta(X_i, Y_{i}) - 1) X_i Y_{i}}
\end{align}
where we define
\begin{align}
  \weightFun_\theta(x, y) \mydefn \frac{\weight \exp
    \parenth{-\frac{\parenth{y - \mean^{\top} x}^2}{2}}}{\weight \exp
    \parenth{- \frac{\parenth{y - \mean^{\top} x}^2}{2}} + (1-\weight) \exp
    \parenth{-\frac{\parenth{y + \mean^{\top} x}^2}{2}}}.
\end{align}
Consequently, the population EM operator
$\updateMreg:
\realdim \mapsto \realdim$ is given by
\begin{align}
\label{eq:pop_EM_mix_regress}
  \updateMreg (\theta) & \mydefn \Exs_{(Y, X)} \brackets{ (2 \weightFun_\theta(X, Y) - 1) X Y},
\end{align}
where the outer expectation is taken with respect to $X \sim \NORMAL(0,
I_{d})$ and $Y|X \sim \NORMAL((\theta^*)^{ \top} X, 1)$ ($= \NORMAL(0,
I_{d})$ when $\thetastar=0$).
Given these notation, we now characterize the slow convergence of the 
population EM operator:
\begin{lemma}
\label{lemma:mix_regress_pop}
Given the balanced model fit~\eqref{eq:fitted_regression} to the 
true model~\eqref{eq:truemodel_regression} with $\thetastar=0$, the population
EM operator $\updateMreg$~\eqref{eq:pop_EM_mix_regress} 
satisfies the following bounds
\begin{align}
  \enorm{ \mean} (1 - 3 \enorm{ \mean}^2) 
  \leq \enorm{ \updateMreg ( \mean)} \leq \enorm{ \mean} (1 
  - 2 \enorm{ \mean}^2)
\end{align}
for any $\mean \in \Rspace^{d}$ such that $\enorm{ \mean} \leq 1/ 2$. 
\end{lemma}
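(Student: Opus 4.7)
The plan is to reduce the multivariate computation to a scalar integral, and then combine Taylor-type inequalities for $\tanh$ with explicit Gaussian moment computations. For the balanced fit, direct calculation gives $2 \weightFun_\theta(X,Y) - 1 = \tanh(Y \theta^\top X)$, so $\updateMreg(\theta) = \Exs[XY\tanh(Y\theta^\top X)]$. I would choose an orthogonal matrix $R \in \real^{d \times d}$ with $R\theta = \rho e_1$ for $\rho := \enorm{\theta}$, and set $V := RX \sim \NORMAL(0, I_d)$. Then $\updateMreg(\theta) = R^\top \Exs[VY\tanh(\rho V_1 Y)]$, and because each $V_j$ for $j \neq 1$ is independent of $(V_1, Y)$ and centered, those coordinates vanish. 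This yields $\enorm{\updateMreg(\theta)} = g(\rho)$, where
\[
g(\rho) := \Exs[V_1 Y \tanh(\rho V_1 Y)]
\]
with $V_1, Y$ i.i.d.\ standard normal. Writing $W := V_1 Y$, the integrand $W\tanh(\rho W)$ is non-negative (since $W$ and $\tanh(\rho W)$ share sign), and the moments satisfy $\Exs[W^{2k}] = ((2k-1)!!)^2$ by independence.

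For the lower bound, I would first establish $\tanh(u) \geq u - u^3/3$ for $u \geq 0$. The function $f(u) := \tanh(u) - u + u^3/3$ satisfies $f(0) = 0$ and $f'(u) = \sech^2(u) - 1 + u^2 = u^2 - \tanh^2(u) \geq 0$, since $|\tanh u| \leq |u|$. By odd symmetry of both $\tanh$ and $u - u^3/3$, multiplying by $u$ gives $u\tanh(u) \geq u^2 - u^4/3$ for all $u \in \real$. Substituting $u = \rho W$ and taking expectation with $\Exs[W^2] = 1$ and $\Exs[W^4] = 9$ then yields $g(\rho) \geq \rho - 3\rho^3 = \rho(1 - 3\rho^2)$, as required.

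The upper bound is the main obstacle. A parallel strategy establishes $\tanh(u) \leq u - u^3/3 + 2u^5/15$ for $u \geq 0$ via the inequality $\sech^2(v) \leq 1 - v^2 + 2v^4/3$ (which follows by integrating $\tanh^2(v) \geq (v - v^3/3)^2$ for $v \in [0, \sqrt{3}]$ and extending trivially for $v \geq \sqrt{3/2}$). Unfortunately, after extending by odd symmetry and taking expectations, this only yields $g(\rho) \leq \rho - 3\rho^3 + 30\rho^5$, which fails to imply $g(\rho) \leq \rho(1 - 2\rho^2)$ once $\rho > 1/\sqrt{45}$. The issue is structural: $\Exs[W^6] = 225$ is large because the product distribution $V_1 Y$ has sub-exponential (heavy) tails, and $\tanh$ saturates to $\pm 1$ for large arguments, so any uniform polynomial upper bound overshoots badly in the tail region.

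To close the gap on the full range $\rho \in (0, 1/2]$, I would deploy a truncation argument: split the expectation according to whether $|\rho W| \leq c$ for a threshold $c$ of order one. On the bulk event, use the polynomial bound $W\tanh(\rho W) \leq \rho W^2 - \rho^3 W^4/3 + 2\rho^5 W^6/15$; on the tail event, use the cruder $\tanh(\rho W) \leq 1$, so that $W\tanh(\rho W) \leq |W|$, and control $\Exs[|W|\, \mathbb{I}(|W| > c/\rho)]$ via standard sub-exponential tail estimates on $V_1$ and $Y$. Careful balancing of $c$ against $\rho$, combined with the restriction $\rho \leq 1/2$, should allow the polynomial contribution and the tail truncation to combine into the claimed $\rho(1 - 2\rho^2)$ bound. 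The main technical difficulty lies precisely here: verifying that the constant in the $\rho^3$ correction indeed sharpens to $2$ rather than a looser value such as $3$, and that the $\rho^5$ polynomial residual is absorbed by the $\rho^3$ term once the heavy-tail overshoot is trimmed off.
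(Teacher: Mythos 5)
Your reduction and your lower bound coincide exactly with the paper's own argument: the paper also rotates so that $R\theta=\enorm{\theta}e_1$, reduces to $\enorm{\updateMreg(\theta)}=\Exs\brackets{V_1Y\tanh(\rho V_1Y)}$ with $\rho=\enorm{\theta}$, and uses $u\tanh(u)\geq u^2-u^4/3$ together with $\Exs[(V_1Y)^4]=9$ to get $\rho(1-3\rho^2)$. For the upper bound the paper does precisely what you describe --- it invokes $u\tanh(u)\leq u^2-u^4/3+2u^6/15$ and $\Exs[(V_1Y)^6]=225$ to obtain $\rho-3\rho^3+30\rho^5$ --- and then simply asserts that this is at most $\rho(1-2\rho^2)$. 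As you correctly observe, that last inequality requires $30\rho^5\leq\rho^3$, i.e.\ $\rho\leq 1/\sqrt{30}\approx 0.18$ (your threshold $1/\sqrt{45}$ is a small arithmetic slip), so the published proof itself does not cover the full range $\rho\leq 1/2$. You have put your finger on the genuine weak point of the argument.

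Where your proposal goes astray is in hoping that a truncation argument can rescue the constant $2$ on all of $(0,1/2]$: it cannot, because the stated upper bound appears to be false near the endpoint. Writing $\Exs[W\tanh(\rho W)]=\rho\,\Exs_Y\big[Y^2\,\Exs_{V_1}\sech^2(\rho YV_1)\big]$ via Stein's identity and evaluating numerically at $\rho=1/2$ gives a value of roughly $0.35$, well above the claimed $\rho(1-2\rho^2)=0.25$; the sub-exponential tails of $W=V_1Y$ that you flag are exactly what lifts the true value above $\rho-2\rho^3$ once $\rho$ is of order $0.2$--$0.3$. So the obstruction is not a loose polynomial bound to be trimmed but the statement itself at that radius. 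The correct repair is to shrink the radius in the lemma to $\enorm{\theta}\leq 1/\sqrt{30}$ (or any smaller constant), for which your polynomial computation already closes the proof; this is harmless for Corollary~\ref{cor:mix_regression_rate}, since the annulus-based localization only exploits the contraction for small $\enorm{\theta}$ and the initialization radius can be adjusted accordingly. Your lower-bound derivation, including the monotonicity proof of $\tanh(u)\geq u-u^3/3$ via $f'(u)=u^2-\tanh^2(u)\geq 0$, is complete and correct as written.
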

\noindent Proof is deferred to the end of this appendix.\\

We note that the assumption $\enorm{ \mean}  \leq 1/ 2$ is a convenient
technical assumption and is possibly loose in a similar manner as noted
in Lemma~\ref{lemma:bound_initialization} for the Gaussian mixture case.
Applying the localization argument in the paper in conjunction with the
sub-geometric convergence of the population EM (Lemma~\ref{lemma:mix_regress_pop}) 
yields the slow statistical convergence (of order $(d/n)^{\frac 14}$)
of the sample EM:
\begin{corollary}
\label{cor:mix_regression_rate}
Consider the over-specified model fit~\eqref{eq:fitted_regression} to the
true model~\eqref{eq:truemodel_regression} with $\thetastar=0$,
and initialize the sample EM sequence $\mean^{t + 1} = \updateMreg_{\obs}( \mean^{t})$
with a $\theta^0$ such that $\enorm{ \theta^0} \leq \frac12$. 
Then, for any $\varepsilon \in (0, 1/ 4)$, $\delta \in (0, 1)$, given a
large sample size  $n \geq c'_{1} d \log( \log(1 / \varepsilon)/ \delta)$,
the sample EM updates  satisfy
\begin{align*}
  \enorm{ \mean^{t}} \leq \brackets{ \enorm{\theta^{0}} 
  \prod_{j = 0}^{t - 1} \parenth{ 1 - 2 \enorm{ \mean^{j}}^2}} 
  + \sqrt{2}\parenth{\frac{
    (\dims + \log \frac{\log(4 /
        \smallthreshold)}{\delta})}{\obs} }^{\frac{1}{4} - \smallthreshold},
\end{align*}
 for any iterate $t \geq c'_{2} \parenth{\frac{n}{d}}^{\frac{1}{2} 
- 2 \varepsilon} \log (n / d) \log (1 / \varepsilon)$, 
with probability at least $1 - \delta$. Here, $c'_{1}$ and $c'_{2}$ denote
universal constants.
\end{corollary}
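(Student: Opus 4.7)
The plan is to directly port the argument used in the proof of Theorem~\ref{theorem:convergence_rate_sample_EM} to the mixture-of-regressions setting. The two essential ingredients are (i) the sub-geometric contraction of the population operator $\updateMreg$ from Lemma~\ref{lemma:mix_regress_pop}, which for small $\enorm{\theta}$ takes the form $\enorm{\updateMreg(\theta)} \leq (1 - 2\enorm{\theta}^2)\enorm{\theta}$, mirroring the bound $\gamup(\theta) \asymp 1 - \enorm{\theta}^2/\sd^2$ of Theorem~\ref{thm:pop_over}; and (ii) a uniform perturbation bound between $\updateMreg_\obs$ and $\updateMreg$ on Euclidean balls, analogous to Lemma~\ref{lemma:bound_EM_operators}. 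Once these are in hand, the annulus-based localization scheme of Section~\ref{SecLocalize}, driven by the same sequence $\seqalpha{0}=0$, $\seqalpha{\ind+1}=\seqalpha{\ind}/3 + 1/6$, produces the claimed rate.

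The first step I would carry out is the perturbation bound: for any $r>0$, $\delta\in(0,1)$, and $\obs \gtrsim d\log(1/\delta)$,
\begin{align*}
\sup_{\enorm{\theta}\leq r}\enorm{\updateMreg_\obs(\theta)-\updateMreg(\theta)} \;\lesssim\; (r+1)\sqrt{\frac{d+\log(1/\delta)}{\obs}}
\end{align*}
with probability at least $1-\delta$. I would derive this by decomposing
\begin{align*}
\updateMreg_\obs(\theta)-\updateMreg(\theta) = \widehat{\Sigma}_n^{-1}\bigl[S_\obs(\theta)-S(\theta)\bigr] + \bigl[\widehat{\Sigma}_n^{-1}-I_d\bigr]S(\theta),
\end{align*}
where $\widehat{\Sigma}_\obs = \tfrac{1}{\obs}\sum X_iX_i^{\top}$, $S_\obs(\theta)=\tfrac{1}{\obs}\sum (2\weightFun_\theta(X_i,Y_i)-1)X_iY_i$, and $S(\theta) = \Exs[(2\weightFun_\theta(X,Y)-1)XY]$. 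Standard Gaussian matrix concentration bounds $\opnorm{\widehat{\Sigma}_\obs^{-1}-I_d}\lesssim\sqrt{d/\obs}$; the supremum of $\enorm{S_\obs(\theta)-S(\theta)}$ is controlled by mimicking Appendix~\ref{sec:proof_of_lemma_bound_em_operator}, namely symmetrization followed by Ledoux--Talagrand contraction (exploiting that $\theta \mapsto 2\weightFun_\theta(X,Y)-1$ is $2|Y|/\sd^2$-Lipschitz in $\theta^{\top}X$ and vanishes at $\theta = 0$), a discretization of $\sphere^{d-1}$, and sub-exponential tail bounds for the random quantities $(X_i^{\top}\myvec^k)(X_i^{\top}\myvec^j) Y_i$, which are products of independent centered Gaussians under $\thetastar=0$.

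With this perturbation bound in hand, the remainder of the proof is structurally identical to Appendix~\ref{sub:proof_of_theorem_thmsampleoverfit}. I would (a) apply a union bound over the $\imax = \lceil\log(4/\smallthreshold)/\log 3\rceil + 1$ relevant radii from the set $\radii$ to build a single good event analogous to $\event(\obs,\dims,\smallthreshold,\threshold)$; (b) establish the analogue of Lemma~\ref{LemNonExpansive}, showing that once $\enorm{\theta^t}\leq\sqrt{2}(\dims/\obs)^{\seqalpha{\ind}}$ the iterates remain inside that ball, by combining the population contraction from Lemma~\ref{lemma:mix_regress_pop} with the perturbation bound; and (c) induct on epochs: within epoch $\ind$, where $\enorm{\theta^t}\in[(\dims/\obs)^{\seqalpha{\ind+1}},(\dims/\obs)^{\seqalpha{\ind}}]$, Lemma~\ref{lemma:mix_regress_pop} gives contraction factor at most $1-2(\dims/\obs)^{2\seqalpha{\ind+1}}$ while the perturbation is of order $(\dims/\obs)^{\seqalpha{\ind}+1/2}$, so unfolding the triangle inequality $\enorm{\theta^{t+1}}\leq\enorm{\updateMreg_\obs(\theta^t)-\updateMreg(\theta^t)}+\enorm{\updateMreg(\theta^t)}$ over roughly $(\dims/\obs)^{-2\seqalpha{\ind+1}}\log(\obs/\dims)$ steps drives the iterate into epoch $\ind+1$. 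Summing these epoch lengths yields the stated iteration count of order $(\obs/\dims)^{1/2-2\smallthreshold}\log(\obs/\dims)\log(1/\smallthreshold)$, and the recursion $3\seqalpha{\ind+1} = \seqalpha{\ind} + 1/2$ drives $\seqalpha{\ind}$ to $1/4-\smallthreshold$.

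The main obstacle will be the perturbation lemma: compared to the mixture-of-Gaussians case, the summands carry an extra factor of $Y_i$, so the quantities one must control concentrate only at sub-exponential rather than sub-Gaussian rate, and one must carefully track this when adapting the Rademacher complexity argument and the key tail bound~\eqref{eqn:sub_exp_bound}. A secondary issue is preserving the condition $\enorm{\theta^t}\leq 1/2$ along the whole trajectory, which is needed to legally invoke Lemma~\ref{lemma:mix_regress_pop}; this is delivered by the non-expansivity step, since the initial epoch $\seqalpha{0}=0$ already forces $\enorm{\theta^t}\leq\sqrt{2}$ (hence $\leq 1/2$ after a short burn-in, for $\obs$ large enough), and all later annuli are strictly nested inside.
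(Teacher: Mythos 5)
Your proposal is correct and follows essentially the same route as the paper, which in fact omits the proof of this corollary entirely, stating only that it ``follows the same annulus-based localization road-map'' as Theorem~\ref{theorem:convergence_rate_sample_EM} with Lemma~\ref{lemma:mix_regress_pop} playing the role of Theorem~\ref{thm:pop_over}. You go somewhat further than the paper by actually sketching the required analogue of Lemma~\ref{lemma:bound_EM_operators} (including the decomposition through $\widehat{\Sigma}_n^{-1}$) and correctly flagging its heavier-tailed concentration as the main technical burden; the only small imprecision there is that after contraction the relevant summands are of the form $Y_i^2 (X_i^\top u)(X_i^\top v)$ rather than $(X_i^\top u)(X_i^\top v)Y_i$, so the tails are slightly worse than sub-exponential, though this does not change the $r\sqrt{d/n}$ rate for $n$ large relative to $d$.
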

Given Lemma~\ref{lemma:mix_regress_pop}, the proof of Corollary~\ref{cor:mix_regression_rate}
follows the same annulus-based localization road-map as of the proof of
Theorem~\ref{theorem:convergence_rate_sample_EM}; and is thereby omitted.
We now prove Lemma~\ref{lemma:mix_regress_pop}.

\paragraph{Proof of Lemma~\ref{lemma:mix_regress_pop}}
We provide a proof sketch for the lemma based on an application of Taylor expansion. 
In particular, we define a transformation $V : = R X$ where 
$R$ is an orthonormal matrix such that $R \mean = \enorm{ \mean} e_{1}$ and 
$e_{1}$ denotes the first canonical basis vector in dimension $d$. 
After similar algebra as that of Theorem~\ref{thm:pop_over}, we can verify that
\begin{align*}
  \enorm{ \updateMreg ( \mean)} = \Exs_{(Y, V_{1})} 
  \brackets{ \tanh(V_{1} Y \enorm{ \mean}) V_{1} Y},
\end{align*}
where the outer expectation is taken with respect to $V_{1}, Y \sim
\NORMAL(0, 1)$ and $V_{1}$ and $Y$ are independent. 
Using arguments similar to the bounds~\eqref{eq:tanh_positive}
and \eqref{eq:tanh_negative}, we can derive that
\begin{align*}
  x^2 - \frac{x^4}{3} \leq \tanh( x) \leq x^2 - \frac{x^4}{3} +
        \frac{2 x^6}{15}
\end{align*}
for all $x \in \Rspace$. Given these bounds, we find that
\begin{align*}
  &\Exs_{(Y, V_{1})} \brackets{ \tanh(V_{1} Y 
  \enorm{ \mean}) V_{1} Y}  \\
  & \leq \Exs \brackets{(V_{1} Y)^2} \enorm{ \mean}  
  - \frac{\Exs \brackets{(V_{1} Y)^4} \enorm{ \mean}^3}{3}
  + \frac{2 \Exs \brackets{(V_{1} Y)^6} \enorm{ \mean}^5}{15} \\
  & = \enorm{ \mean} - 3 \enorm{ \mean}^3 + 30 
  \enorm{ \mean}^5 \leq \enorm {\mean} \parenth{ 1 - 2 \enorm{ \mean}^2},
\end{align*}
and
\begin{align*}
  \Exs_{(Y, V_{1})} \brackets{ \tanh(V_{1} Y \enorm{ \mean})
          V_{1} Y} & \geq \Exs \brackets{(V_{1} Y)^2} \enorm{ \mean} -
        \frac{\Exs \brackets{(V_{1} Y)^4} \enorm{ \mean}^3}{3} \\ & =
        \enorm{ \mean} - 3 \enorm{ \mean}^3 = \enorm{ \mean} \parenth{
          1 - 3 \enorm{ \mean}^2}
\end{align*}
for all $\mean \in \Rspace^{d}$ such that $\enorm{ \mean} \leq 1/ 2$.
Putting the above results together yields the lemma.

\bibliography{Nhat}

\end{document}